\newtheorem{thm}{Theorem}[section]
\newtheorem{cor}[thm]{Corollary}
\newtheorem{lem}[thm]{Lemma}
\newtheorem{prop}[thm]{Proposition}
\theoremstyle{definition}
\newtheorem{defn}[thm]{Definition}
\newtheorem{ass}[thm]{Assumption}
\theoremstyle{remark}
\newtheorem{rem}[thm]{Remark}
\numberwithin{equation}{section}
\newcommand{\Real}{\mathbb R}
\newcommand{\Natural}{\mathbb N}
\newcommand{\R}{\mathbb{R}}
\newcommand{\N}{\mathbb{N}}
\newcommand{\A}{\mathcal{A}}
\newcommand{\such}{\ | \ }
\newcommand{\nin}{n \in \Natural}
\newcommand{\prob}{\mathbb{P}}
\newcommand{\M}{\mathcal{M}}
\newcommand{\cE}{\mathcal{B}_E}
\newcommand{\FF}{\bF}
\newcommand{\PP}{\mathbb{P}}
\newcommand{\expec}{\mathbb{E}}
\newcommand{\basisp}{(\Omega, \, \bF, \, \prob)}
\newcommand{\basisgp}{(\Omega, \, \bG, \, \prob)}
\newcommand{\F}{\mathcal{F}}
\newcommand{\G}{\mathcal{G}}
\newcommand{\ud}{\mathrm d}
\newcommand{\cB}{\mathcal{B}}
\newcommand{\cO}{\mathcal{O}}
\newcommand{\cP}{\mathcal{P}}
\newcommand{\Stilde}{\widetilde{S}}
\newcommand{\Mloc}{\M_{\text{loc}}}
\newcommand{\mMloc}{\M_{\text{\emph{loc}}}}
\newcommand{\pare}[1]{\left(#1\right)}
\newcommand{\dbra}[1]{[\kern-0.15em[ #1 ]\kern-0.15em]}
\newcommand{\dbraco}[1]{[\kern-0.15em[ #1 [\kern-0.15em[}
\newcommand{\dbraoc}[1]{]\kern-0.15em] #1 ]\kern-0.15em]}
\newcommand{\dbraoo}[1]{]\kern-0.15em] #1 [\kern-0.15em[}
\newcommand{\Shat}{\widehat{S}}
\newcommand{\oF}{\widehat{\F}}
\newcommand{\bF}{\mathbf{F}}
\newcommand{\bG}{\mathbf{G}}
\newcommand{\GG}{\mathbf{G}}
\newcommand{\ind}{\mathbf{1}}
\newcommand{\limn}{\lim_{n \to \infty}}
\newcommand{\dfn}{\, := \,}
\newcommand{\be}{\begin{equation}}
\newcommand{\ee}{\end{equation}}
\newcommand{\ba}{\begin{aligned}}
\newcommand{\ea}{\end{aligned}}
\newcommand{\oFF}{\widehat{\FF}}
\newcommand{\oPP}{\widehat{\PP}}
\newcommand{\oA}{\widehat{\A}}
\newcommand{\oE}{\widehat{\expec}}
\newcommand{\ptilde}{\widetilde{\prob}}
\begin{document}

\title[The strong predictable representation property in initially enlarged filtrations]{The strong predictable representation property \\ in initially enlarged filtrations \\ under the density hypothesis}%

\author[C. Fontana]{Claudio Fontana}%
\address{Claudio Fontana, Laboratoire de Probabilit\'es et Mod\`eles Al\'eatoires, Paris Diderot University (France)}%
\email{fontana@math.univ-paris-diderot.fr}%

\thanks{The author is grateful to Huy N. Chau, Andrea Cosso and the participants to the London-Paris Bachelier Workshop on Mathematical Finance 2015 for valuable discussions on the subject of this paper as well as to the associate editor and the referees for their detailed and incisive comments that helped to improve the paper.}%
\subjclass[2010]{60G07, 60G44}
\keywords{Initial enlargement of filtration; density hypothesis; martingale representation property; hedging; insider information}%

\date{\today}%


\maketitle

\begin{abstract}
We study the strong predictable representation property in filtrations initially enlarged with a random variable $L$. We prove that the strong predictable representation property can always be transferred to the enlarged filtration as long as the classical density hypothesis of \cite{Jac85} holds. This generalizes the existing martingale representation results and does not rely on the equivalence between the conditional and the unconditional laws of $L$. 
Depending on the behavior of the density process at zero, different forms of martingale representation are established.
The results are illustrated in the context of hedging contingent claims under insider information.
\end{abstract}

\section{Introduction}

The theory of enlargement of filtrations aims at understanding the behavior of (semi-)martingales with respect to the introduction of additional information. In particular, an initial filtration enlargement corresponds to the introduction of the  information generated by some random variable $L$ to the initial $\sigma$-field $\F_0$ of a reference filtration $\FF=(\F_t)_{t\geq0}$, thus giving rise to the initially enlarged filtration $\GG=(\G_t)_{t\geq0}$. 
While the martingale property of a process is typically lost when passing from the reference filtration $\FF$ to the enlarged filtration $\GG$, the semimartingale property can be preserved under rather natural assumptions on the random variable $L$. In particular, in the seminal paper \cite{Jac85} it has been shown that every $\FF$-semimartingale is a $\GG$-semimartingale if the $\F_t$-conditional law of $L$ is absolutely continuous with respect to its unconditional law, for all $t\in\Real_+$. This condition (also called \emph{Jacod's density hypothesis}) has a prominent role in the theory of  enlargement of filtrations 
and has been widely employed in financial mathematics, notably in relation to the modeling of insider information (see e.g. \cite{GP98,AIS98,Amen,MR1831271,Bau03,Campi05,EL05,GVV06,ACJ15,AFK}).

In this paper we address the following fundamental question: suppose that every $\FF$-local martingale can be written as a stochastic integral of a given $\FF$-local martingale $S=(S_t)_{t\geq0}$, does there exist a $\GG$-local martingale $S^{\GG}=(S^{\GG}_t)_{t\geq0}$ (and, if yes, how is it related to $S$) such that every $\GG$-local martingale can be written as a stochastic integral of $S^{\GG}$? In other words, is it possible to transfer the strong predictable representation property from the original filtration $\FF$ to the initially enlarged filtration $\GG$? Assuming the validity of Jacod's density hypothesis, we shall give an answer to this question in full generality.

Since martingale representation results play a fundamental role in mathematical finance, stochastic filtering and backward stochastic differential equations, the above question has already been studied in several papers. In particular, martingale representation theorems in initially enlarged filtrations have been established first by \cite{GP98} in a Brownian setting and then by \cite{Amen,ABS,CJZ} in more general settings. However, to the best of our knowledge, the existing martingale representation results always assume a stronger version of Jacod's density hypothesis, namely the \emph{equivalence} between the $\F_t$-conditional law and the unconditional law of $L$, for all $t\in\Real_+$. In contrast, in this paper we shall only assume an \emph{absolute continuity} relation, as in the original paper \cite{Jac85}. This is a seemingly slight generalization of the existing literature, but on the contrary it requires a different approach to the martingale representation property. Moreover, it allows to study several interesting examples which are not covered by the existing results and are of special importance for the modeling of insider information.

If Jacod's density hypothesis holds as an equivalence between the conditional and the unconditional laws of $L$, as previously assumed in the literature, then the key tool is represented by an equivalent probability measure which makes the random variable $L$ independent of the original filtration $\FF$ and under which every $\FF$-martingale is also a $\GG$-martingale. 
The idea of such a measure, called {\em martingale preserving probability measure} in \cite{ABS}, goes back to early works on enlargement of filtrations and also appears in \cite{FI93}.
Together with Girsanov's theorem, this measure permits to easily move between $\FF$ and $\GG$, thus allowing to transfer the predictable representation property from $\FF$ onto $\GG$. In contrast, if Jacod's density hypothesis is only assumed to hold in the absolutely continuous form of \cite{Jac85}, then a martingale preserving probability measure may not exist (at least on the original probability space) and one has to rely on a different methodology.

Referring to Section \ref{outline} for a more detailed presentation, let us briefly describe our approach to a general martingale representation in initially enlarged filtrations, assuming the validity of Jacod's density hypothesis as stated in \cite{Jac85} and the existence of an $\FF$-local martingale having the martingale representation property in $\FF$.
First, as a preliminary step, we shall study the general structure of the initially enlarged filtration $\GG$, establishing its right-continuity and a useful characterization of $\GG$-martingales in terms of parameterized families of $\FF$-martingales. As a second step, we obtain a representation result which holds simultaneously for all the elements of a parameterized family of $\FF$-martingales. Finally, by relying on the results of \cite{SY} on stochastic integration depending on a parameter, we go back to the enlarged filtration $\GG$ in order to obtain the desired martingale representations. In this last step, a crucial ingredient is represented by the $\GG$-optional decomposition of $\FF$-local martingales recently established in \cite{ACJ15} together with the results of \cite{AFK,ACJ15} on the behavior of $\FF$-local martingales in initially enlarged filtrations.

The approach adopted in the present paper to establish the martingale representation property in initially enlarged filtrations crucially exploits Jacod's density hypothesis. However, some of the techniques used in the proofs have already appeared in the theory of enlargement of filtrations. In particular, referring to the following sections for more precise references to the literature, the results stated in Section \ref{subsec1} on the structure of $\GG$-martingales are related to similar results in \cite{CJZ,JLC09,EKJJ,GVV06}.
Moreover, even though a martingale preserving probability measure does not necessarily exist on the original probability space, we show that there exists a process playing a similar role and providing a precise link between $\FF$-martingales and $\GG$-martingales (see Remark \ref{rem:aux_repr}).
This latter fact is also related to the {\em local solution method} developed in \cite{Song_thesis}, based on the insight of constructing locally a martingale preserving probability measure on an auxiliary probability space. By combining measure changes with filtration changes, the local solution method can be shown to provide a general approach to  enlargement of filtration problems (see \cite{Song_local} for a recent account).
Since Jacod's density hypothesis can be embedded in the local solution method (see \cite[Section 6.1]{Song_local}), that method could also provide a strategy, alternative to the one adopted in the present paper, to prove the martingale representation property in initially enlarged filtrations.
The local solution method has been adopted in \cite{JS15} to establish general results on the validity of the strong predictable representation property in the case of filtrations enlarged progressively (and not initially) with respect to a non-negative random variable.
We also mention that, when $\FF$ is progressively enlarged with a non-negative random variable satisfying Jacod's density hypothesis and under the additional assumption that all $\FF$-martingale are continuous, a martingale representation result has been obtained in \cite{JLC2010}.

The paper is structured as follows. Section \ref{results} presents the probabilistic framework, the statements of the main results and two examples. In particular, we give several alternative martingale representation results depending on whether and how the conditional densities of the random variable $L$ are allowed to reach zero (see Subsection \ref{sec:main_results}). Section \ref{sec:hedging} presents an application to the hedging of contingent claims under insider information. Section \ref{proofs} contains all the proofs of the results stated in Section \ref{results} as well as several auxiliary results. Finally, the Appendix contains an alternative approach to Subsection \ref{subsec2}.

\section{Setting and main results}	\label{results}

\subsection{Notation and preliminaries}

In this paper we shall be working on several stochastic bases. Hence, we  introduce the following notation for a generic probability space $(\Omega',\mathcal{A}',\prob')$ endowed with a filtration $\FF'=(\F_t')_{t\geq0}$ satisfying the usual conditions of right-continuity and $\prob'$-completeness. We refer to \cite{MR1943877} and \cite{MR1219534} for all unexplained notions related to stochastic processes and stochastic integration.
\begin{itemize}
\item $\M(\prob',\FF')$ ($\Mloc(\prob',\FF')$, resp.) denotes the set of all martingales (local martingales, resp.) on $(\Omega',\FF',\prob')$ (in view of \cite[Remark I.1.37]{MR1943877}, we shall always assume that local martingales have c\`adl\`ag paths);
\item
if $X=(X_t)_{t\geq0}$ is an $\Real^d$-valued process in $\Mloc(\prob',\FF')$, we denote by $L_m(X;\prob',\FF')$ the set of all $\Real^d$-valued $\FF'$-predictable processes which are integrable with respect to $X$ under the measure $\prob$ in the sense of local martingales; 
\item
if $X=(X_t)_{t\geq0}$ is an $\Real^d$-valued $\FF'$-semimartingale, we denote by $L(X;\prob',\FF')$ the set of all $\Real^d$-valued $\FF'$-predictable processes which are integrable with respect to $X$ under the measure $\prob$ in the sense of semimartingales.
\end{itemize} 

Adopting the notation of \cite{MR1943877}, we denote by $(H\cdot X)_t:=\int_{(0,t]}H_u\ud X_u$ the stochastic integral of $H$ with respect to $X$, for all $t\in\Real_+$, with $(H\cdot X)_0=0$. 
We denote by $\cO(\FF')$ and by $\cP(\FF')$ the $\FF'$-optional and the $\FF'$-predictable $\sigma$-fields, respectively, on $\Omega'\times\Real_+$.

Let us recall the notion of strong predictable representation property (see \cite[Chapter IV]{MR542115} as well as \cite[Chapter 13]{MR1219534} for the one-dimensional case), here formulated with respect to an $\Real^d$-valued local martingale $X=(X_t)_{t\geq0}$ on a generic filtered probability space $(\Omega',\mathcal{A}',\FF',\prob')$. 

\begin{defn}	\label{def:PRP}
A local martingale $X=(X_t)_{t\geq0}$ is said to have the \emph{strong predictable representation property} on $(\Omega',\FF',\prob')$ if
\[
\Mloc(\prob',\FF') = \left\{\zeta + \varphi\cdot X : \zeta\in L^0(\F'_0)\text{ and }\varphi\in L_m(X;\prob',\FF')\right\},
\]
with $L^0(\F'_0)$ denoting the set of all $\F_0'$-measurable random variables.
\end{defn}

In other words, a local martingale $X$ has the strong predictable representation property on $(\Omega',\FF',\prob')$ if and only if every local martingale on that space null at zero can be written as a stochastic integral with respect to $X$. In this case, the local martingale $X$ is also said to have the \emph{(strong) martingale representation property} on $(\Omega',\FF',\prob')$.

\subsection{Setting}	\label{subsec:setting}

As the first main ingredient of our framework, we consider a probability space $(\Omega,\A,\prob)$ endowed with a filtration $\bF = (\F_t)_{t\geq0}$ satisfying the usual conditions of right-continuity and $\prob$-completeness. 
We do not necessarily assume that the initial $\sigma$-field $\F_0$ is trivial and, in general, $\bigvee_{t\geq0}\F_t=\F_{\infty-}\subseteq\A$, with the inclusion being potentially strict.
We also let $S=(S_t)_{t\geq0}$ be a given $\Real^d$-valued local martingale on $\basisp$. 

As the second main ingredient of our framework, we consider an $\A$-measurable random variable $L$ taking values in a Lusin space $(E,\cE)$, where $\cE$ denotes the Borel $\sigma$-field of $E$. Let $\lambda : \cE \rightarrow [0,1]$ be the (unconditional) law of $L$, so that $\lambda(B) = \prob(L \in B)$ holds for all $B \in \cE$. 
We then enlarge the filtration $\FF$ by adding the information of the random variable $L$ to the initial $\sigma$-field $\F_0$, i.e., we consider the filtration $\GG=(\G_t)_{t\geq0}$ given by the right-continuous augmentation of the filtration $\GG^0=(\G^0_t)_{t\geq0}$ defined as $\G^0_t:=\F_t\vee\sigma(L)$, for all $t\geq0$. 

As an auxiliary tool, let us also introduce the product space $(\widehat{\Omega},\oA,\oPP)$ via
\[
\widehat{\Omega} := E\times\Omega,
\qquad
\oA := \cE\otimes\A,
\qquad
\oPP := \lambda\otimes\PP,
\]
equipped with the right-continuous filtration $\oFF = (\oF_t)_{t\geq0}$, defined by $\oF_t := \bigcap_{\varepsilon>0}(\cE\otimes\F_{t+\varepsilon})$.
We let $\cO(\oFF)$ and $\cP(\oFF)$ be the optional and predictable $\sigma$-fields, respectively, associated to the filtration $\oFF$. 
As remarked in \cite{Jac85}, it holds that $\cE\otimes\cO(\FF)\subseteq\cO(\oFF)$ and $\cE\otimes\cP(\FF)=\cP(\oFF)$. Moreover, if $A\in\cO(\oFF)$, then the section $A_x:=\{(\omega,t)\in \Omega\times\Real_+ : (x,\omega,t)\in A\}$ is $\cO(\FF)$-measurable, for every $x\in E$.
Finally, we denote by $\oE[\cdot]$ the expectation operator with respect to the product measure $\oPP$.

\subsection{The conditional densities of $L$}

For all $t\in\Real_+$, let $\nu_t : \Omega \times \cE \rightarrow [0,1]$ be a regular version of the $\F_t$-conditional law of $L$ (which always exists since the space $(E,\cE)$ is Lusin).
The following assumption will play a central role in our analysis.

\begin{ass}	\label{ass:Jacod}
For all $t\in\Real_+$, $\nu_t \ll\lambda$ holds in the $\PP$-a.s. sense.
\end{ass}

Assumption~\ref{ass:Jacod} corresponds to the classical density hypothesis introduced in \cite{Jac85}. Indeed, as shown in \cite[Proposition 1.5]{Jac85} (see also \cite[Theorem VI.11]{MR1037262}), Assumption~\ref{ass:Jacod} holds if and only if, for all $t\in\Real_+$, there exists a positive $\sigma$-finite measure $\gamma_t$ on $(E,\cE)$ such that $\nu_t \ll \gamma_t$ holds in the $\PP$-a.s. sense. 

The following lemma gives the existence of a good version of the conditional densities and essentially corresponds to \cite[Lemma 1.8]{Jac85} (see Section \ref{proofs} for a proof). 

\begin{lem}	\label{lem-Jac}
Suppose that Assumption \ref{ass:Jacod} holds.
Then there exists a $\pare{\cE \otimes \cO(\FF)}$-measurable function $E \times \Omega \times \Real_+ \ni (x, \omega,t)\mapsto q^x_t(\omega) \in \Real_+$, c\`adl\`ag \ in $t \in\Real_+$ and such that:
\begin{itemize}
\item[(i)]
for every $t\in\Real_+$, $\nu_t(\ud x) = q^x_t \,\lambda(\ud x)$ holds $\prob$-a.s;
\item[(ii)]
for every $x\in E$, the process $q^x=(q^x_t)_{t\geq0}$ is a martingale on $\basisp$.
\end{itemize}
\end{lem}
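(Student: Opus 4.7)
The statement is essentially a reformulation of Jacod's Lemma 1.8, and I would follow his strategy by working on the product space $(\widehat\Omega,\oA,\oPP)$ introduced in Subsection 2.2.

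\emph{Step 1 -- joint measurability at rational times.} For fixed $t\in\QQ_+$, define a finite measure $\mu_t$ on $(\widehat\Omega,\cE\otimes\F_t)$ by $\mu_t(C\times F):=\PP(\{L\in C\}\cap F)$ for $C\in\cE$ and $F\in\F_t$, extended by Carath\'eodory. Assumption~\ref{ass:Jacod} implies $\mu_t\ll\oPP=\lambda\otimes\PP$ on $\cE\otimes\F_t$, hence a Radon--Nikodym derivative $\tilde q_t\in L^1(\oPP)$ exists and is $(\cE\otimes\F_t)$-measurable. This provides a jointly measurable version of the conditional density, in the sense that $\int_B\tilde q^x_t(\omega)\,\lambda(\mathrm{d} x)=\nu_t(B)(\omega)$ holds $\PP$-a.s. for every $B\in\cE$. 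By the tower property, for $\lambda$-a.e. $x\in E$ the process $(\tilde q^x_t)_{t\in\QQ_+}$ is a nonnegative $\FF$-martingale indexed by the rationals.

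\emph{Step 2 -- c\`adl\`ag regularization.} For $\lambda$-a.e. $x$, Doob's regularization theorem ensures that right limits along $\QQ_+$ of $\tilde q^x$ exist $\PP$-a.s. Accordingly, set
\[
q^x_t(\omega)\dfn\limsup_{s\downarrow t,\,s\in\QQ_+}\tilde q^x_s(\omega),
\]
with the convention that $q^x\equiv 0$ on the $\cE$-measurable, $\lambda$-null set of $x$'s for which Doob regularization fails. Countable $\limsup$ operations preserve $(\cE\otimes\F_{t+})$-measurability, and since $\FF$ is right-continuous, the resulting process is $(\cE\otimes\cO(\FF))$-measurable and c\`adl\`ag in $t$ for every $x\in E$.

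\emph{Step 3 -- verification of (i) and (ii).} Property (ii) is immediate from Step 2: for $\lambda$-a.e. $x$, the process $q^x$ is the c\`adl\`ag modification of the rational-time $\FF$-martingale $(\tilde q^x_t)_{t\in\QQ_+}$, hence an $\FF$-martingale in the right-continuous filtration $\FF$; on the exceptional $\lambda$-null set the constant $0$ trivially serves. For property (i): fix $B\in\cE$; the bounded $\FF$-martingale $t\mapsto\nu_t(B)$ admits a c\`adl\`ag modification, and along rationals $\nu_t(B)=\int_B\tilde q^x_t\,\lambda(\mathrm{d} x)$ $\PP$-a.s. Passing to the right limit $s\downarrow t$ along $\QQ_+$ and invoking Fubini together with dominated convergence yields $\nu_t(B)=\int_B q^x_t\,\lambda(\mathrm{d} x)$ outside a $\PP$-null set depending on $B$; a monotone class argument extends this simultaneously to all $B\in\cE$.

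\emph{Main obstacle.} The delicate point is the interplay between the $\PP$-null exceptional sets arising in Doob's regularization (which a priori depend on $x$) and the joint measurability requirement in $(x,\omega,t)$. This is handled by working with $\limsup$ along the countable dense set $\QQ_+$ and by observing that the set of $x$'s on which $(\tilde q^x_t)_{t\in\QQ_+}$ fails to be a nonnegative rational-time martingale is $\cE$-measurable and $\lambda$-null, so the ad hoc redefinition on that set preserves both the joint measurability of $q$ and the identities in (i)--(ii).
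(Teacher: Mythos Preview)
Your approach---Radon--Nikodym on the product space at rational times followed by Doob regularization---is essentially a reconstruction of \cite[Lemma 1.8]{Jac85}, which the paper simply cites. The route is sound, but there is a gap in Step~2. Your justification (``countable $\limsup$ operations preserve $(\cE\otimes\F_{t+})$-measurability, and since $\FF$ is right-continuous, the resulting process is $(\cE\otimes\cO(\FF))$-measurable'') only shows that $(x,\omega)\mapsto q^x_t(\omega)$ is $(\cE\otimes\F_t)$-measurable for each \emph{fixed} $t$, together with c\`adl\`ag paths. A c\`adl\`ag process adapted to the product filtration is measurable with respect to the optional $\sigma$-field $\cO(\oFF)$ of that filtration, but \emph{not} automatically with respect to the product $\sigma$-field $\cE\otimes\cO(\FF)$: recall from Subsection~\ref{subsec:setting} that in general only the inclusion $\cE\otimes\cO(\FF)\subseteq\cO(\oFF)$ is available (equality holds for the predictable $\sigma$-fields, not for the optional ones). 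Your ``main obstacle'' paragraph addresses a different issue---the $x$-dependence of the Doob exceptional sets---and the $\limsup$ trick does resolve that, but it does not by itself upgrade the measurability from $\cO(\oFF)$ to $\cE\otimes\cO(\FF)$.

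The paper's proof is built exactly around this distinction: it first cites \cite[Lemma 1.8]{Jac85} to obtain an $\cO(\oFF)$-measurable version $\tilde q$ satisfying (i)--(ii), and then invokes \cite[Remark 1 after Proposition 3]{SY} to pass to a $(\cE\otimes\cO(\FF))$-measurable modification, indistinguishable from $\tilde q$ for each fixed $x\in E$. Your construction recovers the first step but omits the second; to close the gap you should either invoke the Stricker--Yor result or supply a direct argument that your $\limsup$-regularization lands in $\cE\otimes\cO(\FF)$ rather than merely in $\cO(\oFF)$.
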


For every $x\in E$ and $n\in\N$, let us define the following $\FF$-stopping times:
\[
\zeta^x_n:=\inf\{t\in\R_+ \such q^x_t<1/n\}
\qquad\text{and}\qquad
\zeta^x:=\inf\{t\in\R_+ \such q^x_t=0\}.
\]
For every $x\in E$, it holds that $\{\zeta^x_n\}_{\nin}$ is a nondecreasing sequence, $\prob(\limn \zeta^x_n = \zeta^x) = 1$, and $q^x=0$ on $\dbraco{\zeta^x,\infty}$ (see also \cite[Lemma 1.8]{Jac85}). Note also that, due to \cite[Corollary 1.11]{Jac85}, it holds that $\prob(\zeta^L < \infty) = 0$, with $\zeta^L(\omega) \dfn \zeta^{L(\omega)}(\omega)$. As in \cite{ACJ15,AFK}, for every $x\in E$, we consider the $\F_{\zeta^x}$-measurable event $\Lambda^x:=\{\zeta^x<\infty,q^x_{\zeta^x-}>0\}$ and define
\be	\label{eta-init}
\eta^x := \zeta^x_{\Lambda^x} = \zeta^x\ind_{\Lambda^x}+\infty\ind_{\Omega\setminus\Lambda^x}, 
\ee
which is an $\FF$-stopping time and represents the time at which $q^x$ jumps to zero.

It is a fundamental result of \cite{Jac85} that, under Assumption \ref{ass:Jacod}, the canonical decomposition  in $\GG$ of an arbitrary $\FF$-local martingale can be written in terms of the conditional densities (evaluated at $x=L$). However, a different type of decomposition of $\FF$-local martingales in $\GG$ turns out to be better suited to our analysis. The following proposition has been recently established in \cite[Theorem 5]{ACJ15} and provides an optional decomposition (as opposed to the canonical decomposition) of the $\FF$-local martingale $(S_t)_{t\geq0}$ in $\GG$.

\begin{prop}	\label{prop:Aks}
Suppose that Assumption \ref{ass:Jacod} holds and that the space $L^1(\Omega,\A,\prob)$ is separable. Then the process $S^{\GG}=(S^{\GG}_t)_{t\geq0}$ defined as
\be	\label{eq:dec_Aks}
S^{\GG} := S - \frac{1}{q^L}\cdot [S,q^L] + \left(\Delta S_{\eta^x}\ind_{\dbraco{\eta^x,\infty}}\right)^{p,\FF}\bigr|_{x=L}
\ee
is a local martingale on $(\Omega,\GG,\prob)$, with $(\Delta S_{\eta^x}\ind_{\dbraco{\eta^x,\infty}})^{p,\FF}$ denoting a $\cE$-measurable version of the dual $\FF$-predictable projection of the process $\Delta S_{\eta^x}\ind_{\dbraco{\eta^x,\infty}}$.
\end{prop}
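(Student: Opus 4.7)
The plan is to verify that $S^\GG\in\Mloc(\prob,\GG)$ via a parameterized family argument. For each $x\in E$ I would exhibit an $\FF$-adapted process $Y^x$ such that $S^\GG=Y^L$ holds $\prob$-a.s., and then show that the product $Y^x q^x$ belongs to $\Mloc(\prob,\FF)$ for $\lambda$-a.e.\ $x\in E$. Under Assumption~\ref{ass:Jacod} this is the standard criterion which converts the $\GG$-local martingale property into a uniform (in $x$) $\FF$-local martingale property, via the product structure on $(\widehat{\Omega},\oA,\oPP)$; the same characterization will in any case play a central role later in the paper.

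For each $x\in E$, the natural choice is
\[
Y^x \dfn S - \frac{1}{q^x}\cdot[S,q^x] + \bigl(\Delta S_{\eta^x}\ind_{\dbraco{\eta^x,\infty}}\bigr)^{p,\FF},
\]
where the Stieltjes integral $\frac{1}{q^x}\cdot[S,q^x]$ is defined on the stochastic interval $\dbraco{0,\zeta^x}$ as the limit of the well-defined integrals stopped at the $\FF$-stopping times $\zeta^x_n\uparrow\zeta^x$ (beyond $\zeta^x$ the process $q^x$ vanishes, so $[S,q^x]$ is constant there). The finite-variation correction $A^x\dfn(\Delta S_{\eta^x}\ind_{\dbraco{\eta^x,\infty}})^{p,\FF}$ records the jump of $S$ at the (possibly infinite) time $\eta^x$ at which $q^x$ drops to zero, and exists as a dual $\FF$-predictable projection by local integrability of $\Delta S_{\eta^x}$.

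The core verification that $Y^x q^x\in\Mloc(\prob,\FF)$ is then carried out by integration by parts applied to each of the three summands of
\[
Y^x q^x = S\,q^x - \Bigl(\tfrac{1}{q^x}\cdot[S,q^x]\Bigr) q^x + A^x q^x,
\]
using that $Sq^x-[S,q^x]\in\Mloc(\prob,\FF)$ (since both $S$ and $q^x$ are in $\Mloc(\prob,\FF)$). The question thus reduces to showing that the remaining bounded-variation pieces cancel. On $\dbraco{0,\zeta^x}$ the identity $(\frac{1}{q^x}\cdot[S,q^x])\cdot q^x=[S,q^x]$ holds modulo stochastic integrals with respect to $\ud q^x$; the only defect appears at $\eta^x$, where $\Delta[S,q^x]_{\eta^x}=-\Delta S_{\eta^x}\,q^x_{\eta^x-}$ while $q^x_{\eta^x}=0$. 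This defect is precisely what $A^x q^x$ absorbs, as the dual $\FF$-predictable projection was chosen to compensate exactly the pure jump $\Delta S_{\eta^x}\ind_{\dbraco{\eta^x,\infty}}$. Localizing by $\zeta^x_n$ and passing to the limit completes the argument for each $x$.

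Substituting $x=L$ and invoking that $\prob(\zeta^L<\infty)=0$ (see \cite[Corollary~1.11]{Jac85}), one deduces that $q^L>0$ at all finite times $\prob$-a.s., so $\frac{1}{q^L}\cdot[S,q^L]$ is globally well defined and $S^\GG=Y^L\in\Mloc(\prob,\GG)$. The principal technical obstacle is the bookkeeping across the jump of $q^x$ to zero at $\eta^x$: the compensator $A^x$ is what makes this an \emph{optional} decomposition (in contrast to the classical \emph{predictable} canonical decomposition of \cite{Jac85}), and it is precisely this feature that allows the statement to hold under the mere absolute continuity of Assumption~\ref{ass:Jacod}, without requiring equivalence of $\nu_t$ and $\lambda$.
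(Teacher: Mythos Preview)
The paper does not prove this proposition; it is quoted from \cite[Theorem~5]{ACJ15}, with the separability hypothesis needed only to guarantee an $x$-measurable version of the dual predictable projection (Remark~\ref{rem:sep}). There is thus no in-paper argument to compare against, and your sketch is effectively an attempt to reconstruct that external proof.

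Your strategy is sound in outline: define a parameterized family $Y^x$ with $S^{\GG}=Y^L$, verify $Y^xq^x\in\Mloc(\prob,\FF)$ for each $x$, and invoke (a local-martingale version of) the characterization that later appears as Proposition~\ref{prop:mart}. Two points need more care than you give them, however. First, since $q^x_{\eta^x}=0$ while $\Delta[S,q^x]_{\eta^x}=-\Delta S_{\eta^x}\,q^x_{\eta^x-}$ is in general nonzero, the integral $\frac{1}{q^x}\cdot[S,q^x]$ genuinely explodes at $\eta^x$; one must stop it strictly before $\eta^x$ and then track the residual finite-variation term after integration by parts, which turns out to be exactly $-q^x_{\eta^x-}\Delta S_{\eta^x}\ind_{\dbraco{\eta^x,\infty}}$. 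Second, showing that the $A^x$ contribution cancels this residual requires both the observation that $q^x_-\cdot\bigl(\Delta S_{\eta^x}\ind_{\dbraco{\eta^x,\infty}}-A^x\bigr)\in\Mloc(\prob,\FF)$ (locally bounded predictable integrand against a local martingale) and Yoeurp's lemma to dispose of $[A^x,q^x]$; these steps are routine but are not spelled out in your sketch. Lastly, Proposition~\ref{prop:mart} is stated for true martingales, so an additional localization argument is needed to apply it here.
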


\begin{rem}	\label{rem:sep}
The separability assumption appearing in Proposition \ref{prop:Aks} is only needed to ensure the existence of a version of the dual $\FF$-predicable projection of $\Delta S_{\eta^x}\ind_{\dbraco{\eta^x,\infty}}$ which is measurable in $x$, see \cite[Proposition 4]{SY}.\footnote{As remarked in \cite[Remark 1 after Proposition 4]{SY}, this separability assumption is always verified in practice and it is not useful to consider further generalizations.} 
\end{rem}

\subsection{The strong predictable representation property in $\GG$}	\label{sec:main_results}

This section contains the statements of the main results. Since the proofs are rather technical and involve several intermediate steps and auxiliary results, they are postponed to Section \ref{proofs}. We start with the following theorem, which represents the central result, referring to Section \ref{outline} for an outline of its proof.

\begin{thm}	\label{thm:main}
Suppose that Assumption \ref{ass:Jacod} holds and that the space $L^1(\Omega,\A,\prob)$ is separable. If $S=(S_t)_{t\geq0}$ has the strong predictable representation property on $(\Omega,\FF,\prob)$, then the process $S^{\GG}=(S^{\GG}_t)_{t\geq0}$ defined in \eqref{eq:dec_Aks} has the strong predictable representation property on $(\Omega,\GG,\prob)$.
\end{thm}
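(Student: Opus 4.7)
The plan is to lift the problem to the product space $(\widehat{\Omega}, \oA, \oPP)$, establish a parameterized predictable representation there, and then project back to $\GG$ via the substitution $x \mapsto L$. The starting point is the structural observation (to be proved first) that, under Assumption~\ref{ass:Jacod}, the filtration $\GG$ is right-continuous and every $\GG$-local martingale $M$ is of the form $M_t = \widehat{M}^x_t\bigr|_{x=L}$ for some $\pare{\cE \otimes \cO(\FF)}$-measurable map $(x, \omega, t) \mapsto \widehat M^x_t(\omega)$ such that, for $\lambda$-a.e.\ $x$, the process $\widehat M^x q^x$ is an $\FF$-local martingale on $\dbraco{0, \eta^x}$ with a jump at $\eta^x$ compatible with $\Delta S_{\eta^x}$. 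This $\GG$-martingale characterization should follow from Lemma~\ref{lem-Jac} via a Bayes-type computation and the analysis of $\FF$-local martingales in $\GG$ carried out in \cite{AFK}, using in particular that $\PP(\zeta^L < \infty) = 0$.

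Next, for fixed $x \in E$, I would show that $\widehat M^x$ admits a predictable representation with respect to an $\oFF$-lift $\widehat{S^{\GG}}^x$ of $S^{\GG}$, on the stochastic interval $\dbraco{0, \eta^x}$. Since $q^x/q^x_0$ is a strictly positive $\FF$-martingale on $\dbraco{0, \eta^x}$, Girsanov combined with the invariance of PRP under equivalent changes of measure (see e.g.\ \cite[Ch.~IV]{MR542115}) transfers the PRP of $S$ on $(\Omega, \FF, \prob)$ to a PRP for $\widehat{S^\GG}^x$ on that interval. At the jump time $\eta^x$ the density $q^x$ vanishes, and the term $\pare{\Delta S_{\eta^x}\ind_{\dbraco{\eta^x,\infty}}}^{p,\FF}\bigr|_{x=L}$ appearing in \eqref{eq:dec_Aks} is precisely what is required to absorb the jump of $\widehat M^x$ into the representation; this is where the optional decomposition of \cite{ACJ15} is essential and the purely canonical Jacod decomposition would be insufficient.

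The third step is to promote these $x$-by-$x$ representations to a jointly measurable one. Since $L^1(\Omega, \A, \prob)$ is separable, the parameterized stochastic integration theory of \cite{SY} applies and produces a $\pare{\cE \otimes \cP(\FF)}$-measurable integrand $(x, \omega, t) \mapsto \widehat \varphi^x_t(\omega)$ such that $\widehat M^x = \widehat M^x_0 + \widehat\varphi^x \cdot \widehat{S^\GG}^x$ holds for $\lambda$-a.e.\ $x$. Substituting $x = L$ yields a $\GG$-predictable process $\varphi_t(\omega) := \widehat\varphi^{L(\omega)}_t(\omega)$, since $\cE \otimes \cP(\FF) = \cP(\oFF)$, and the same theorem of \cite{SY} guarantees that the stochastic integral commutes with this substitution. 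This gives $M = M_0 + \varphi \cdot S^\GG$, and since $M_0$ is $\G_0$-measurable and $\G_0 = \F_0 \vee \sigma(L)$ up to right-continuous augmentation, the strong predictable representation property of $S^\GG$ on $(\Omega, \GG, \prob)$ follows.

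The main obstacle I expect is the second step. Under Jacod's hypothesis in its purely absolutely continuous form, the density $q^x$ genuinely vanishes on a non-negligible set, so no global equivalent change of measure is available and the representation of $\widehat M^x$ must be patched across the stopping time $\eta^x$. Showing that the dual-projection correction term in \eqref{eq:dec_Aks} is \emph{exactly} what is needed to close the representation globally, and doing so while preserving joint measurability in $x$, is the technical core of the argument and is what distinguishes this result from the existing ones obtained under the stronger equivalence assumption.
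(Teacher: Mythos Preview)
Your broad strategy --- lift to the product space, obtain a parameterized representation, substitute $x=L$ --- matches the paper, but your second step contains a genuine gap and diverges from what the paper actually does.

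The specific error is the claim that $q^x/q^x_0$ is a strictly positive $\FF$-martingale on $\dbraco{0,\eta^x}$. Recall that $\eta^x = \zeta^x$ only on the event $\Lambda^x = \{\zeta^x < \infty,\, q^x_{\zeta^x-} > 0\}$, and $\eta^x = \infty$ otherwise. Thus whenever $q^x$ reaches zero \emph{continuously} (so $q^x_{\zeta^x-} = 0$), one has $\eta^x = \infty$ while $q^x$ vanishes from $\zeta^x < \infty$ onward; the interval $\dbraco{0,\eta^x}$ then includes the region where $q^x = 0$, so no Girsanov argument via $q^x/q^x_0$ is available there. More generally, since $q^x$ may hit zero with positive probability, $q^x/q^x_0$ does not define an equivalent change of measure and the invariance of PRP under equivalent measure changes does not apply for fixed $x$. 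Your proposed ``patching at $\eta^x$'' does not address this, because the difficulty already arises strictly before $\eta^x$.

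The paper sidesteps this entirely by never performing a change of measure at the $x$-level. Instead it uses the fact that $m^x q^x$ is a \emph{global} $\FF$-martingale (trivially zero after $\zeta^x$) and applies the $\FF$-PRP of $S$ directly to $m^x q^x$ and to $q^x$, obtaining $\pare{\cE\otimes\cP(\FF)}$-measurable integrands $K^x$ and $H^x$ via the product-space argument. Only \emph{after} evaluating at $x=L$ --- where the strict positivity $q^L > 0$ $\prob$-a.s.\ is available --- does one write $M = (q^L m^L)/q^L$ and apply integration by parts. The optional decomposition \eqref{eq:dec_Aks} enters at the final step, and the dual-projection term $(\Delta S_{\eta^x}\ind_{\dbraco{\eta^x,\infty}})^{p,\FF}|_{x=L}$ is shown to \emph{vanish} in the representation of $M$ (via a cancellation coming from $\Delta(m^x q^x)_{\eta^x} = -q^x_{\eta^x-}m^x_{\eta^x-}$ and $\Delta q^x_{\eta^x} = -q^x_{\eta^x-}$), rather than being needed to absorb a jump of $\widehat M^x$ as you suggest.
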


In view of Definition \ref{def:PRP}, Theorem \ref{thm:main} shows that, if $S$ has the strong predictable representation property on $\basisp$, then every local martingale $M=(M_t)_{t\geq0}$ on $\basisgp$ admits the stochastic integral representation
\be	\label{eq:repr_G}
M_t = M_0 + (\varphi\cdot S^{\GG})_t,
\qquad \text{ $\prob$-a.s. for all }t\in\Real_+,
\ee
where $(\varphi_t)_{t\geq0}$ is a $\GG$-predictable process admitting a rather explicit characterization (see \eqref{eq:integrand}).
As mentioned in the introduction, Theorem \ref{thm:main} generalizes the martingale representation results previously obtained in the literature on initially enlarged filtrations. In particular, no assumption is made on the family $\{q^x : x\in E\}$ of conditional densities of $L$, apart from its existence.   

In the remaining part of this subsection, we present some alternative martingale representation results under additional assumptions on the conditional densities of $L$. We start with the following corollary (proved in Section \ref{proofs_main}) which shows that, if the $\FF$-martingale $q^x$ can only reach zero continuously (and not due to a jump), for $\lambda$-a.e. $x\in E$, then the strong predictable representation property of $S$ on $\basisp$ can be easily transferred to $\basisgp$ up to a suitable ``change of num\'eraire'' with respect to the process $(q^L_t)_{t\geq0}$.

\begin{cor}	\label{cor:no_jump}
Suppose that Assumption \ref{ass:Jacod} holds and that $\prob(\eta^x<\infty)=0$ for $\lambda$-a.e. $x\in E$. If the process $S=(S_t)_{t\geq0}$ has the strong predictable representation property on $(\Omega,\FF,\prob)$, then the $\Real^{d+1}$-valued process $(1/q^L_t,S_t/q^L_t)_{t\geq0}$ is a $\GG$-local martingale and  has the strong predictable representation property on $(\Omega,\GG,\prob)$.
\end{cor}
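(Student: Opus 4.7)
The plan is to transfer the strong PRP from $S^\GG$, supplied by Theorem~\ref{thm:main}, to the pair $(1/q^L, S/q^L)$ via a Bayes-type disintegration of $\GG$-conditional expectations together with an integration by parts identity.

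First, I would simplify $S^\GG$ under the additional hypothesis. Because $\prob(\eta^x<\infty)=0$ for $\lambda$-a.e.\ $x \in E$, the random set $\dbraco{\eta^x, \infty}$ is $\prob$-evanescent for $\lambda$-a.e.\ $x$, and hence the process $\Delta S_{\eta^x}\ind_{\dbraco{\eta^x, \infty}}$ is $\prob$-indistinguishable from zero. A fortiori its dual $\FF$-predictable projection vanishes for $\lambda$-a.e.\ $x$, and evaluation at $x=L$ reduces \eqref{eq:dec_Aks} to
\[
S^\GG = S - \frac{1}{q^L}\cdot[S, q^L],
\]
which has the strong PRP on $(\Omega, \GG, \prob)$ by Theorem~\ref{thm:main}.

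Next, I would verify that the pair $(1/q^L, S/q^L)$ is a $\GG$-local martingale. By \cite[Corollary~1.11]{Jac85}, $\prob(\zeta^L<\infty)=0$; moreover, a non-negative c\`adl\`ag $\FF$-martingale is absorbed at zero (by optional stopping), so $q^L$ cannot jump up from~$0$ either. Consequently $q^L$ is strictly positive and bounded away from zero on every bounded interval $\prob$-a.s., and the $\GG$-stopping times $T_n := \inf\{t : q^L_t \leq 1/n\}$ satisfy $T_n \uparrow \infty$ $\prob$-a.s. The key tool is the classical disintegration formula
\[
\expec[\phi^L_t \mid \G_s] = \frac{\expec[\phi^x_t\, q^x_t \mid \F_s]}{q^x_s}\bigg|_{x=L}, \qquad 0\leq s \leq t,
\]
valid for any $(\cE\otimes\cO(\FF))$-measurable family $(\phi^x)$ such that $\phi^x_t$ is $\F_t$-measurable and $\phi^x q^x$ is integrable. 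Applied separately to $\phi^x_t := 1/q^x_{t\wedge T^x_n}$ and to $\phi^x_t := S_{t\wedge T^x_n}/q^x_{t\wedge T^x_n}$, with $T^x_n := \inf\{t : q^x_t \leq 1/n\}$ so that $T^L_n = T_n$ $\prob$-a.s., a short case analysis on the ordering of $s$, $t$ and $T^x_n$ shows that the $\FF$-processes $q^x_t / q^x_{t\wedge T^x_n}$ and $S_{t\wedge T^x_n}\cdot q^x_t/q^x_{t\wedge T^x_n}$ are $\FF$-martingales. For the first this is the optional stopping theorem applied to $q^x$; for the second one additionally uses optional stopping of $S$ at $T^x_n$ and the fact that $q^x_{T^x_n}>0$ on $\{T^x_n<\infty\}$, which is ensured by the no-jump-to-zero hypothesis. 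Hence $(1/q^L)_{\cdot\wedge T_n}$ and $(S/q^L)_{\cdot\wedge T_n}$ are $\GG$-martingales, and $(1/q^L, S/q^L)$ is a $\GG$-local martingale.

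Finally, I would transfer the strong PRP. It\^o's integration by parts in $\GG$ applied to the identity $S = q^L \cdot (S/q^L)$, combined with It\^o's formula applied to $q^L = 1/(1/q^L)$, gives a representation of $\ud S^\GG_t$ as a $\GG$-predictable linear combination of $\ud(1/q^L)_t$ and $\ud(S/q^L)_t$. Therefore $S^\GG$ belongs to the stable subspace of $\Mloc(\prob, \GG)$ generated by $(1/q^L, S/q^L)$, and by the strong PRP of $S^\GG$ this stable subspace coincides with $\Mloc(\prob, \GG)$, yielding the strong PRP of $(1/q^L, S/q^L)$. The main obstacle I anticipate is a careful bookkeeping of the jump terms in this last integration by parts, where the brackets $[S/q^L, q^L]$ and $[1/q^L, q^L]$ both appear and must be handled via It\^o's formula for $y\mapsto 1/y$; the delicate point is to rewrite the resulting optional finite-variation residuals as $\GG$-predictable integrals against the two components of $(1/q^L, S/q^L)$.
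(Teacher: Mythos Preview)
Your approach coincides with the paper's: simplify $S^{\GG}$ to $S-\frac{1}{q^L}\cdot[S,q^L]$ under the no-jump hypothesis, establish that $Y:=(1/q^L,S/q^L)$ is a $\GG$-local martingale, represent $S^{\GG}$ as a predictable integral of $Y$, and conclude. The integration by parts you flag as ``the main obstacle'' is in fact clean: using $[S,1/q^L]=-\frac{1}{q^Lq^L_-}\cdot[S,q^L]$ and the product rule for $S/q^L$, one obtains the exact identity
\[
S^{\GG} = S_0 + q^L_-\cdot\frac{S}{q^L} - (q^L_-S_-)\cdot\frac{1}{q^L},
\]
so no optional residuals survive. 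The paper then passes from the strong PRP of $S^{\GG}$ to that of $Y$ by a concrete truncation-plus-Emery-convergence argument rather than your stable-subspace shortcut, but both are valid.

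The one genuine gap is in your direct verification that $S/q^L$ is a $\GG$-local martingale. You assert that $S_{t\wedge T^x_n}\,q^x_t/q^x_{t\wedge T^x_n}$ is an $\FF$-\emph{martingale} ``by optional stopping of $S$ at $T^x_n$''; but $S$ is only an $\FF$-local martingale, so $S^{T^x_n}$ is only local, and the product of two (local) martingales need not be a martingale. One can check that $[S^{T^x_n},Z]=0$ with $Z_t:=q^x_t/q^x_{t\wedge T^x_n}$ (since $Z$ moves only after $T^x_n$ and $S^{T^x_n}$ only before), so the product is a \emph{local} $\FF$-martingale, but the Bayes disintegration you invoke requires a true martingale with the right integrability. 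The fix is to further localize by $\FF$-stopping times $\sigma_m$ reducing $S$, but this must be carried out carefully. The paper sidesteps this by citing \cite[Proposition~3.6]{AFK} (see also \cite[Proposition~9]{ACJ15}); alternatively, once the integration-by-parts identity above is in hand, the $\GG$-local martingale property of $Y$ can also be read off from Lemma~\ref{lem:q_Aks} and the fact that $S^{\GG}\in\Mloc(\prob,\GG)$.
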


\begin{rem}	\label{rem:num}
Theorem \ref{thm:main} and Corollary \ref{cor:no_jump} show that, as long as $\prob(\eta^x<\infty)=0$ for $\lambda$-a.e. $x\in E$, the martingale representation property in $\GG$ can be expressed in terms of both $S^{\GG}$ and $(1/q^L,S/q^L)$. The representation result of Theorem \ref{thm:main} is obviously more general and holds with respect to a $d$-dimensional process (i.e., of the same dimension of the original process $S$). The representation result of Corollary \ref{cor:no_jump} is less general and requires a $(d+1)$-dimensional process.\footnote{The precise relation between $S^{\GG}$ and $(1/q^L,S/q^L)$ is shown in the proof of Corollary \ref{cor:no_jump}, given in Section \ref{proofs_main}.} 
However, the process $(1/q^L,S/q^L)$ admits an important interpretation, especially in the context of financial modeling. Indeed, under the assumptions of Corollary \ref{cor:no_jump} and in view of \cite{AFK}, the process $q^L$ represents the {\em num\'eraire portfolio} for $S$ in the enlarged filtration $\GG$ (see \cite{MR2335830}).  In this sense, Corollary \ref{cor:no_jump} shows that the martingale representation property can be transferred from $\FF$ onto $\GG$ by changing the num\'eraire, choosing the num\'eraire portfolio in $\GG$ as the baseline asset.
Note also that, in view of applications, the process $(1/q^L,S/q^L)$ can be immediately deduced from a model's fundamental ingredients and does not require any computation, unlike the process $S^{\GG}$ appearing in Theorem \ref{thm:main}. 
\end{rem}

As mentioned in the introduction, the existing martingale representation results in initially enlarged filtrations have been obtained under the stronger assumption that $\nu_t\sim\lambda$ (see \cite[Theorem 4.3]{GP98}, \cite[Theorem 4.2]{Amen}, \cite[Theorem 3.2]{ABS} and \cite[Proposition 5.3]{CJZ}). 
This case corresponds to the following proposition, which can be easily deduced from our general approach, as shown in Section \ref{proofs_main}.

\begin{prop}	\label{prop:classic_case}
Suppose that $\nu_t\sim\lambda$ holds $\prob$-a.s. for all $t\in[0,T]$, for some fixed $T<\infty$.  
If $S=(S_t)_{t\in[0,T]}$ has the strong predictable representation property on $(\Omega,\FF,\prob)$, then $\ud\ptilde:=(q^L_0/q^L_T)\ud\prob$ defines a probability measure $\ptilde\sim\prob$ such that $S\in\mMloc(\ptilde,\GG)$ and $S$ has the strong predictable representation property on $(\Omega,\GG,\ptilde)$.
\end{prop}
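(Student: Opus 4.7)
The plan is to reduce the statement to Corollary~\ref{cor:no_jump} and then convert the resulting multidimensional representation into a one-dimensional representation with respect to $S$ alone under $\ptilde$, via a Bayes-rule change of measure. First, I observe that the equivalence assumption $\nu_t \sim \lambda$ on $[0,T]$ together with Lemma~\ref{lem-Jac}(i) forces $q^x_t > 0$ for $\lambda \otimes \prob$-almost every $(x, \omega)$, hence $\prob(\eta^x < \infty) = 0$ for $\lambda$-a.e.\ $x \in E$. Corollary~\ref{cor:no_jump} therefore applies and gives that $Y := (1/q^L, S/q^L)$ is an $\Real^{d+1}$-valued $\prob$-$\GG$-local martingale with the strong predictable representation property on $(\Omega, \GG, \prob)$.

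Next I would show that $Z_t := q^L_0/q^L_t$ defines a strictly positive $\prob$-$\GG$-martingale on $[0,T]$ with $Z_0 = 1$. Since $q^L_0$ is positive and $\G_0$-measurable and $1/q^L$ is a $\prob$-$\GG$-local martingale, $Z$ is a positive local martingale, and the true martingale property reduces to $\expecp[Z_T] = 1$. Conditioning on $\F_T$ and applying Lemma~\ref{lem-Jac}(i) twice,
\[
\expecp\bigl[q^L_0/q^L_T \,\bigm|\, \F_T\bigr] = \int_E \frac{q^x_0}{q^x_T}\, \nu_T(\ud x) = \int_E q^x_0 \, \lambda(\ud x) = \nu_0(E) = 1 \quad \prob\text{-a.s.,}
\]
so $\expecp[Z_T] = 1$. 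Combined with $q^L_T > 0$ $\prob$-a.s., this makes $\ptilde$ a probability measure on $(\Omega, \G_T)$ equivalent to $\prob$.

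By Bayes' rule, $S \in \mMloc(\ptilde, \GG)$ iff $ZS = q^L_0 \cdot (S/q^L) \in \mMloc(\prob, \GG)$, and the latter holds by Corollary~\ref{cor:no_jump}. For the SPRP, let $M$ be any element of $\mMloc(\ptilde, \GG)$; by Bayes' rule again, $N := ZM \in \mMloc(\prob, \GG)$, so the SPRP of $Y$ under $\prob$ yields $\GG$-predictable processes $\alpha$ (scalar) and $\beta$ ($\Real^d$-valued) such that
\[
N = N_0 + \alpha \cdot (1/q^L) + \beta \cdot (S/q^L).
\]
Using $1/q^L = Z/q^L_0$ and $S/q^L = ZS/q^L_0$ rewrites this as $N = N_0 + (\alpha/q^L_0)\cdot Z + (\beta/q^L_0)\cdot (ZS)$. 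An integration by parts for $M = N \cdot Z^{-1}$, combined with $d(ZS) = Z_{-}\, dS + S_{-}\, dZ + d[Z,S]$, then allows the bounded variation contributions originating from $Z$ (which reproduce the Girsanov drift $\int Z_{-}^{-1}\, d[M,Z]$) to cancel, leaving a representation $M = M_0 + \varphi \cdot S$ under $\ptilde$ for a suitable $\GG$-predictable $\varphi$ built from $\beta$, $Z$, and $q^L_0$.

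The main obstacle is this last algebraic reorganization: one must verify that the $(1/q^L)$-integrator, which is precisely a scalar multiple of the density process $Z$, is exactly absorbed by the Girsanov compensation associated with the change of measure, so that only an $S$-integrator survives. This is a specific instance of the classical preservation of SPRP under an equivalent change of probability measure, and it succeeds in the present setting because $Y$ contains exactly one extra component on top of $S/q^L$, namely $1/q^L$, which carries all the density information needed to pass from $\prob$ to $\ptilde$.
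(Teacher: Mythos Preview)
Your route through Corollary~\ref{cor:no_jump} is legitimate---the paper itself remarks, just before its proof of Proposition~\ref{prop:classic_case}, that the result ``can of course be obtained as a special case of Corollary~\ref{cor:no_jump}''---but it is considerably more roundabout than the argument the paper actually gives, and you leave the decisive step unfinished. The paper bypasses the $(d+1)$-dimensional process $Y=(1/q^L,S/q^L)$ entirely and instead invokes Proposition~\ref{lem:repr_1} directly: for any $N\in\M(\ptilde,\GG)$ one has $M:=q^L_0 N/q^L\in\M(\prob,\GG)$ by Bayes' rule, and Proposition~\ref{lem:repr_1} gives $q^L_t M_t=q^L_0 M_0+(K^L\cdot S)_t$, i.e.\ $q^L_0 N_t=q^L_0 N_0+(K^L\cdot S)_t$, hence $N=N_0+(K^L/q^L_0)\cdot S$. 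No integration by parts, no dimension reduction, no Girsanov cancellation. The point is that Proposition~\ref{lem:repr_1} already delivers the representation in ``num\'eraire form'' $q^L M=q^L_0 M_0+K^L\cdot S$, and the measure change $\ud\ptilde=(q^L_0/q^L_T)\ud\prob$ exactly strips off the $q^L$ factor.

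Your approach, by contrast, first inflates to $Y$, represents $N=ZM$ as $N_0+\alpha\cdot(1/q^L)+\beta\cdot(S/q^L)$, and then must show that after dividing by $Z$ the $\alpha$-contribution is absorbed so that only an $S$-integral survives. You correctly identify this as ``the main obstacle'' but do not carry it out; the appeal to ``classical preservation of SPRP under an equivalent change of probability measure'' is not sufficient as stated, since that result would give SPRP of the $\ptilde$-local-martingale part of the $(d+1)$-dimensional $Y$, not of the $d$-dimensional $S$ alone. The reduction can be completed (for instance by observing that under $\ptilde$ the process $q^L$ is itself a $\ptilde$-local martingale and, via \eqref{eq:repr_q} evaluated at $x=L$, a stochastic integral of $S$), but this is real additional work that your write-up omits. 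The paper's route through Proposition~\ref{lem:repr_1} avoids all of it.
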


We close this subsection with a last martingale representation result, under the same assumptions of Corollary \ref{cor:no_jump}. As a preliminary, we recall that, in view of \cite[Theorem 2.5]{Jac85}, the process $\langle S,q^x\rangle^{\FF}\bigr|_{x=L}$ is well-defined\footnote{Unlike the quadratic variation $[\cdot,\cdot]$, the predictable quadratic variation depends on the filtration. By \cite[Theorem 2.5]{Jac85}, there exists a $(\cE\otimes\cP(\FF))$-measurable version of the map $(x,\omega,t)\mapsto \langle S,q^x\rangle_t^{\FF}(\omega)$, which is well-defined on the set $\{q^x_->0\}$. Hence, $\langle S,q^x\rangle^{\FF}\bigr|_{x=L}$ denotes the $\FF$-predictable quadratic variation evaluated at $x=L$.} and the process $S-\frac{1}{q^L_-}\cdot\langle S,q^x\rangle^{\FF}\bigr|_{x=L}$ is a local martingale on $\basisgp$.
The following result (proved in Section \ref{proofs_main}) has been established in \cite[Proposition 5.5]{CJZ} under the stronger assumption that $\nu_t\sim\lambda$ $\prob$-a.s. for all $t\in\Real_+$.

\begin{cor}	\label{cor:no_jump_2}
Suppose that Assumption \ref{ass:Jacod} holds and that $\prob(\eta^x<\infty)=0$ for $\lambda$-a.e. $x\in E$. If $S=(S_t)_{t\geq0}$ has the strong predictable representation property on $(\Omega,\FF,\prob)$, then the process $\bar{S}^{\GG}:=S-\frac{1}{q^L_-}\cdot\langle S,q^x\rangle^{\FF}\bigr|_{x=L}$ has the strong predictable representation property on $\basisgp$.
\end{cor}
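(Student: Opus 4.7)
The plan is to deduce the corollary from Theorem \ref{thm:main} by comparing the two $\GG$-local martingale versions of $S$: the optional decomposition $S^{\GG}$ from \eqref{eq:dec_Aks} and the canonical decomposition $\bar{S}^{\GG}$. The key observation is that, under the additional hypothesis, these two processes generate the same space of stochastic integrals in $\basisgp$, so that the strong PRP of $S^{\GG}$ (supplied by Theorem \ref{thm:main}) can be transferred to $\bar{S}^{\GG}$.

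First I would simplify $S^{\GG}$. The assumption $\prob(\eta^x<\infty)=0$ for $\lambda$-a.e.\ $x\in E$ implies that, for such $x$, the process $\Delta S_{\eta^x}\ind_{\dbraco{\eta^x,\infty}}$ is $\prob$-indistinguishable from zero, so a $\cE$-measurable version of its dual $\FF$-predictable projection can be chosen identically zero for $\lambda$-a.e.\ $x$. Since $L$ has law $\lambda$, the last term in \eqref{eq:dec_Aks} vanishes upon evaluation at $x=L$, giving $S^{\GG}=S-\frac{1}{q^L}\cdot[S,q^L]$ $\prob$-a.s., and Theorem \ref{thm:main} then yields the strong PRP of $S^{\GG}$ on $\basisgp$. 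Next, the difference
\[
D \dfn S^{\GG}-\bar{S}^{\GG}=\tfrac{1}{q^L_-}\cdot\langle S,q^x\rangle\bigr|_{x=L}-\tfrac{1}{q^L}\cdot[S,q^L]
\]
is a $\GG$-local martingale of finite variation, null at zero, so applying Theorem \ref{thm:main} to $D$ delivers a $\GG$-predictable integrand $\Phi$ with $D=\Phi\cdot S^{\GG}$, equivalently $\bar{S}^{\GG}-S_0=(I-\Phi)\cdot S^{\GG}$.

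Given invertibility of $I-\Phi$ as a $\GG$-predictable integrand, one obtains $S^{\GG}-S_0=(I-\Phi)^{-1}\cdot\bar{S}^{\GG}$, so that every $\GG$-local martingale $M=M_0+\varphi\cdot S^{\GG}$ (as supplied by Theorem \ref{thm:main}) rewrites as $M=M_0+\varphi(I-\Phi)^{-1}\cdot\bar{S}^{\GG}$, which yields the strong PRP of $\bar{S}^{\GG}$. The main obstacle is precisely this invertibility. To handle it, I would compute the jumps explicitly: at any jump time $t$, $\Delta S^{\GG}_t=\Delta S_t\cdot q^L_{t-}/q^L_t$, while $\Delta\bar{S}^{\GG}_t=\Delta S_t-\Delta\langle S,q^x\rangle_t|_{x=L}/q^L_{t-}$, and use that $q^L>0$ almost surely under the hypothesis (a consequence of \cite[Corollary 1.11]{Jac85} combined with $\prob(\eta^x<\infty)=0$ for $\lambda$-a.e.\ $x$) to invert the jump identities pointwise; a measurable-selection argument then produces the required $\GG$-predictable inverse of $I-\Phi$. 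An alternative route that bypasses the inversion is to match the jumps and predictable compensators of $[S,q^L]$ and $\langle S,q^x\rangle|_{x=L}$ in order to verify directly that $D$ is a stochastic integral of $\bar{S}^{\GG}$, so that $S^{\GG}$ becomes a stochastic integral of $\bar{S}^{\GG}$ and the strong PRP transfers by composition with the representation of Theorem \ref{thm:main}.
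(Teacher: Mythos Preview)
Your approach is genuinely different from the paper's, and the part you flag as ``the main obstacle'' is indeed a real gap that your sketches do not close.

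The paper does \emph{not} compare $S^{\GG}$ and $\bar S^{\GG}$ directly. Instead it argues by change of measure: since $\prob(\eta^x<\infty)=0$ for $\lambda$-a.e.\ $x$, the process $1/q^L$ is a $\GG$-local martingale; along a localising sequence $\{\tau_n\}$ one defines $\ud\ptilde^n=(q^L_0/q^L_{\tau_n})\,\ud\prob$ on the stopped filtration $\GG^n$. Under $\ptilde^n$, the stopped process $S^{\tau_n}$ is a local martingale, and the auxiliary representation of Proposition~\ref{lem:repr_1} shows it has the strong PRP on $(\Omega,\GG^n,\ptilde^n)$. The Girsanov-invariance of the strong PRP (\cite[Theorem 13.12]{MR1219534}) then transfers this to $(\bar S^{\GG})^{\tau_n}$ on $(\Omega,\GG^n,\prob)$, and one concludes by letting $n\to\infty$. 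This route completely sidesteps any invertibility question.

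Your argument, by contrast, needs $S^{\GG}-S_0=\psi\cdot\bar S^{\GG}$ for some $\GG$-predictable $\psi$. Decomposing, on the continuous part $\Phi\cdot(S^{\GG})^c=D^c=0$, so $I-\Phi$ acts as the identity there; the issue is at the jumps. At a $\GG$-totally inaccessible time the predictable process $\langle S,q^x\rangle|_{x=L}$ does not jump, so $\Delta\bar S^{\GG}=\Delta S\neq0$ whenever $\Delta S^{\GG}\neq0$, and one reads off $(1-\Phi)=q^L/q^L_-$ there (in the scalar case). But at a $\GG$-predictable time $T$ you must rule out the event $\{\Delta S_T=\tfrac{1}{q^L_{T-}}\Delta\langle S,q^x\rangle_T|_{x=L}\neq0\}$, and your ``measurable selection'' remark does not address this; nor does your alternative route, which simply restates the desired conclusion. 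Even granting $\Delta\bar S^{\GG}\neq0$ pointwise, in the vector-valued case $(I-\Phi_t)\Delta S^{\GG}_t=\Delta\bar S^{\GG}_t$ with both sides nonzero does not yield invertibility of $I-\Phi_t$ on the relevant subspace, and one must also verify $\psi\in L_m(\bar S^{\GG};\prob,\GG)$. None of this is impossible, but it amounts to re-proving from scratch the Girsanov-invariance of the strong PRP in this setting, which is precisely what the paper imports as \cite[Theorem 13.12]{MR1219534}. A minor further point: you invoke Theorem~\ref{thm:main}, which carries a separability hypothesis absent from the corollary; this is harmless because that hypothesis is only used for the $\eta^x$-term that vanishes here, but you should say so.
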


We can observe that, while Corollary \ref{cor:no_jump} relates the martingale representation property to a change of num\'eraire (see Remark \ref{rem:num}), Corollary \ref{cor:no_jump_2} relates the martingale representation property to a locally equivalent change of measure, as made explicit by the proof given in Section \ref{proofs_main}.

\subsection{Two examples}	\label{sec:example}

We now present two simple examples of processes having the strong predictable representation property in the initially enlarged filtration $\GG$. For simplicity, we consider a fixed time horizon $T<\infty$. In both examples, Assumption \ref{ass:Jacod} is satisfied but the equivalence $\nu_t\sim\lambda$  does not hold. Hence, the following martingale representation results are not covered by the existing literature. We first present an example where the conditional densities can reach zero due to a jump and then an example where the conditional densities can only reach zero continuously (i.e., $\prob(\eta^x<\infty)=0$ for $\lambda$-a.e. $x\in E$). 
In a financial context, with $S$ representing the price of a risky asset (see Section \ref{sec:hedging}), these two examples admit interesting interpretations in relation to the modeling of insider information and have been considered in  \cite{AFK} and \cite{CT15}, respectively.

\subsubsection{An example based on the Poisson process}	\label{sec:poisson}

Let $N=(N_t)_{t\in[0,T]}$ be a standard Poisson process and $\FF=(\F_t)_{t\in[0,T]}$ its $\prob$-augmented natural filtration, denoting by $\{\tau_n\}_{n\in\N}$ the jump times of $N$. We let $\A=\F_T$ and consider the random variable $L=N_T$ together with the corresponding initially enlarged filtration $\GG$. Similarly as in \cite[Example 1.5.3]{AFK} (see also \cite[\textsection 4.3]{GVV06}), it can be easily checked that, for every $n\in\N$,
\[
q^n_t = \frac{\prob(L=n|\F_t)}{\prob(L=n)}
= e^t\frac{(T-t)^{n-N_t}}{T^n}\frac{n!}{(n-N_t)!}\ind_{\{N_t\leq n\}},
\qquad\text{ for all }t<T,
\]
and $q^n_T=e^TT^{-n}n!\ind_{\{N_T=n\}}$, thus showing that Assumption \ref{ass:Jacod} is satisfied. Observe also that the conditional density $(q^n_t)_{t\in[0,T]}$ jumps to zero at $\tau_{n+1}$ (if $\tau_{n+1}\leq T$), meaning that $\eta^n=\tau_{n+1}$, for all $n\in\N$. 

The compensated Poisson process $S:=(N_t-t)_{t\in[0,T]}$ has the strong predictable representation property on $\basisp$ (see e.g. \cite[Proposition 8.3.5.1]{JYC09}). Hence, the assumptions of Theorem \ref{thm:main} are satisfied and the process $S^{\GG}=(S^{\GG}_t)_{t\in[0,T]}$ defined in \eqref{eq:dec_Aks} has the strong predictable representation property on $\basisgp$. Moreover, $S^{\GG}$ can be explicitly represented as follows:
\be	\label{eq:SG_explicit}
S^{\GG}_t = (N_t-t)
- \sum_{n=1}^{N_T}\ind_{\{\tau_n\leq t\}}\left(1-\frac{T-\tau_n}{N_T-n+1}\right)
+ (t-\tau_{N_T}\wedge t).
\ee
Indeed, noting that $[S,q^L]=\sum_{0<u\leq\cdot}\Delta N_u\Delta q^L_u=\sum_{n=1}^{N_T}\Delta q^L_{\tau_n}\ind_{\{\tau_n\leq\cdot\}}$ and that
\[
\Delta q^L_{\tau_n}
= e^{\tau_n}\frac{(T-\tau_n)^{N_T-n}}{T^{N_T}}\frac{N_T!}{(N_T-n)!}\left(1-\frac{T-\tau_n}{N_T-n+1}\right),
\]
it holds that
\[
\frac{1}{q^L}\cdot[S,q^L]
= \sum_{n=1}^{N_T}\ind_{\{\tau_n\leq \cdot\}}\left(1-\frac{T-\tau_n}{N_T-n+1}\right).
\]
Moreover, observe that $\Delta S_{\eta^n}\ind_{\dbra{\eta^n,T}}=\ind_{\dbra{\tau_{n+1},T}}$, for all $n\in\N$. Hence, in view of Proposition \ref{prop:Aks}, in order to prove \eqref{eq:SG_explicit} it remains to compute the dual $\FF$-predictable projection of the increasing process $(\ind_{\{\tau_{n+1}\leq t\}})_{t\in[0,T]}$. Observe that $\{\tau_{n+1}\leq t\}=\{N_t\geq n+1\}$, so that $\ind_{\{\tau_{n+1}\leq t\}}=f^n(N_t)$, with $f^n(x):=\ind_{\{x\geq n+1\}}$.
By \cite[Proposition 8.2.3.1]{JYC09}, the infinitesimal generator $\mathcal{L}$ of $N$ is given by $\mathcal{L}(f)(\cdot)=f(\cdot+1)-f(\cdot)$, for any bounded measurable function $f$. Hence, the process
\[	\ba
f^n(N_t) - \int_0^{t}\mathcal{L}(f^n)(N_u)\ud u
&= \ind_{\{\tau_{n+1}\leq t\}} - \int_0^{t}\ind_{\{n\leq N_u< n+1\}}\ud u
= \ind_{\{\tau_{n+1}\leq t\}} - (\tau_{n+1}\wedge t-\tau_n\wedge t)
\ea	\]
is a local martingale on $\basisp$, for each $n\in\N$. By the Doob-Meyer decomposition of the bounded submartingale $(\ind_{\{\tau_{n+1}\leq t\}})_{t\in[0,T]}$, it is also a uniformly integrable martingale on $\basisp$ (see e.g. \cite[Theorem I.3.15]{MR1943877}). By \cite[Corollary 5.31]{MR1219534}, this implies that $(\tau_{n+1}\wedge\cdot-\tau_n\wedge\cdot)$ is the dual $\FF$-predictable projection of $(\ind_{\{\tau_{n+1}\leq t\}})_{t\in[0,T]}$, for all $n\in\N$, thus proving \eqref{eq:SG_explicit}.

\subsubsection{An example based on the Brownian motion}

As in \cite[Section 5.1]{CT15}, let define the process $S=(S_t)_{t\in[0,T]}$ by $S:=1+W$, where $W=(W_t)_{t\in[0,T]}$ is a standard Brownian motion and $\FF=(\F_t)_{t\in[0,T]}$ its $\prob$-augmented natural filtration. We let $\A=\F_T$ and consider the discrete random variable $L=\ind_{\{\inf_{t\in[0,T]}S_t>0\}}$. Since $L$ is a discrete random variable, Assumption \ref{ass:Jacod} holds (see e.g. \cite[Corollary 2 to Theorem VI.10]{MR1037262}) and, following \cite{CT15}, we can compute
\[	\ba
q^1_t &= \frac{\prob(\inf_{u\in[0,T]}S_u>0|\F_t)}{\prob(\inf_{u\in[0,T]}S_u>0)}	
= 1+\frac{1}{\prob(\inf_{u\in[0,T]}S_u>0)}\sqrt{\frac{2}{\pi}}\int_0^{\sigma\wedge t}\frac{1}{\sqrt{T-s}}e^{-\frac{S^2_s}{2(T-s)}}\ud W_s,\\
q^0_t &= \frac{\prob(\inf_{u\in[0,T]}S_t\leq0|\F_t)}{\prob(\inf_{u\in[0,T]}S_u\leq0)}
= 1-\frac{1}{\prob(\inf_{u\in[0,T]}S_u\leq0)}\sqrt{\frac{2}{\pi}}\int_0^{\sigma\wedge t}\frac{1}{\sqrt{T-s}}e^{-\frac{S^2_s}{2(T-s)}}\ud W_s,
\ea	\]
where $\sigma:=\inf\{t\in\R_+ \such S_t=0\}$.
Observe that $q^1$ and $q^0$ can reach zero in a continuous way with a strictly positive probability. Since $W$ has the strong predictable representation property on $\basisp$, the assumptions of Corollary  \ref{cor:no_jump_2} are satisfied. Noting that $q^L_t=q^1_t\ind_{\{\sigma>T\}}+q^0_t\ind_{\{\sigma\leq T\}}$,  the strong predictable representation property on $\basisgp$ holds with respect to the process
\[	\ba
\bar{S}^{\GG}_t &= S_t - \int_0^t\frac{1}{q^L_s}\ud\langle S,q^x\rangle^{\FF}_s\bigr|_{x=L}	\\
&= S_t 
- \frac{\ind_{\{\inf_{u\in[0,T]}S_u>0\}}}{\prob(\inf_{u\in[0,T]}S_u>0)}\sqrt{\frac{2}{\pi}}\int_0^t\frac{1}{q^1_s\sqrt{T-s}}e^{-\frac{S^2_s}{2(T-s)}}\ud s	\\
&\quad+ \frac{\ind_{\{\inf_{u\in[0,T]}S_u\leq0\}}}{\prob(\inf_{u\in[0,T]}S_u\leq0)}\sqrt{\frac{2}{\pi}}\int_0^{\sigma\wedge t}\frac{1}{q^0_s\sqrt{T-s}}e^{-\frac{S^2_s}{2(T-s)}}\ud s.
\ea	\]

\section{Hedging under insider information}	\label{sec:hedging}

In this section, we study the implications of the martingale representation property  in the context of an abstract financial market with insider information. For simplicity of presentation, we consider a fixed time horizon $T<\infty$ and an $\Real^d$-valued local martingale $S=(S_t)_{t\in[0,T]}$ having the strong predictable representation property on $\basisp$. In the present section, we also assume that the initial $\sigma$-field $\F_0$ is trivial.
As in Section \ref{subsec:setting}, we consider a random variable $L$ together with the associated initially enlarged filtration $\GG$ and suppose that Assumption \ref{ass:Jacod} is satisfied, so that the family $\{q^x: x\in E\}$ is well-defined and $\A:=\G_T=\F_T\vee\sigma(L)$ (see Lemma \ref{lem:RC_filtr}). 

The interpretation of the above setting is as follows. The process $S$ represents the prices (discounted with respect to some baseline security) of $d$ risky assets traded in the market and the filtration $\FF$ represents the information publicly available to every market participant. On the contrary, the random variable $L$ represents an additional information (\emph{insider information}) which is only available to some better informed agents, having access to the information flow $\GG$. The better informed agents are allowed to trade on the same set of securities as the uninformed agents but can rely on their private information when choosing their strategies.
For $\mathbf{H}\in\{\FF,\GG\}$, we denote by $\mathcal{H}(\mathbf{H})$ the set of admissible strategies based on the information flow $\mathbf{H}$, i.e.,
\[
\mathcal{H}(\mathbf{H}) := \bigl\{H\in L(S;\prob,\mathbf{H}) : (H\cdot S)_t\geq-a \text{ $\prob$-a.s. for all }t\in[0,T],\text{ for some }a\in\Real_+\bigr\},
\]
which amounts to exclude trading strategies requiring an unlimited line of credit.

One of the fundamental problems in mathematical finance is represented by the hedging of contingent claims. We interpret any $\A$-measurable non-negative random variable $\xi$ as a \emph{contingent claim}. The hedging problem, with respect to a filtration $\mathbf{H}\in\{\FF,\GG\}$, consists in finding a strategy $H\in\mathcal{H}(\mathbf{H})$ such that $\xi=v^{\mathbf{H}}(\xi)+(H\cdot S)_T$ holds $\prob$-a.s., for some initial wealth $v^{\mathbf{H}}(\xi)$. If this is possible, then the strategy $H$ is said to \emph{replicate} the contingent claim $\xi$ and $v^{\mathbf{H}}(\xi)$ represents the initial cost of replicating $\xi$ having access to the information flow $\mathbf{H}$. If every bounded contingent claim can be replicated, then the financial market is said to be \emph{complete}.
Besides its own interest, the hedging problem has fundamental applications in mathematical finance, notably in the context of pricing and portfolio optimization.\footnote{In a forthcoming paper, we shall apply the present results to utility maximization problems in the presence of insider information which can generate arbitrage opportunities, by relying on the duality approach of \cite{CCFM}.}

Since $S$ has the strong predictable representation property on $\basisp$, it is well-known that every contingent claim $\xi\in L^1_+(\F_T)$ can be replicated. Indeed, it suffices to consider the non-negative $\FF$-martingale $M=(M_t)_{t\in[0,T]}$ defined by $M_t:=\expec[\xi|\F_t]$, for all $t\in[0,T]$. Then, in view of Definition \ref{def:PRP}, there exists an $\Real^d$-valued process $H\in L_m(S;\prob,\FF)$ such that $\xi=M_T=M_0+(H\cdot S)_T$ $\prob$-a.s. Moreover, it holds that $(H\cdot S)_t=M_t-M_0\geq-\expec[\xi]$, thus showing that $H\in\mathcal{H}(\FF)$. The initial cost of replicating $\xi$ is given by $v^{\FF}(\xi)=M_0=\expec[\xi]$. This solves the hedging problem of an $\F_T$-measurable contingent claim from the perspective of an uninformed agent having access to the information flow $\FF$. Clearly, there does not exist in general an $\FF$-predictable hedging strategy for a $\G_T$-measurable contingent claim.

The following proposition is the central result of this section and shows that Assumption \ref{ass:Jacod} together with the completeness of the financial market based on $\basisp$ suffices to ensure that the financial market based on $\basisgp$ is also complete (up to a $\sigma(L)$-measurable initial wealth).

\begin{prop}	\label{prop:hedging}
Suppose that Assumption \ref{ass:Jacod} holds and that $S=(S_t)_{t\in[0,T]}$ has the strong predictable representation property on $\basisp$. Let $\xi$ be a bounded non-negative $\G_T$-measurable random variable. Then there exists a strategy $H\in\mathcal{H}(\GG)$ which replicates $\xi$ with initial cost $v^{\GG}(\xi)=\expec[\xi/q^L_T|\sigma(L)]$.
\end{prop}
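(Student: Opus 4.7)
The plan is to reduce the $\GG$-hedging of $\xi$ to a parameterized family of $\FF$-hedging problems indexed by the realization $x$ of $L$, and then to aggregate these parameterized hedgers into a single $\GG$-predictable strategy via the parameter-dependent integration of \cite{SY}, in the same spirit as the proof of Theorem~\ref{thm:main}.

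First, I would disintegrate $\xi$. Since $\G_T = \F_T \vee \sigma(L)$ and $E$ is Lusin, a standard monotone class argument produces a bounded $(\cE \otimes \F_T)$-measurable function $(x,\omega) \mapsto \xi^x(\omega)$ with $\xi = \xi^L$ $\prob$-a.s.\ and $0 \leq \xi^x \leq C$ uniformly in $x$, where $C$ is a bound on $\xi$. Setting $\bar\xi^x := \xi^x \ind_{\{q^x_T > 0\}}$, the fact that $\prob(q^L_T > 0) = 1$ (recalled after Lemma~\ref{lem-Jac}) gives $\bar\xi^L = \xi$ $\prob$-a.s. The Bayes identity arising from $\nu_T(\ud x) = q^x_T\,\lambda(\ud x)$, applied to $\xi^x/q^x_T\,\ind_{\{q^x_T > 0\}}$, then yields
\[
\expec\bigl[\xi/q^L_T \bigm| L = x\bigr] \;=\; \expec\bigl[\xi^x \ind_{\{q^x_T > 0\}}\bigr] \;=\; \expec[\bar\xi^x] \;=:\; \phi(x), \qquad \lambda\text{-a.e.\ } x \in E,
\]
so the claimed initial wealth is $v^{\GG}(\xi)(\omega) = \phi(L(\omega))$.

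Next I would invoke the strong predictable representation property of $S$ on $\basisp$ parameterwise: for each $x \in E$, there exists $\bar H^x \in L_m(S;\prob,\FF)$ such that
\[
\bar\xi^x \;=\; \phi(x) + (\bar H^x \cdot S)_T \qquad \prob\text{-a.s.}
\]
By the measurable-selection machinery of \cite{SY} (the same separability-based argument that underlies Proposition~\ref{prop:Aks}, cf.\ Remark~\ref{rem:sep}), one may choose a $(\cE \otimes \cP(\FF))$-measurable version of $(x,\omega,t) \mapsto \bar H^x_t(\omega)$. Because the associated $\FF$-martingale $M^x_t := \phi(x) + (\bar H^x \cdot S)_t = \expec[\bar\xi^x \mid \F_t]$ satisfies $0 \leq M^x_t \leq C$, one obtains the uniform bound $|(\bar H^x \cdot S)_t| \leq C$ on $E \times \Omega \times [0,T]$. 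Since $L$ is $\G_0$-measurable, the pasted process $\bar H_t(\omega) := \bar H^{L(\omega)}_t(\omega)$ is $\cP(\GG)$-measurable, and the parameter-dependent stochastic integration of \cite{SY} yields
\[
(\bar H \cdot S)_t \;=\; (\bar H^x \cdot S)_t\bigr|_{x = L}, \qquad t \in [0,T].
\]
Evaluating at $t = T$ and using $\bar\xi^L = \xi$ $\prob$-a.s.\ gives $\xi = v^{\GG}(\xi) + (\bar H \cdot S)_T$, while the uniform bound transfers to $|(\bar H \cdot S)_t| \leq C$ $\prob$-a.s.\ for all $t$, so $\bar H \in \mathcal{H}(\GG)$.

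The hard part will be the measurable-selection step: producing a jointly $(\cE \otimes \cP(\FF))$-measurable version of the parameterized integrands $\bar H^x$ and verifying that the stochastic integral commutes with the substitution $x = L$. These are precisely the technical ingredients that underpin Theorem~\ref{thm:main} and Proposition~\ref{prop:Aks}, and are supplied by \cite{SY}. A secondary subtlety, the indeterminacy of $\xi^x/q^x_T$ on $\{q^x_T = 0\}$, is absorbed into the truncation $\bar\xi^x = \xi^x \ind_{\{q^x_T > 0\}}$ and is harmless because $\prob(q^L_T > 0) = 1$.
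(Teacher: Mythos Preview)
Your argument is correct and, once unfolded, coincides with the paper's. The paper simply packages the core step differently: it considers the $\GG$-martingale $M_t=\expec[\xi/q^L_T\mid\G_t]$ and invokes Proposition~\ref{lem:repr_1} to obtain $q^L_tM_t=M_0+(K^L\cdot S)_t$, which at $t=T$ gives $\xi=M_0+(K^L\cdot S)_T$ with $M_0=\expec[\xi/q^L_T\mid\sigma(L)]$. Under the hood, Proposition~\ref{lem:repr_1} is proved exactly by your scheme (disintegrate via Proposition~\ref{prop:mart}, represent the family $(m^x_tq^x_t)_{t\geq0}$ simultaneously via Proposition~\ref{prop:mr}, then substitute $x=L$), and in the present situation $m^x_tq^x_t=\expec[\bar\xi^x\mid\F_t]$, so the paper's $K^x$ is your $\bar H^x$. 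Your admissibility bound $|(\bar H^x\cdot S)_t|\leq C$ is slightly sharper than the paper's, which routes the bound through the $\GG$-supermartingale property of $1/q^L$.

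One imprecision: the jointly measurable choice of $\bar H^x$ does \emph{not} come from the separability argument behind Proposition~\ref{prop:Aks} (that concerns dual predictable projections). The relevant result is Proposition~\ref{prop:mr} (or its alternative, Proposition~\ref{prop:alt}), where the joint measurability of the integrand is obtained either via the product-space PRP of Lemma~\ref{lem:PRP_prod} or via the explicit construction in \cite[Theorem~4.80]{MR542115}. A bare invocation of ``measurable selection from \cite{SY}'' would not by itself produce a version satisfying $\bar H^x\in L_m(S;\prob,\FF)$ for \emph{every} $x\in E$; that is the content of those propositions.
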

\begin{proof}
Let $M=(M_t)_{t\in[0,T]}$ be the non-negative $\GG$-martingale defined by $M_t:=\expec[\xi/q^L_T|\G_t]$, for all $t\in[0,T]$.  By Proposition \ref{lem:repr_1} below (recalling that $q^x_0=1$ for all $x\in E$, since in this section $\F_0$ is assumed to be trivial), there exists a $\GG$-predictable process $K^L$ such that
\[
q^L_t\expec\left[\frac{\xi}{q^L_T}\Bigr|\G_t\right]
= q^L_tM_t
= M_0 + (K^L\cdot S)_t,
\qquad\text{ $\prob$-a.s. for all }t\in[0,T],
\]
where $K^L\in L(S;\prob,\GG)$. Since $\xi\geq0$ $\prob$-a.s., it holds that $(K^L\cdot S)_t\geq-M_0$ $\prob$-a.s. for all $t\in[0,T]$. Moreover, the boundedness of $\xi$ together with the supermartingale property of the process $(1/q^L_t)_{t\in[0,T]}$ (see \cite[Proposition 3.4]{AFK} or \cite[Lemma 5]{ACJ15}) yields that $M_0\leq C$ $\prob$-a.s., for some $C\in\Real_+$, thus showing  that $K^L\in\mathcal{H}(\GG)$. Evaluating the above expression for $t=T$ yields $\xi = \expec[\xi/q^L_T|\sigma(L)]+(K^L\cdot S)_T$ $\prob$-a.s., thus proving the claim. 
\end{proof}

In particular, the above proposition can be applied to an $\F_T$-measurable contingent claim $\xi$. In this case, both for the uninformed agent and for the informed agent there exists a hedging strategy. However, since the two agents have access to different information flows ($\FF$ and $\GG$, respectively), the hedging strategy is not necessarily the same and the initial cost of replicating $\xi$ will depend on the available information. This is the content of the following corollary, which shows that the better informed agent can always take advantage of the insider information and replicate any $\F_T$-measurable contingent claim at a lower cost.

\begin{cor}
Suppose that Assumption \ref{ass:Jacod} holds and that $S=(S_t)_{t\in[0,T]}$ has the strong predictable representation property on $\basisp$.
Then, for every bounded non-negative $\F_T$-measurable random variable $\xi$, it holds that $v^{\GG}(\xi)\leq v^{\FF}(\xi)$.
\end{cor}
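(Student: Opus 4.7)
The plan is to identify both $v^{\FF}(\xi)$ and $v^{\GG}(\xi)$ in closed form and then compare them. Any $\F_T$-measurable random variable is a fortiori $\G_T$-measurable, so Proposition~\ref{prop:hedging} applies and gives $v^{\GG}(\xi)=\expec[\xi/q^L_T\mid\sigma(L)]$. On the other hand, the discussion preceding Proposition~\ref{prop:hedging} shows that for any $\xi\in L^1_+(\F_T)$ the $\FF$-replication cost equals $v^{\FF}(\xi)=\expec[\xi]$. Both expressions are well-defined since $\xi$ is bounded and $\prob(\zeta^L<\infty)=0$ ensures $q^L_T>0$ $\prob$-a.s.

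Next, I would compute $\expec[\xi/q^L_T\mid\sigma(L)]$ by testing against bounded $\sigma(L)$-measurable random variables of the form $h(L)$. Applying the tower property with respect to $\F_T$ and using the fact that the regular $\F_T$-conditional law of $L$ is $\nu_T(\omega;\ud x)=q^x_T(\omega)\,\lambda(\ud x)$, one obtains
\[
\expec\bra{h(L)\,\xi/q^L_T}
\;=\;\expec\bra{\xi \int_E \frac{h(x)}{q^x_T}\,q^x_T\,\lambda(\ud x)}
\;=\;\int_E h(x)\,\expec\bra{\xi\,\ind_{\{q^x_T>0\}}}\lambda(\ud x),
\]
where the cancellation of $q^x_T$ with its inverse leaves the integrand equal to $h(x)\,\ind_{\{q^x_T>0\}}$ (with the usual convention $0\cdot\infty=0$ on the set $\{q^x_T=0\}$, where the density contributes no mass). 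By uniqueness of conditional expectations, this yields the explicit formula $\expec[\xi/q^L_T\mid\sigma(L)]=f(L)$ $\prob$-a.s., with $f(x)\dfn\expec[\xi\,\ind_{\{q^x_T>0\}}]$.

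The conclusion is then immediate from the non-negativity of $\xi$: for every $x\in E$ one has $f(x)\leq\expec[\xi]$, so substituting $x=L$ gives $v^{\GG}(\xi)=f(L)\leq\expec[\xi]=v^{\FF}(\xi)$ $\prob$-a.s. I do not foresee any genuine obstacle in this argument: the entire content pertaining to initial enlargement is already encapsulated in Proposition~\ref{prop:hedging}, and the only point that calls for a mild amount of care is the correct interpretation of the division by $q^L_T$, which is handled automatically by the indicator $\ind_{\{q^x_T>0\}}$ appearing after integrating $1/q^x_T$ against $\nu_T(\omega;\ud x)$.
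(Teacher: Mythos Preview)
Your proposal is correct and follows essentially the same route as the paper: identify $v^{\GG}(\xi)=\expec[\xi/q^L_T\mid\sigma(L)]$ via Proposition~\ref{prop:hedging}, compute this conditional expectation by testing against $h(L)$ and using the conditional density to obtain $\expec[\xi\,\ind_{\{q^x_T>0\}}]\bigr|_{x=L}$, then compare with $\expec[\xi]=v^{\FF}(\xi)$. The only cosmetic difference is that the paper invokes the ready-made identity \eqref{expec_init} directly, whereas you re-derive it via the tower property and the conditional law $\nu_T(\ud x)=q^x_T\,\lambda(\ud x)$.
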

\begin{proof}
Let $g:E\rightarrow\Real$ be a bounded $\cE$-measurable function. Formula \eqref{expec_init} below then gives
\[
\expec\left[g(L)\frac{\xi}{q^L_T}\right]
= \expec\left[g(L)\frac{\xi}{q^L_T}\ind_{\{q^L_T>0\}}\right]
=\int_Eg(x)\expec[\xi\ind_{\{q^x_T>0\}}]\lambda(\ud x)
= \expec\left[g(L)\expec\bigl[\xi\ind_{\{q^x_T>0\}}\bigr]\bigr|_{x=L}\right],
\]
where the expectation $\expec[\xi\ind_{\{q^x_T>0\}}]$ is $\cE$-measurable by Fubini's theorem. By the arbitrariness of the function $g(\cdot)$, this implies that $\expec[\xi/q^L_T|\sigma(L)]=\expec[\xi\ind_{\{q^x_T>0\}}]|_{x=L}$. The claim then follows by Proposition \ref{prop:hedging}, noting that $\expec[\xi\ind_{\{q^x_T>0\}}]\leq \expec[\xi]=v^{\FF}(\xi)$, for every $x\in E$.
\end{proof}

In the context of the above corollary, for a given contingent claim $\xi$, the difference between the two values $v^{\FF}(\xi)$ and $v^{\GG}(\xi)$ can be regarded as a monetary value of the additional information contained in the random variable $L$ when replicating $\xi$.

\begin{rem}[On the possibility of arbitrage]	\label{rem:arb}
Since $S\in\Mloc(\prob,\FF)$, the market where $S$ is traded on the basis of the information flow $\FF$ does not admit arbitrage (in the sense of \emph{no free lunch with vanishing risk}, see \cite{MR1304434}). However, when the filtration is enlarged to $\GG$, the process $S$ might allow for free lunches with vanishing risk or even arbitrages of the first kind (as shown in \cite[Section 1.5.3]{AFK}, this is for instance the case in the example considered in Section \ref{sec:poisson}). As shown in \cite[Theorem 1.12]{AFK} and \cite[Theorem 6]{ACJ15}, the condition $\prob(\eta^x<\infty)=0$ for $\lambda$-a.e. $x\in E$ appearing in Corollaries \ref{cor:no_jump} and \ref{cor:no_jump_2} acts as a necessary and sufficient condition in order to exclude arbitrages of the first kind in the initially enlarged filtration $\GG$ for any semimartingale $S$.
However, we want to remark that the results of this section do not depend on the presence of arbitrage in $\GG$, meaning that the market can be complete even when insider information yields arbitrage.
\end{rem}

\section{Proofs and auxiliary results}		\label{proofs}

In this section, we give the proofs of the results stated in Section \ref{results} together with several auxiliary results.
We start by proving Lemma \ref{lem-Jac}, following \cite[Appendix A.1]{Amen}, which shows the existence of a good version of the conditional densities of $L$.

\begin{proof}[Proof of Lemma \ref{lem-Jac}]
By  \cite[Lemma 1.8]{Jac85}, Assumption \ref{ass:Jacod} implies the existence of an $\cO(\oFF)$-measurable nonnegative function $(x,\omega,t)\mapsto\tilde{q}^x_t(\omega)$ such that (i)-(ii) hold. Since, for every $x\in E$, the process $\tilde{q}^x$ is $\FF$-optional, being $\FF$-adapted and c\`adl\`ag,   \cite[Remark 1 after Proposition 3]{SY} gives the existence of a $\pare{\cE\otimes\cO(\FF)}$-measurable function $(x,\omega,t)\mapsto q^x_t(\omega)$ such that $(q^x_t)_{t\geq0}$ is indistinguishable from $(\tilde{q}^x_t)_{t\geq0}$, for every $x\in E$.
\end{proof}

The following consequence of Lemma~\ref{lem-Jac} will be used several times: for any $t\in\Real_+$ and $\pare{\cE\otimes\F_t}$-measurable function $E \times \Omega \ni (x, \omega)\mapsto f^x_t(\omega) \in \Real_+$, it holds that
\be	\label{expec_init}
\expec\left[f_t^L\right] 
= \expec\left[\int_Ef_t^x\,q^x_t\,\lambda(\ud x)\right]
= \int_E\expec\left[f_t^x\,q^x_t\right]\lambda(\ud x).
\ee
As usual, equality \eqref{expec_init} can be extended to integrable $\pare{\cE\otimes\F_t}$-measurable functions.

\subsection{An outline of the proof of Theorem \ref{thm:main}}	\label{outline}

The proof of Theorem \ref{thm:main} is rather technical, requires several  auxiliary results and needs to deal with some measurability issues. However, since the underlying ideas are rather simple and transparent, let us give an outline of the proof (referring to the following subsections for full details).

The first step consists in showing that Assumption \ref{ass:Jacod} already implies the right-continuity of the filtration $(\G^0_t)_{t\geq0}$, thereby extending \cite[Proposition 3.3]{Amen} (see Lemma \ref{lem:RC_filtr}). Together with formula \eqref{expec_init}, this property can be shown to imply that every process $M=(M_t)_{t\geq0}\in\M(\prob,\GG)$ can be represented as $M_t=m^L_t$, where $(x,\omega,t)\mapsto m^x_t(\omega)$ is a $(\cE\otimes\cO(\FF))$-measurable function such that $(m^x_tq^x_t)_{t\geq0}\in\M(\prob,\FF)$, for all $x\in E$ (see Proposition \ref{prop:mart}).

At this stage, if the process $S=(S_t)_{t\geq0}$ is assumed to have the strong predictable representation property on $\basisp$, this suggests to represent the $\FF$-martingales $(m^x_tq^x_t)_{t\geq0}$ and $(q^x_t)_{t\geq0}$, for each $x\in E$, as $m^x_tq^x_t=m^x_0q^x_0 + (K^x\cdot S)_t$ and $q^x_t=q^x_0+(H^x\cdot S)_t$, respectively, where $K^x$ and $H^x$ are suitable $\FF$-predictable processes, for each $x\in E$. 
Provided that the stochastic integrals $K^x\cdot S$ and $H^x\cdot S$ are measurable in $x$ and make sense in the initially enlarged filtration $\GG$, one can then evaluate the two stochastic integral representations at $x=L$. Since $q^L_t>0$ $\prob$-a.s. (see \cite[Corollary 1.11]{Jac85}), one can finally apply the integration by parts formula to $M_t=(m^L_tq^L_t)/q^L_t$ and obtain a stochastic integral representation for $M$ in $\GG$.

The main problem in the above argument is that, by applying the strong predictable representation property of $S$ to $(m^x_tq^x_t)_{t\geq0}$ and $(q^x_t)_{t\geq0}$ \emph{separately for each $x\in E$}, the $\FF$-predictable processes $K^x$ and $H^x$ appearing in the integral representations above have a priori no measurability properties with respect to $x$. Hence, one has to perform a martingale representation of $(m^x_tq^x_t)_{t\geq0}$ and $(q^x_t)_{t\geq0}$ which holds \emph{simultaneously for all} $x\in E$. To this effect, inspired by the recent paper \cite{EsmImk}, we shall work on the product space $(\widehat{\Omega},\oFF,\oPP)$ and establish a martingale representation theorem on that level (see Lemma \ref{lem:PRP_prod} and Proposition \ref{prop:mr}), thus ensuring nice measurability properties for the integrands $K^x$ and $H^x$.
One can then go back to the original space $\basisp$ and combine the results of \cite{SY} and \cite{Jac85} to prove that all stochastic integrals admit $x$-measurable versions and make sense in the initially enlarged filtration $\GG$. At the final step, we evaluate the integral representations at $x=L$ and perform an integration by parts. Together with Proposition \ref{prop:Aks}, this will give the strong predictable representation property of $S^{\GG}$ on $\basisgp$.

\begin{rem}	\label{rem:Jacod}
In the seminal paper \cite{Jac85}, the author established a (weak) martingale representation result which holds simultaneously for all $\{q^x : x\in E\}$ 
(see \cite[Proposition 3.14]{Jac85}). As detailed in the Appendix, one can apply a similar reasoning in the present setting, avoiding the introduction of the product space $(\widehat{\Omega},\oFF,\oPP)$, at the expense of relying on more sophisticated tools (notably, stochastic integration with respect to random measures and fine properties of martingale representation, as in \cite[\textsection IV.4d]{MR542115}). In comparison, our approach only uses elementary notions of stochastic calculus and basic facts on the strong predictable representation property.
\end{rem}

\subsection{The structure of the initially enlarged filtration $\GG$}	\label{subsec1}

As a preparation to the proof of the main results, we first establish some preliminary properties of the enlarged filtration $\GG$.

The first result concerns the right-continuity of the filtration $\GG^0$. 
If Assumption \ref{ass:Jacod} is replaced by the stronger assumption that $\nu_t\sim\lambda$ $\PP$-a.s. for all $t\in[0,T]$, for some fixed horizon $T<\infty$, then it is well-known that the filtration $\GG^0$ is right-continuous, see \cite[Proposition 3.3]{Amen} and \cite[Lemma 2.2]{ABS}. The following lemma extends this result and shows that Assumption \ref{ass:Jacod} actually suffices to ensure the right-continuity of $\GG^0$. \footnote{The fact that Assumption \ref{ass:Jacod} suffices to ensure the right-continuity of the filtration $\GG^0$ has been already remarked in \cite{GVV06}. However, the authors did not provide a proof of their claim. Our proof of Lemma \ref{lem:RC_filtr} is inspired from \cite[Lemma 6.8]{Song14}.} 

\begin{lem}	\label{lem:RC_filtr}
Suppose that Assumption \ref{ass:Jacod} holds. Then $\GG^0=\GG$.
\end{lem}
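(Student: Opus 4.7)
The plan is to reduce right-continuity of $\GG^0$ to a concrete conditional-expectation computation that can be read off from Lemma \ref{lem-Jac} and Jacod's formula \eqref{expec_init}. Since each $\F_t$ already contains every $\prob$-null set of $\A$, so does $\G^0_t=\F_t\vee\sigma(L)$, and the identity $\GG^0=\GG$ reduces to proving the inclusion $\G^0_{t+}:=\bigcap_{s>t}\G^0_s\subseteq\G^0_t$ for every $t\geq 0$. Observing that $\ind_A=\expec[\ind_A\mid\G^0_{t+}]$ for $A\in\G^0_{t+}$, this in turn follows once one knows that $\expec[Y\mid\G^0_{t+}]$ admits a $\G^0_t$-measurable version for every bounded $\F_\infty\vee\sigma(L)$-measurable random variable $Y$.

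First, by a monotone class argument I would reduce to random variables of the form $Y=g(L)\ind_B$ with $g$ bounded Borel on $E$ and $B\in\F_T$ for some $T>t$; since $g(L)$ is $\sigma(L)$-measurable, it pulls out of the conditional expectation, and the question reduces to showing that $\expec[\ind_B\mid\G^0_{t+}]$ is $\G^0_t$-measurable. For $s\in[0,T]$, testing against an arbitrary bounded $\G^0_s$-measurable product $V\,h(L)$ with $V\in\F_s$ and $h$ bounded Borel, and using \eqref{expec_init} together with the $\FF$-martingale property of $q^x$ from Lemma \ref{lem-Jac}, I would derive the explicit Jacod-type formula
\[
\expec[\ind_B\mid\G^0_s]=\left.\frac{\expec[\ind_B\,q^x_T\mid\F_s]}{q^x_s}\right|_{x=L}
\qquad\text{on }\{q^L_s>0\}.
\]

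The next step is to construct a $\GG^0$-adapted, c\`adl\`ag-in-$s$ version of the right-hand side. For each $x\in E$, $N^x\dfn\expec[\ind_B\,q^x_T\mid\F_\cdot]$ is an $\FF$-martingale; an argument parallel to the proof of Lemma \ref{lem-Jac} (combining \cite[Lemma 1.8]{Jac85} with \cite[Remark 1 after Proposition 3]{SY}) produces a $(\cE\otimes\cO(\FF))$-measurable version $(x,\omega,s)\mapsto N^x_s(\omega)$ that is c\`adl\`ag in $s$ for every $x\in E$. Since $q^x$ is itself c\`adl\`ag in $s$ by Lemma \ref{lem-Jac}, and \cite[Corollary 1.11]{Jac85} gives $\prob(\zeta^L<\infty)=0$ so that $q^L_s>0$ for all $s\geq 0$ $\prob$-a.s., the process $\hat M_s\dfn N^L_s/q^L_s$ is well-defined, $\GG^0$-adapted and c\`adl\`ag on $[0,T]$.

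Finally, $\hat M$ is a version of the $\GG^0$-martingale $s\mapsto\expec[\ind_B\mid\G^0_s]$, so Doob's martingale convergence yields $\hat M_{t+\eps}\to\expec[\ind_B\mid\G^0_{t+}]$ as $\eps\downarrow 0$, while right-continuity of $\hat M$ gives $\hat M_{t+\eps}\to\hat M_t$; hence $\expec[\ind_B\mid\G^0_{t+}]=\hat M_t$ $\prob$-a.s., which is $\G^0_t$-measurable, as required. The main obstacle is the joint-measurability step producing a version of $N^x_s$ that is simultaneously c\`adl\`ag in $s$ and measurable in $x$, since only such a version can be evaluated at $x=L$ while preserving the path regularity needed to identify $\hat M_{t+}$ with $\hat M_t$; the remaining steps are standard monotone class and martingale convergence arguments once the Jacod-type formula is in hand.
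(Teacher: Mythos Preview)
Your proposal is correct and follows essentially the same route as the paper's proof: both derive the explicit Jacod-type formula $\expec[\,\cdot\mid\G^0_s]=N^L_s/q^L_s$ via \eqref{expec_init}, construct a jointly measurable c\`adl\`ag version of $(x,s)\mapsto N^x_s$ (the paper cites \cite[Proposition 3]{SY} for this), and then combine right-continuity of $\hat M$ with martingale convergence to identify $\expec[\,\cdot\mid\G^0_{t+}]$ with the $\G^0_t$-measurable $\hat M_t$. The only cosmetic difference is that the paper works directly with general $(\cE\otimes\F_T)$-measurable $\xi(x,\omega)$ rather than first reducing by monotone class to products $g(L)\ind_B$, which slightly streamlines the argument but is not a genuinely different approach.
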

\begin{proof}
Let $\xi:E\times\Omega\rightarrow\Real_+$ be a $\pare{\cE\otimes\F_T}$-measurable function, for some $T\in(0,\infty)$. For any $A_t\in\F_t$, with $t\in[0,T]$, and $g:E\rightarrow\Real$ bounded $\cE$-measurable, using \eqref{expec_init} and recalling that $(q^x_t)_{t\geq0}$ is a non-negative martingale (see Lemma \ref{lem-Jac}), so that the inclusion $\{q^x_T>0\}\subseteq\{q^x_t>0\}$ holds (up to a $\prob$-nullset), for every $x\in E$, we obtain
\[	\ba
\expec\bigl[\xi(L)\ind_{A_t}g(L)\bigr]
&= \int_Eg(x)\expec\bigl[\xi(x)\ind_{A_t}q^x_T\bigr]\lambda(\ud x)
= \int_Eg(x)\expec\bigl[\xi(x)\ind_{A_t}q^x_T\ind_{\{q^x_t>0\}}\bigr]\lambda(\ud x)	\\
&= \int_Eg(x)\expec\left[\ind_{A_t}\expec[\xi(x)q^x_T|\F_t]\frac{q^x_t}{q^x_t}\ind_{\{q^x_t>0\}}\right]\lambda(\ud x)
= \expec\bigl[Y^{(\xi)}_t(L)\ind_{A_t}g(L)\bigr],
\ea	\]
with $Y^{(\xi)}_t(x)\dfn \expec[\xi(x)q^x_T|\F_t]\ind_{\{q^x_t>0\}}/q^x_t$, for all $x\in E$, and where we can take a c\`adl\`ag and $x$-measurable version of the $\FF$-optional projection of the random variable $\xi(x)q^x_T$ (see \cite[Proposition 3]{SY}). Since $A_t$ and $g(L)$ are arbitrary and generate the $\sigma$-field $\G^0_t$ and the random variable $Y^{(\xi)}_t(L)$ is $\G^0_t$-measurable, this shows that $\expec\bigl[\xi(L)|\G^0_t\bigr] = Y^{(\xi)}_t(L)$, for all $t\in[0,T]$.
In particular, it holds that 
\be	\label{eq:RC_proof}
\expec\bigl[\xi(L)|\G^0_{t+\varepsilon}\bigr] = Y^{(\xi)}_{t+\varepsilon}(L),
\qquad\text{ for every }\varepsilon>0.
\ee
By the martingale convergence theorem (see e.g. \cite[Corollary 2.23]{MR1219534}) and the right-continuity of $(Y^{(\xi)}_t(x))_{t\in[0,T]}$, for all $x\in E$, taking the limit for $\varepsilon\searrow0$ in \eqref{eq:RC_proof} yields $\expec\bigl[\xi(L)|\G_t\bigr] = Y^{(\xi)}_t(L)$, for all $t\in[0,T]$. The arbitrariness of $\xi$ and of $T\in(0,\infty)$ then implies that $\G_t=\G^0_t$, for all $t\in\Real_+$.
\end{proof}

In turn, Lemma \ref{lem:RC_filtr} implies a useful characterization of the optional $\sigma$-field associated to the initially enlarged filtration $\GG$, as shown in the following lemma.

\begin{lem}	\label{lem:optional}
Suppose that Assumption \ref{ass:Jacod} holds. Then for every $\GG$-optional process $Z=(Z_t)_{t\geq0}$ there exists a $\pare{\cE\otimes\cO(\FF)}$-measurable function $E\times\Omega\times\Real_+\ni(x,\omega,t)\mapsto z_t^x(\omega)$ such that $Z_t(\omega)=z_t^{L(\omega)}(\omega)$ holds $\prob$-a.s. for all $t\geq0$.
\end{lem}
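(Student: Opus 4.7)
The plan is to proceed by a Dynkin-type monotone class argument on processes. Let $\mathcal{H}$ denote the class of bounded $\cO(\GG)$-measurable processes $Z$ admitting a $\pare{\cE \otimes \cO(\FF)}$-measurable parametrization $z$ with $Z_t(\omega) = z(L(\omega), \omega, t)$ for all $t \in \Real_+$ outside a single $\prob$-null set. The class $\mathcal{H}$ is immediately a vector space containing the constants, and is closed under bounded monotone convergence: if $Z^n \uparrow Z$ with representing $z^n$, then $z(x, \omega, t) \dfn \limsup_n z^n(x, \omega, t)$ is $\pare{\cE \otimes \cO(\FF)}$-measurable and $z^L = Z$ on the complement of the countable union of the $\prob$-null sets attached to each $Z^n$. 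Lemma~\ref{lem:RC_filtr} gives right-continuity of $\GG$, so $\cO(\GG)$ is generated by the $\pi$-system $\{\dbraco{\tau, \infty} : \tau \text{ a } \GG\text{-stopping time}\}$; by the functional monotone class theorem (and truncation in the unbounded case), it therefore suffices to place $\ind_{\dbraco{\tau, \infty}}$ in $\mathcal{H}$ for every $\GG$-stopping time $\tau$.

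Fix such a $\tau$. For each $q \in \QQ_+$, the event $\{\tau \leq q\}$ lies in $\G_q = \F_q \vee \sigma(L)$, so the factorization lemma yields $B_q \in \cE \otimes \F_q$ with $\{\tau \leq q\} = \{(L, \omega) \in B_q\}$ up to a $\prob$-null set; replacing $B_q$ by $\bigcup_{q' \in \QQ_+,\, q' \leq q} B_{q'}$ we arrange that $(B_q)_{q \in \QQ_+}$ is nondecreasing in $q$. Define
\[
\tau^x(\omega) \dfn \inf \set{q \in \QQ_+ \such (x, \omega) \in B_q},
\qquad
h(x, \omega, t) \dfn \ind_{\{t \geq \tau^x(\omega)\}}.
\]
Monotonicity of $(B_q)$ together with right-continuity of $\FF$ give $\{\tau^x \leq q\} = \bigcap_{q' \in \QQ_+,\, q' > q} B_{q'} \in \oF_q$, so $\tau^x$ is an $\oFF$-stopping time on $\widehat{\Omega}$ and $h$ is an $\oFF$-optional process on $\widehat{\Omega} \times \Real_+$. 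Invoking \cite[Remark~1 after Proposition~3]{SY} then furnishes a $\pare{\cE \otimes \cO(\FF)}$-measurable process $z$ such that, for every $x \in E$, the processes $z(x, \cdot, \cdot)$ and $h(x, \cdot, \cdot)$ are $\prob$-indistinguishable.

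It remains to verify $z^L = \ind_{\dbraco{\tau, \infty}}$ $\prob$-a.s. On the $\prob$-conull set where the identities $\{\tau \leq q\} = \{(L, \omega) \in B_q\}$ hold simultaneously for every $q \in \QQ_+$, density of the rationals gives $\tau^{L(\omega)}(\omega) = \tau(\omega)$, hence $h(L, \omega, \cdot) = \ind_{\dbraco{\tau, \infty}}(\omega, \cdot)$. For the exceptional set $D \dfn \{(x, \omega) : z(x, \omega, \cdot) \neq h(x, \omega, \cdot) \text{ somewhere on } \Real_+\}$, the per-$x$ indistinguishability yields $\prob(D_x) = 0$ for every $x$, and a Fubini argument based on \eqref{expec_init} (localized on increasing horizons $\F_T$ with $T \uparrow \infty$) produces $\prob\{\omega : (L(\omega), \omega) \in D\} = 0$. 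I expect the principal technical obstacle to be precisely the measurability upgrade from $\oFF$-optional on the product space to joint $\pare{\cE \otimes \cO(\FF)}$-measurability---this is not automatic and is imported from \cite[Remark~1 after Proposition~3]{SY}---while the remaining pieces (the factorization lemma, right-continuity of $\GG$ from Lemma~\ref{lem:RC_filtr}, and the Fubini reasoning through \eqref{expec_init}) are by that point essentially routine.
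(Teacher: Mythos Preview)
Your proof is correct and takes a genuinely different route from the paper's. The paper observes that $\cO(\GG)$ is generated by processes of the form $\alpha(t)M_t$ with $\alpha$ deterministic and $M\in\M(\prob,\GG)$ (citing \cite[\S XX.22]{DMM}), and then handles each such $M$ by writing $M_t=\expec[M_n\mid\G_t]$ and invoking the explicit conditional-expectation formula $\expec[\xi(L)\mid\G_t]=Y^{(\xi)}_t(L)$ established in the proof of Lemma~\ref{lem:RC_filtr}; this immediately produces the $(\cE\otimes\cO(\FF))$-measurable parametrisation via the density $q^x$. By contrast, you generate $\cO(\GG)$ with stochastic intervals $\dbraco{\tau,\infty}$ and build $\tau^x$ by directly factoring the level sets $\{\tau\le q\}\in\F_q\vee\sigma(L)$; the densities enter only at the very end through the Fubini identity \eqref{expec_init}. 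Your route is more ``bare-hands'' and makes the role of Lemma~\ref{lem:RC_filtr} (needed solely to identify $\G_q$ with $\F_q\vee\sigma(L)$) more transparent, while the paper's route exploits the explicit Bayes-type representation already available and thereby bypasses the $\oFF$-stopping-time construction altogether. Both invoke \cite[Remark~1 after Proposition~3]{SY} for the measurability upgrade.

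One small point to tighten: in your final step you pass from ``$z(x,\cdot,\cdot)$ indistinguishable from $h(x,\cdot,\cdot)$ for every $x$'' to ``$z^L$ indistinguishable from $h^L$'' via the set $D$ and \eqref{expec_init}. The measurability of the projection $D$ is not immediate, and \eqref{expec_init} applies to $(\cE\otimes\F_T)$-measurable integrands. The cleanest fix is to argue pointwise in $t$: for each fixed $t$ the set $\{z^{\cdot}_t\neq h^{\cdot}_t\}$ lies in $\cE\otimes\F_{t+1}$, so \eqref{expec_init} gives $\prob(z^L_t\neq h^L_t)=0$ for every $t$; since $h^L=\ind_{\dbraco{\tau,\infty}}$ is c\`adl\`ag, restricting to rational $t$ and passing to right limits recovers the single-null-set statement you use in defining $\mathcal{H}$ (alternatively, one may simply weaken the definition of $\mathcal{H}$ to the ``for each $t$, $\prob$-a.s.''\ version, which is what the paper itself obtains and which suffices for the monotone class argument).
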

\begin{proof}
If $(Z_t)_{t\geq0}\in\M(\prob,\GG)$, then $\expec[Z_n|\G_t]=Z_t$ holds for all $t\in[0,n]$, for every $n\in\N$. The proof of Lemma \ref{lem:RC_filtr} gives the existence of a $\pare{\cE\otimes\cO(\FF)}$-measurable function $Y^n_t(x)$ such that $Z_t=Y^n_t(L)$ holds $\prob$-a.s. for all $t\in[0,n]$ and for every $n\in\N$. Note that $Y^{n+1}_t(L)=Y^n_t(L)$  $\prob$-a.s. for all $t\in[0,n]$. Hence, letting $z_t^x:=\sum_{n=1}^{\infty}\ind_{[n-1,n)}(t)Y^n_t(x)$, the claim is proved for any $\GG$-martingale $Z$.
Clearly, the result also holds true for any process $(Z_t)_{t\geq0}$ of the form $Z_t=\alpha(t)M_t$, where $\alpha:\Real_+\rightarrow\Real$ and $(M_t)_{t\geq0}\in\M(\prob,\GG)$. By \cite[\textsection XX.22]{DMM}, the $\sigma$-field $\cO(\GG)$ is generated by all processes of this form, thus completing the argument.
\end{proof}

Lemma \ref{lem:optional} together with formula \eqref{expec_init} leads to a useful characterization of $\GG$-martingales in terms of a family of $\FF$-martingales parameterized by $x\in E$. The following proposition is an extension of \cite[Proposition 3.1]{CJZ} to the case where Assumption \ref{ass:Jacod} is satisfied but the equivalence $\nu_t\sim\lambda$ $\prob$-a.s. does not necessarily hold. 

\begin{prop}	\label{prop:mart}
Suppose that Assumption \ref{ass:Jacod} holds.
Then a process $(M_t)_{t\geq0}$ is a $\GG$-martingale if and only if there exists a $\pare{\cE\otimes\cO(\FF)}$-measurable function $E\times\Omega\times\Real_+\ni(x,\omega,t)\mapsto m^x_t(\omega)$ such that $(m^x_tq^x_t)_{t\geq0}\in\M(\prob,\FF)$, for all $x\in E$, and $M_t(\omega)=m_t^{L(\omega)}(\omega)$ holds $\prob$-a.s. for all $t\in\Real_+$.
\end{prop}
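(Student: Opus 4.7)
The plan is to combine the existence result from Lemma \ref{lem:optional} with the change-of-measure identity \eqref{expec_init}, using Lemma \ref{lem:RC_filtr} to shuttle between $\G_s$-conditional expectations and families indexed by $x \in E$.

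For the ``only if'' direction I would start with $M \in \M(\prob, \GG)$. Lemma \ref{lem:optional} immediately produces a $(\cE \otimes \cO(\FF))$-measurable function $(x, \omega, t) \mapsto m^x_t(\omega)$ with $M_t = m^L_t$ $\prob$-a.s., and moreover the construction there (via c\`adl\`ag $\FF$-optional projections, as in the proof of Lemma \ref{lem:RC_filtr}) guarantees c\`adl\`ag paths of $m^x$ in $t$ for every fixed $x$. Only the $\FF$-martingale property of $(m^x_t q^x_t)_{t \geq 0}$ remains to verify. To get this, I would fix $0 \leq s < t$ and test the $\GG$-martingale property of $M$ against $\G_s$-measurable functions of the form $\ind_A g(L)$ with $A \in \F_s$ and $g$ bounded $\cE$-measurable (such functions generate $\G_s$ by Lemma \ref{lem:RC_filtr}). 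Applying \eqref{expec_init} to both sides of $\expec[M_t \ind_A g(L)] = \expec[M_s \ind_A g(L)]$ yields
\[
\int_E g(x) \expec[m^x_t q^x_t \ind_A] \lambda(\ud x) = \int_E g(x) \expec[m^x_s q^x_s \ind_A] \lambda(\ud x).
\]
Ranging $g$ over a countable separating family, $A$ over a countable generator of $\F_s$, and $(s,t)$ over rationals, I would extract a single $\lambda$-nullset $N$ off which the ordinary $\FF$-martingale identity holds; right-continuity of $m^x q^x$ in $t$ combined with a monotone class argument in $A$ then extends it to all $s \leq t$ and $A \in \F_s$. Redefining $m^x \equiv 0$ for $x \in N$ preserves $M = m^L$ $\prob$-a.s.\ and yields the required property for \emph{every} $x \in E$.

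For the ``if'' direction, Lemma \ref{lem:RC_filtr} shows that $\G_s$ is generated by the $\pi$-system $\{A \cap \{L \in B\} : A \in \F_s,\, B \in \cE\}$. Applying \eqref{expec_init} to the $(\cE \otimes \F_t)$-measurable map $(x,\omega) \mapsto m^x_t(\omega)\ind_A(\omega)\ind_B(x)$ and invoking the $\FF$-martingale property of $m^x q^x$, one finds
\[
\expec[M_t \ind_A \ind_{\{L \in B\}}] = \int_B \expec[m^x_t q^x_t \ind_A] \lambda(\ud x) = \int_B \expec[m^x_s q^x_s \ind_A] \lambda(\ud x) = \expec[M_s \ind_A \ind_{\{L \in B\}}],
\]
and a standard monotone class argument promotes this identity to the full $\GG$-martingale equality $\expec[M_t | \G_s] = M_s$. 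Integrability $\expec[|M_t|] < \infty$ is obtained from the same computation run with $|m^x_t|$ in place of $m^x_t$, which is implicit in the statement.

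The main obstacle will lie in the ``only if'' direction: passing from the identity holding for $\lambda$-a.e.\ $x$ on a countable determining family to a universally valid $\FF$-martingale statement compatible with the $(\cE \otimes \cO(\FF))$-measurability requires careful bookkeeping so that the exceptional $\lambda$-nullset can be chosen independently of $(s,t)$ and of the test set $A$. Having c\`adl\`ag paths of $m^x q^x$ uniformly in $x$, inherited from the c\`adl\`ag version of the $\FF$-optional projection exploited in Lemma \ref{lem:RC_filtr}, is the structural ingredient that lets the nullset be selected once and for all.
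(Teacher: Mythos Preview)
Your proposal is correct and follows essentially the same route as the paper: invoke Lemma \ref{lem:optional} for the measurable version, test the $\GG$-martingale identity against $\ind_A g(L)$ via \eqref{expec_init}, extract a single $\lambda$-nullset over rational times, redefine there, and use path regularity to pass to all $t$. The only cosmetic difference is that the paper does not assert c\`adl\`ag paths of $m^x$ out of Lemma \ref{lem:optional}; instead it first establishes the $\FF$-martingale property of $\widehat m^x q^x$ on $t\in\mathbb{Q}$ and then takes an explicit right-continuous regularization (as in \cite[proof of Lemma 1.8]{Jac85} and \cite[Theorem 2.44]{MR1219534}) to obtain the final $m^x$. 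Your observation that, for a $\GG$-martingale $M$, the specific construction in the proof of Lemma \ref{lem:optional} (piecing together the c\`adl\`ag $Y^n_\cdot(x)$ from Lemma \ref{lem:RC_filtr}) already yields c\`adl\`ag $m^x$ for every $x$ is a valid shortcut that replaces the regularization step.
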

\begin{proof}
If $(M_t)_{t\geq0}\in\M(\prob,\GG)$, Lemma \ref{lem:optional} gives the existence of a $\pare{\cE\otimes\cO(\FF)}$-measurable function $E\times\Omega\times\Real_+\ni(x,\omega,t)\mapsto \widetilde{m}_t^x(\omega)$ such that $M_t(\omega)=\widetilde{m}_t^{L(\omega)}(\omega)$ holds $\prob$-a.s. for all $t\in\Real_+$.
Let $s,t\in\mathbb{Q}_+$ (with $\mathbb{Q}_+$ denoting the set of positive rational numbers) with $s\leq t$ and consider an arbitrary event $A_s\in\F_s$ and a bounded $\cE$-measurable function $g:E\rightarrow\Real$. Then, in view of formula \eqref{expec_init} together with the $\GG$-martingale property of $(M_t)_{t\geq0}$, it holds that
\begin{align}
\int_Eg(x)\expec[\widetilde{m}^x_sq^x_s\ind_{A_s}]\lambda(\ud x)
&= \expec[\widetilde{m}^L_s\ind_{A_s}g(L)]
= \expec[M_s\ind_{A_s}g(L)]	\notag\\
&= \expec[M_t\ind_{A_s}g(L)]
= \expec[\widetilde{m}^L_t\ind_{A_s}g(L)]
= \int_Eg(x)\expec\Bigl[\expec[\widetilde{m}^x_tq^x_t|\F_s]\ind_{A_s}\Bigr]\lambda(\ud x),
\label{eq:mart_proof}	
\end{align}
where in the last term we take a $\cE$-measurable version of the conditional expectation $\expec[\widetilde{m}^x_tq^x_t|\F_s]$ (see \cite[Lemma 3]{SY}). This shows that the set 
$
\bigl\{(x,\omega)\in E\times\Omega : \widetilde{m}^x_s(\omega)q^x_s(\omega)\neq\expec[\widetilde{m}^x_tq^x_t|\F_s](\omega)\bigr\}
$
has zero $(\lambda\otimes\PP)$-measure. In turn, arguing similarly as in \cite[Lemma 1.8]{Jac85}, this implies that the set
$
B := \{x\in E : \widetilde{m}^x_sq^x_s = \expec[\widetilde{m}^x_tq^x_t|\F_s]\text{ $\prob$-a.s. for all }s,t\in\mathbb{Q}_+\text{ with }s\leq t\}
$
satisfies $\lambda(B)=1$.
Define then $\widehat{m}^x_t(\omega):=\widetilde{m}^x_t(\omega)\ind_B(x)$, for all $(x,\omega,t)\in E\times\Omega\times\Real_+$, so that $(\widehat{m}^x_tq^x_t)_{t\in\mathbb{Q}_+}$ is a martingale on $\basisp$, for all $x\in E$.
For every $t\in\mathbb{Q}_+$, it holds that
\[	\ba
\prob(M_t=\widehat{m}^L_t)
&= \prob(\widetilde{m}^L_t=\widehat{m}^L_t)
= \int_E \expec\bigl[\ind_{\{\widetilde{m}^x_t=\widehat{m}^x_t\}}q^x_t\bigr]\lambda(\ud x)
= \int_B \expec[q^x_t]\lambda(\ud x)
= \lambda(B) = 1,
\ea	\]
where the first equality uses the fact that $\prob(M_t=\widetilde{m}^L_t)=1$  and the second equality follows from \eqref{expec_init}. Hence, $M_t=\widehat{m}^L_t$ holds $\prob$-a.s. simultaneously for all rationals $t\in\mathbb{Q}_+$.
For $t\geq0$, following the proof of \cite[Lemma 1.8]{Jac85}, let denote by $C_t$ the set of all $(x,\omega)\in E\times\Omega$ such that $\widehat{m}^x_{\cdot}(\omega)q^x_{\cdot}(\omega)$ admits finite limits from the left and from the right along the rationals at every $s\in[0,t]$. 
For every $t>0$, the set $C_t$ is $\F_t$-measurable and the martingale property of $(\widehat{m}^x_tq^x_t)_{t\in\mathbb{Q}_+}$ implies that $\prob(\{\omega:(x,\omega)\in C_t\})=1$, for every $x\in E$.
For all $(x,\omega,t)\in E\times\Omega\times\R_+$, define then
\[
n^x_t(\omega) :=
\begin{cases}
\underset{s\in\mathbb{Q}_+,s\downarrow t}{\lim} \, \widehat{m}^x_s(\omega)q^x_s(\omega),
& \text{ if }(x,\omega)\in\bigcap_{s>t}C_s;\\
0, & \text{ otherwise}.
\end{cases}
\]
The function $E\times\Omega\times\R_+\ni(x,\omega,t)\mapsto n^x_t(\omega)$ is $\cO(\oFF)$-measurable and, since $n^x_{\cdot}(\omega)$ is c\`adl\`ag for every $(x,\omega)\in E\times\Omega$, it is also $\pare{\cE\otimes\cO(\FF)}$-measurable, in view of \cite[Remark 1 after Proposition 3]{SY}. 
Hence, for every $x\in E$, $(n^x_t)_{t\geq0}$ is a right-continuous regularization of the martingale $(\widehat{m}^x_tq^x_t)_{t\in\mathbb{Q}_+}$.
Recalling that $q^x>0$ on $\dbraco{0,\zeta^x}$ (see e.g. \cite[Theorem 2.62]{MR1219534}), let then $m^x_t(\omega):=n^x_t(\omega)/q^x_t(\omega)\ind_{\{t<\zeta^x(\omega)\}}$, for all $(x,\omega,t)\in E\times\Omega\times\R_+$. 
Since $q^x=0$ on $\dbraco{\zeta^x,\infty}$, it holds that $m^x_tq^x_t=n^x_t\ind_{\{t<\zeta^x\}}=n^x_t$, so that $(m^x_tq^x_t)_{t\geq0}$ also represents a right-continuous regularization of $(\widehat{m}^x_tq^x_t)_{t\in\mathbb{Q}_+}$.
In view of Lemma \ref{lem-Jac}, the function $(x,\omega,t)\mapsto m^x_t(\omega)$ is $\pare{\cE\otimes\cO(\FF)}$-measurable and $m^x_{\cdot}(\omega)$ is c\`adl\`ag, for every $(x,\omega)\in E\times\Omega$.
For any $t\geq0$, let $\{t_n\}_{n\in\N}$ be a decreasing sequence in $\mathbb{Q}_+$ such that $\lim_{n\rightarrow\infty}t_n=t$. 
It then holds that
\[
m^L_t = \frac{n^L_t}{q^L_t} = \lim_{n\rightarrow\infty}\widehat{m}^L_{t_n} = \lim_{n\rightarrow\infty}M_{t_n} = M_t
\qquad\text{ $\prob$-a.s.}, 
\]
where we have used the fact that $M_{\cdot}=\widehat{m}^L_{\cdot}$ holds on all positive rationals $\prob$-a.s. together with the right-continuity of $(M_t)_{t\geq0}$. This proves the necessity part of the proposition.


The converse implication (sufficiency) follows by the same arguments used in  \eqref{eq:mart_proof} together with the fact that the $\sigma$-field $\G_s$ is generated by the random variables of the form $\ind_{A_s}g(L)$, for $A_s\in\F_s$ and $g:E\rightarrow\Real$ bounded $\cE$-measurable (see Lemma \ref{lem:RC_filtr}). 
\end{proof}

Proposition \ref{prop:mart} characterizes the $\GG$-martingale property by separating the dependence on the original filtration $\FF$ from the additional information generated by $L$, making use of the densities $\{q^x:x\in E\}$. 
Similar results have already appeared in the theory of enlargement of filtrations.
In particular, the result of Proposition \ref{prop:mart} is in the spirit of the Girsanov-like interpretation of initial enlargement proposed in \cite[Section 5]{Jac85}.
Furthermore, noting that an initial enlargement of $\FF$ with respect to $L$ coincides with a progressive enlargement of $\FF$ with respect to $L$ on $\dbraco{L,\infty}$ (see e.g. \cite[Lemma 3]{KLP}), an analogous result has been established in \cite[Proposition 5.5]{EKJJ}. A related result can also be found in  \cite{GVV06}.

\subsection{The predictable representation property on the product space $(\widehat{\Omega},\oFF,\oPP)$}	\label{subsec2}

The main purpose of this subsection consists in showing that the strong predictable representation property of the $\FF$-local martingale $S=(S_t)_{t\geq0}$ can be transferred onto the product space $(\widehat{\Omega},\oFF,\oPP)$. In particular, if $\{(m^x_t)_{t\geq0} : x\in E\}$ is a family of $\FF$-martingales parameterized by $x\in E$, we shall establish a martingale representation result which holds simultaneously for all $(m^x_t)_{t\geq0}$, $x\in E$, with good measurability properties of the integrand appearing in the representation (see Proposition \ref{prop:mr} and compare also with Remark \ref{rem:Jacod} and Appendix \ref{sec:alt}).
The results presented in this subsection do not rely on Assumption \ref{ass:Jacod}.
 
In the line of \cite{FI93}, working on the product space $(\widehat{\Omega},\oFF,\oPP)$ allows to decouple the random variable $L$ from the original filtration $\FF$. Hence, by embedding the original probability space into the product space, the situation becomes analogous to an initial enlargement of $\FF$ with an independent random variable and Lemmata \ref{lem:EI_mart} and \ref{lem:PRP_prod} could therefore be deduced from known results on initially enlarged filtrations. However, in order to make the presentation self-contained, we prefer to give complete proofs for all the results.

Let $E\times\Omega\times\Real_+\ni(x,\omega,t)\mapsto \mu^x_t(\omega)$ be an $(\oA\otimes\mathcal{B}_{\Real_+})$-measurable function. We use the notation $\mu^{\cdot}=(\mu^{\cdot}_t)_{t\geq0}$ to denote the map $(x,\omega,t)\mapsto \mu^x_t(\omega)$ viewed as a stochastic process on the product space $(\widehat{\Omega},\oFF,\oPP)$,
As a preliminary, we show that the martingale property on the product space $(\widehat{\Omega},\oFF,\oPP)$ can be characterized in terms of the martingale property of a family of processes on the original space $\basisp$. The sufficiency part of the following lemma has been recently established in \cite[Proposition 4.7]{EsmImk}.
Even though the proof of the following lemma is rather similar to that of Proposition \ref{prop:mart}, we include full details for the convenience of the reader.

\begin{lem}	\label{lem:EI_mart}
An $(\oA\otimes\mathcal{B}_{\Real_+})$-measurable function $E\times\Omega\times\Real_+\ni(x,\omega,t)\mapsto \mu^x_t(\omega)$ which satisfies $\int_E\expec[|\mu_t^x|]\lambda(\ud x)<\infty$, for all $t\in\Real_+$, is a martingale on $(\widehat{\Omega},\oFF,\oPP)$ if and only if there exists a $\pare{\cE\otimes\cO(\FF)}$-measurable function $E\times\Omega\times\Real_+\ni (x,\omega,t)\mapsto m^x_t(\omega)$ such that $(m^x_t)_{t\geq0}\in\M(\prob,\FF)$, for all $x\in E$, and $\mu^x_t(\omega)=m^x_t(\omega)$ holds $\oPP$-a.s. for all $t\in\Real_+$.
\end{lem}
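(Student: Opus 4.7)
Proof plan.

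The sufficiency direction is elementary. Given $(m^x_t)$ with the stated properties, fix $s\leq t$ and any $C\in\oF_s=\bigcap_{\varepsilon>0}(\cE\otimes\F_{s+\varepsilon})$. Since $C\in\cE\otimes\F_{s+\varepsilon}$ for every $\varepsilon>0$, Fubini together with the $\bF$-martingale property of each $m^x$ gives $\oE[m^\cdot_t\ind_C]=\oE[m^\cdot_{s+\varepsilon}\ind_C]$. Passing $\varepsilon\downarrow 0$ via the right-continuity of each $m^x$ and dominated convergence (justified by the assumption $\int_E\expec[|m^x_t|]\lambda(\ud x)<\infty$, which controls $|m^x_{s+\varepsilon}|$ through the bound $|m^x_{s+\varepsilon}|\leq\expec[|m^x_t|\mid\F_{s+\varepsilon}]$) yields the $\oFF$-martingale identity for $m^\cdot$, and hence for $\mu^\cdot$ since the two coincide pointwise in $t$ $\oPP$-a.s.

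For necessity, first replace $\mu^\cdot$ by an $\oFF$-càdlàg modification, available because $\oFF$ is right-continuous. Each section $\mu^x_t$ is then $\F_{t+\varepsilon}$-measurable for every $\varepsilon>0$ and hence $\F_t$-measurable by the usual conditions on $\bF$. The crux is to show that $\mu^x\in\M(\prob,\bF)$ for $\lambda$-a.e.\ $x$. Fix rationals $s\leq t$ and test the $\oFF$-martingale identity against rectangles $B\times A$ with $B\in\cE$ and $A\in\F_s$ to obtain
\[
\int_B\expec\bigl[(\mu^x_t-\mu^x_s)\ind_A\bigr]\,\lambda(\ud x)=0.
\]
By \cite[Lemma 3]{SY} one can pick $(\cE\otimes\F_s)$-measurable functions $Y^\cdot_{s,t}$ and $\tilde\mu^\cdot_s$ such that, for every $x\in E$, $Y^x_{s,t}=\expec[\mu^x_t\mid\F_s]$ and $\tilde\mu^x_s=\expec[\mu^x_s\mid\F_s]=\mu^x_s$ hold $\prob$-a.s.\ (the last equality using the $\F_s$-measurability of $\mu^x_s$ noted above). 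The displayed identity translates into $\oE[(Y^\cdot_{s,t}-\tilde\mu^\cdot_s)h]=0$ for every bounded $(\cE\otimes\F_s)$-measurable $h$, whence $Y^\cdot_{s,t}=\tilde\mu^\cdot_s$ $\oPP$-a.s.\ by uniqueness of conditional expectation. Fubini then produces a $\lambda$-null set $N_{s,t}\subset E$ outside of which $\expec[\mu^x_t\mid\F_s]=\mu^x_s$ $\prob$-a.s.; taking the union over the countable family of rational pairs $(s,t)$ and exploiting the càdlàg paths of $\mu^x$ yields a $\lambda$-full set $E_0$ on which $\mu^x\in\M(\prob,\bF)$.

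Finally, to promote $\mu^\cdot$ to a $(\cE\otimes\cO(\bF))$-measurable object, apply \cite[Proposition 3]{SY} to $\mu^\cdot_n$ for each $n\in\N$, producing $(\cE\otimes\cO(\bF))$-measurable càdlàg families $\widehat Y^{x,n}$ that are $\bF$-martingales in $t$ for each $x$ and satisfy $\widehat Y^{x,n}_t=\expec[\mu^x_n\mid\F_t]$ $\prob$-a.s.\ for every $t\leq n$. For $x\in E_0$ the martingale property of $\mu^x$ forces $\widehat Y^{x,n}_t=\mu^x_t$ $\prob$-a.s.\ on $[0,n]$ and, in particular, the consistency $\widehat Y^{x,n+1}=\widehat Y^{x,n}$ on $[0,n]$ holds $\prob$-a.s.; after a further $\lambda$-null shrinking of $E_0$ ensuring this consistency is pathwise in $\omega$, the formula
\[
m^x_t(\omega)\dfn\ind_{E_0}(x)\sum_{n\geq 1}\widehat Y^{x,n}_t(\omega)\ind_{[n-1,n)}(t)
\]
defines a $(\cE\otimes\cO(\bF))$-measurable process that is an $\bF$-martingale for every fixed $x$ and coincides with $\mu^x_t$ $\oPP$-a.s.\ for every $t$.

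The main obstacle is the descent step in the middle paragraph: the $\oFF$-martingale identity only delivers integral equations against rectangles in $\cE\otimes\F_s$, but $\oF_s$ is in general strictly larger than this product $\sigma$-field, so the $\oPP$-a.s.\ equality $Y^\cdot_{s,t}=\mu^\cdot_s$ cannot be deduced from the integral identity without first replacing $\mu^\cdot_s$ by a $(\cE\otimes\F_s)$-measurable version. It is the combination of right-continuity of $\bF$ with \cite[Lemma 3]{SY} that supplies this replacement up to $\oPP$-null sets, after which the uniqueness of conditional expectation takes care of the rest.
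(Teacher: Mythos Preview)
Your argument is correct and follows essentially the same route as the paper: test the $\oFF$-martingale identity against rectangles $g(x)\ind_{A_s}$, invoke a measurable version of the conditional expectation (\cite[Lemma 3]{SY}), extract a $\lambda$-full set on which $(\mu^x_t)_{t\in\mathbb{Q}}$ is an $\FF$-martingale, and pass to a right-continuous regularisation. The sufficiency direction is also handled the same way.

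One simplification worth noting: the paper dispatches the measurability obstacle you flag in your final paragraph right at the outset. Since the c\`adl\`ag $\oFF$-martingale $\mu^{\cdot}$ is $\cO(\oFF)$-measurable and sections of $\cO(\oFF)$-measurable sets are $\cO(\FF)$-measurable, \cite[Remark 1 after Proposition 3]{SY} immediately furnishes a $(\cE\otimes\cO(\FF))$-measurable version of $\mu^{\cdot}$, so that $\mu^x_s$ is already $\F_s$-measurable and jointly measurable in $x$. This replaces your two-step construction (first \cite[Lemma 3]{SY} to get $\tilde\mu^{\cdot}_s$, then the gluing of the $\widehat Y^{x,n}$ via \cite[Proposition 3]{SY}) with a single application of the same circle of results, after which one simply sets $m^x_t:=\mu^x_t\ind_B(x)$ on rationals and takes the right-continuous regularisation as in the proof of Proposition~\ref{prop:mart}.
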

\begin{proof}
Let $(x,\omega,t)\mapsto \mu^x_t(\omega)$ be a measurable function satisfying $\int_E\expec[|\mu_t^x|]\lambda(\ud x)<\infty$, for all $t\in\R_+$, and such that $(\mu^{\cdot}_t)_{t\geq0}\in\M(\oPP,\oFF)$. 
Being right-continuous in $t$ and $\oFF$-adapted, the map $(x,\omega,t)\mapsto\mu^x_t(\omega)$ is $\cO(\oFF)$-measurable, so that the process $(\mu^x_t)_{t\geq0}$ is $\cO(\FF)$-measurable, for every $x\in E$ (see Section \ref{subsec:setting}). Without loss of generality, we can take a $\pare{\cE\otimes\cO(\FF)}$-measurable version of the map $(x,\omega,t)\mapsto \mu^x_t(\omega)$, see \cite[Remark 1 after Proposition 3]{SY}. Consider arbitrary $s,t\in\mathbb{Q}_+$ with $s\leq t$, $A_s\in\F_s$ and a bounded $\cE$-measurable function $g:E\rightarrow\Real$. Then the random variable $g(\cdot)\ind_{A_s}$ on $(\widehat{\Omega},\oA)$ is $\oF_s$-measurable and, hence, the martingale property of $(\mu^{\cdot}_t)_{t\geq0}$ implies that
\[
\int_Eg(x)\expec\bigl[\mu^x_s\ind_{A_s}\bigr]\lambda(\ud x)
= \oE[\mu^{\cdot}_s\,g(\cdot)\ind_{A_s}]
= \oE[\mu^{\cdot}_t\,g(\cdot)\ind_{A_s}]
= \int_Eg(x)\expec\bigl[\expec[\mu^x_t|\F_s]\ind_{A_s}\bigr]\lambda(\ud x),
\]
where we take a $\cE$-measurable version of the conditional expectation $\expec[\mu^x_t|\F_s]$. 
By the same arguments used in the proof of Proposition \ref{prop:mart}, we then get the existence of a set $B\in\cE$ such that $\lambda(B)=1$ and of a $\pare{\cE\otimes\cO(\FF)}$-measurable function $E\times\Omega\times\Real_+\ni(x,\omega,t)\mapsto\widehat{m}^x_t(\omega)$ such that $(\widehat{m}^x_t)_{t\in\mathbb{Q}_+}$ is a martingale on $\basisp$, for all $x\in E$, and $\widehat{m}^x_t(\omega)=\mu^x_t(\omega)$ holds $\prob$-a.s. for all $t\in\mathbb{Q}_+$ and $x\in B$. 
Moreover, for every $t\in\mathbb{Q}_+$, it holds that
$
\oPP(\mu^{\cdot}_t=\widehat{m}^{\cdot}_t)
= \int_E\prob(\mu^x_t=\widehat{m}^x_t)\lambda(\ud x)
= \lambda(B) = 1.
$
Define then $(m^x_t)_{t\geq0}$ as the right-continuous regularization of $(\widehat{m}^x_t)_{t\in\mathbb{Q}_+}$, for every $x\in E$, as constructed in the proof of Proposition \ref{prop:mart}, so that $(m^x_t)_{t\geq0}\in\M(\prob,\FF)$, for every $x\in E$. 
The necessity part of the proposition then follows by the right-continuity of $(\mu^{\cdot}_t)_{t\geq0}$.

The converse implication (sufficiency) can be proved exactly as in \cite[Proposition 4.7]{EsmImk}.
\end{proof}

Let us define the $\oFF$-adapted process $\Shat=(\Shat_t)_{t\geq0}$ by $\Shat_t(x,\omega):=S_t(\omega)$, for all $(x,\omega,t)\in\widehat{\Omega}\times\Real_+$. 
As shown in the following (trivial) result, the process $\Shat$ on $(\widehat{\Omega},\oFF,\oPP)$ inherits the local martingale property of the original process $S$ on $\basisp$.

\begin{cor}	\label{lem:Shat_mart}
The process $\Shat=(\Shat_t)_{t\geq0}$ is a local martingale on $(\widehat{\Omega},\oFF,\oPP)$.
\end{cor}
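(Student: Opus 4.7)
The plan is to lift a localizing sequence for $S$ from $\basisp$ to $(\widehat{\Omega},\oFF,\oPP)$ and then invoke the sufficiency part of Lemma \ref{lem:EI_mart}. Since $S\in\Mloc(\prob,\FF)$, there exists a nondecreasing sequence $(\tau_n)_{n\in\N}$ of $\FF$-stopping times with $\tau_n\uparrow\infty$ $\prob$-a.s.\ such that the stopped process $S^{\tau_n}=(S_{\tau_n\wedge t})_{t\geq0}$ is a uniformly integrable martingale on $\basisp$, for every $n\in\N$.

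Define $\widehat{\tau}_n : \widehat{\Omega}\rightarrow[0,\infty]$ by $\widehat{\tau}_n(x,\omega):=\tau_n(\omega)$. Since $\{\widehat{\tau}_n\leq t\}=E\times\{\tau_n\leq t\}\in\cE\otimes\F_t\subseteq\oF_t$ for every $t\in\Real_+$, the map $\widehat{\tau}_n$ is an $\oFF$-stopping time and evidently $\widehat{\tau}_n\uparrow\infty$ on $\widehat{\Omega}$. Moreover, pointwise on $\widehat{\Omega}$ it holds that $\Shat^{\widehat{\tau}_n}_t(x,\omega)=S_{\tau_n(\omega)\wedge t}(\omega)$, which does not depend on $x$.

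To conclude, set $m^x_t(\omega):=S^{\tau_n}_t(\omega)$ independently of $x$. The map $(x,\omega,t)\mapsto m^x_t(\omega)$ is trivially $(\cE\otimes\cO(\FF))$-measurable, and $(m^x_t)_{t\geq0}\in\M(\prob,\FF)$ for every $x\in E$. The integrability condition of Lemma \ref{lem:EI_mart} is satisfied because
\[
\int_E\expec[|m^x_t|]\,\lambda(\ud x)=\expec[|S^{\tau_n}_t|]<\infty
\qquad\text{for all }t\in\Real_+,
\]
by uniform integrability of $S^{\tau_n}$. Since $\Shat^{\widehat{\tau}_n}_t=m^x_t$ holds $\oPP$-a.s.\ for every $t\in\Real_+$, the sufficiency part of Lemma \ref{lem:EI_mart} yields $\Shat^{\widehat{\tau}_n}\in\M(\oPP,\oFF)$ for each $n\in\N$, proving that $\Shat\in\Mloc(\oPP,\oFF)$.

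There is no real obstacle here: the statement is essentially a bookkeeping exercise that exploits the product structure $\oPP=\lambda\otimes\prob$ together with the fact that $\Shat$ is constant in the $x$-variable. The only point worth explicit verification is that $\FF$-stopping times lift canonically to $\oFF$-stopping times, which follows immediately from the definition $\oF_t=\bigcap_{\varepsilon>0}(\cE\otimes\F_{t+\varepsilon})$.
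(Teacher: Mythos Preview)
Your proof is correct and follows essentially the same route as the paper: lift the $\FF$-localizing sequence $(\tau_n)$ to $\oFF$-stopping times $\widehat{\tau}_n(x,\omega):=\tau_n(\omega)$, then apply the sufficiency part of Lemma~\ref{lem:EI_mart} to the constant-in-$x$ family $m^x:=S^{\tau_n}$ to conclude that $\Shat^{\widehat{\tau}_n}\in\M(\oPP,\oFF)$. The only cosmetic quibble is that $\widehat{\tau}_n\uparrow\infty$ holds $\oPP$-a.s.\ rather than everywhere on $\widehat{\Omega}$, since $\tau_n\uparrow\infty$ is only assumed $\prob$-a.s.
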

\begin{proof}
Since $(S_t)_{t\geq0}\in\Mloc(\prob,\FF)$, there exists a sequence $\{\tau_n\}_{n\in\N}$ of $\FF$-stopping times increasing $\prob$-a.s. to infinity such that $S^{\tau_n}\in\M(\prob,\FF)$, for all $n\in\N$. Letting $\widehat{\tau}_n(x,\omega):=\tau_n(\omega)$, for all $(x,\omega)\in\widehat{\Omega}$, it  holds that 
$ 
\{(x,\omega)\in\widehat{\Omega} : \widehat{\tau}_n(x,\omega)\leq t\}
= E \times \{\omega\in\Omega : \tau_n(\omega)\leq t\}
\in \oF_t
$,
for all $t\in\Real_+$ and $n\in\N$, so that $\{\widehat{\tau}_n\}_{n\in\N}$ are $\oFF$-stopping times. Since $\Shat_{\widehat{\tau}_n\wedge t}=S_{\tau_n\wedge t}$ and $(S_{\tau_n\wedge t})_{t\geq0}\in\M(\prob,\FF)$, Lemma \ref{lem:EI_mart} implies that $(\Shat_{\widehat{\tau}_n\wedge t})_{t\geq0}\in\M(\oPP,\oFF)$, for all $n\in\N$, thus proving the claim.
\end{proof}

We are now in a position to prove that the predictable representation property of  $S$ on $\basisp$ can be transferred onto the product space $(\widehat{\Omega},\oFF,\oPP)$. 
This is the content of Lemma \ref{lem:PRP_prod} below, which can be regarded as an extension of \cite[Theorem 4.13]{EsmImk} to a general setting.
As a preliminary, we recall the following well-known characterization of the strong predictable representation property (see \cite[Corollary 4.12]{MR542115} and also \cite[Theorem 13.5]{MR1219534} for the one-dimensional case), formulated on a generic filtered probability space $(\Omega',\A',\FF',\prob')$.

\begin{prop}	\label{prop:MRP}
Let $X=(X_t)_{t\geq0}$ be an $\Real^d$-valued local martingale on $(\Omega',\FF',\prob')$. \\
The following are equivalent:
\begin{enumerate}
\item[(i)] $X$ has the strong predictable representation property on $(\Omega',\FF',\prob')$;
\item[(ii)] for every bounded $N\in\M(\prob',\FF')$ with $N_0=0$, if $NX^i\in\mMloc(\prob',\FF')$ for all $i=1,\ldots,d$, then $N=0$ (up to a $\prob'$-evanescent set).
\end{enumerate}
\end{prop}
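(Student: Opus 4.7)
The plan is to prove both implications by exploiting \emph{strong orthogonality}: two local martingales $N$ and $X^i$ are strongly orthogonal precisely when $NX^i\in\Mloc(\prob',\FF')$. Since $N_0 X^i\in\Mloc(\prob',\FF')$ trivially, the hypothesis in (ii) depends only on $N-N_0$, so it suffices to work with $N$ having $N_0=0$ (under which the conclusion $N=0$ is unambiguous).

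For (i) $\Rightarrow$ (ii), assume the strong predictable representation property holds and let $N\in\M(\prob',\FF')$ be bounded with $N_0=0$ and $NX^i\in\Mloc(\prob',\FF')$ for every $i=1,\ldots,d$. By (i), $N=\varphi\cdot X$ for some $\varphi\in L_m(X;\prob',\FF')$. The integration-by-parts identity $NX^i=N_-\cdot X^i+X^i_-\cdot N+[N,X^i]$ shows, after a routine localization of the first two terms, that $[N,X^i]$ is itself a local martingale; being of finite variation it has vanishing dual predictable projection, so $\langle N,X^i\rangle\equiv 0$, that is $\sum_j\varphi^j\cdot\langle X^j,X^i\rangle\equiv 0$ for each $i$. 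Hence
\[
\langle N,N\rangle=\sum_i\varphi^i\cdot\langle X^i,N\rangle=\sum_{i,j}\varphi^i\varphi^j\cdot\langle X^i,X^j\rangle=0,
\]
and since $N^2-\langle N,N\rangle=N^2$ is then a bounded nonnegative local martingale starting at zero, one concludes $N\equiv 0$.

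For (ii) $\Rightarrow$ (i), introduce the stable subspace
\[
\mathcal{N}:=\bigl\{\zeta+(\varphi\cdot X):\zeta\in L^0(\F'_0),\ \varphi\in L_m(X;\prob',\FF')\bigr\}\subseteq\Mloc(\prob',\FF').
\]
Given $M\in\Mloc(\prob',\FF')$, localize $M-M_0$ to a bounded (hence $\mathcal{H}^2$) martingale on each stopped piece and apply the Galtchouk--Kunita--Watanabe projection onto the stable subspace generated by $X$ to write $M-M_0=\varphi\cdot X+N$, where $N\in\Mloc(\prob',\FF')$ is strongly orthogonal to each $X^i$ with $N_0=0$. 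A further stopping renders $N$ bounded, so hypothesis (ii) forces $N\equiv 0$ on each piece; patching the representations yields $M=M_0+\varphi\cdot X\in\mathcal{N}$.

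The main obstacle is purely technical: the predictable integrand produced by the Galtchouk--Kunita--Watanabe projection naturally lives in an $L^2$-space relative to the Dol\'eans measure of $\langle X,X\rangle$, and one must verify that after unlocalization it defines an element of $L_m(X;\prob',\FF')$, i.e.\ that it is $X$-integrable in the sense of local (rather than merely square-integrable) martingales. This careful bookkeeping is the content of \cite[Chapter IV]{MR542115}, from which the stated equivalence is quoted directly.
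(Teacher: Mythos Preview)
The paper does not prove this proposition; it is stated as a recalled fact with the citation ``see \cite[Corollary 4.12]{MR542115} and also \cite[Theorem 13.5]{MR1219534} for the one-dimensional case'', and you yourself note at the end that the equivalence is quoted directly from \cite{MR542115}. So there is no proof in the paper to compare against; your sketch is simply an outline of the standard argument behind the cited result.

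That said, there is one genuine technical slip in your (i)$\Rightarrow$(ii). You pass from $\langle N,X^i\rangle=0$ to $\sum_j\varphi^j\cdot\langle X^j,X^i\rangle=0$ and then to $\langle N,N\rangle=\sum_{i,j}\varphi^i\varphi^j\cdot\langle X^i,X^j\rangle$. For a general $\Real^d$-valued local martingale $X$ (not assumed locally square integrable), the predictable brackets $\langle X^i,X^j\rangle$ need not exist, so these identities are not available. The fix is either to bypass $\langle X^i,X^j\rangle$ entirely---observe that $\langle N,X^i\rangle$ \emph{does} exist and vanish (since $N$ is bounded and $[N,X^i]$ has locally integrable variation by Kunita--Watanabe), and then compute $\langle N,N\rangle=\langle \varphi\cdot X,N\rangle=\varphi\cdot\langle X,N\rangle=0$ directly---or to argue with $[\cdot,\cdot]$: from $[N,X^i]\in\Mloc$ one gets $[N,H\cdot X]=H\cdot[N,X]\in\Mloc$ for every bounded predictable $H$, and by closure $N$ is strongly orthogonal to the whole stable subspace generated by $X$, in particular to $N=\varphi\cdot X$ itself, forcing $[N,N]\in\Mloc$ and hence $N\equiv0$. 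Either route avoids the nonexistent object $\langle X^i,X^j\rangle$; this is precisely the kind of ``careful bookkeeping'' you allude to in your final sentence.
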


Lemma \ref{lem:EI_mart}, Corollary \ref{lem:Shat_mart} and Proposition \ref{prop:MRP} then yield the following result.

\begin{lem}	\label{lem:PRP_prod}
Suppose that $S=(S_t)_{t\geq0}$ has the strong predictable representation property on $(\Omega,\FF,\prob)$.
Then $\Shat=(\Shat_t)_{t\geq0}$ has the strong predictable representation property on $(\widehat{\Omega},\oFF,\oPP)$.
\end{lem}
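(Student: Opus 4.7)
The natural strategy is to verify condition (ii) of Proposition \ref{prop:MRP} for $\Shat$ on $(\widehat{\Omega},\oFF,\oPP)$ by pulling everything back to the family of $\FF$-martingales provided by Lemma \ref{lem:EI_mart} and then applying Proposition \ref{prop:MRP} to $S$ on $\basisp$ slice-by-slice. So the plan is to fix a bounded $\widehat{N}\in\M(\oPP,\oFF)$ such that $\widehat{N}\Shat^{i}\in\Mloc(\oPP,\oFF)$ for all $i=1,\ldots,d$, and to show that $\widehat{N}=0$ up to $\oPP$-evanescence.

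First, I would use Lemma \ref{lem:EI_mart} to obtain a $(\cE\otimes\cO(\FF))$-measurable function $(x,\omega,t)\mapsto N^{x}_t(\omega)$ such that $(N^{x}_t)_{t\geq0}\in\M(\prob,\FF)$ for every $x\in E$ and $\widehat{N}_t(x,\omega)=N^{x}_t(\omega)$ holds $\oPP$-a.s. for every $t$. Since $\widehat{N}$ is bounded by some $C\in\R_+$, the right-continuity of $N^x$ together with a Fubini argument yields that, for $\lambda$-a.e.\ $x\in E$, the martingale $N^x$ is bounded by $C$ on $\basisp$.

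Next, I would handle localization. For each $i$, let $\{\widehat{\sigma}^{i}_n\}_{n\in\N}$ be $\oFF$-stopping times with $\widehat{\sigma}^{i}_n\uparrow\infty$ $\oPP$-a.s.\ such that $(\widehat{N}\Shat^{i})^{\widehat{\sigma}^{i}_n}\in\M(\oPP,\oFF)$. The structure of $\oFF$ (the section $\widehat{\sigma}^{i}_n(x,\cdot)$ is an $\FF$-stopping time for every $x\in E$) together with Fubini guarantees that, for $\lambda$-a.e.\ $x$, the $\FF$-stopping times $\sigma^{i,x}_n:=\widehat{\sigma}^{i}_n(x,\cdot)$ satisfy $\sigma^{i,x}_n\uparrow\infty$ $\prob$-a.s. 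The uniformly integrable $\oFF$-martingale $(\widehat{N}\Shat^{i})^{\widehat{\sigma}^{i}_n}$ satisfies the integrability requirement of Lemma \ref{lem:EI_mart}, so the necessity part of that lemma (applied in the reverse direction to the sufficiency used a moment ago) produces, for $\lambda$-a.e.\ $x$, an $\FF$-martingale which is indistinguishable from $(N^{x}S^{i})^{\sigma^{i,x}_n}$. A standard diagonalization over $n$ and $i$ then shows that $N^{x}S^{i}\in\Mloc(\prob,\FF)$ for every $i=1,\ldots,d$ and for $\lambda$-a.e.\ $x\in E$.

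Finally, since by hypothesis $S$ has the strong predictable representation property on $\basisp$, condition (ii) of Proposition \ref{prop:MRP} applied to the bounded $\FF$-martingale $N^{x}$ yields $N^{x}=0$ up to $\prob$-evanescence for $\lambda$-a.e.\ $x\in E$. Translating back via $\widehat{N}_t=N^{L(\cdot)}_t$ on the product space gives $\widehat{N}=0$ up to $\oPP$-evanescence, which by Proposition \ref{prop:MRP} proves that $\Shat$ has the strong predictable representation property on $(\widehat{\Omega},\oFF,\oPP)$. The main technical nuisance I expect is the measurable-sectioning step in the localization — one must keep track of the $\cE$-measurability of the stopping times and check that Lemma \ref{lem:EI_mart} genuinely applies to the stopped processes — but this is straightforward given the measurability machinery already set up in the paper.
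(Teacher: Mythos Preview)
Your approach is correct and follows essentially the same route as the paper: both verify condition (ii) of Proposition~\ref{prop:MRP} for $\Shat$ by passing through Lemma~\ref{lem:EI_mart} to obtain the slice martingales $N^x$ and then applying the strong predictable representation property of $S$ on $\basisp$ for each $x$. The only noteworthy difference is in the localization step: instead of sectioning a generic $\oFF$-localizing sequence $\{\widehat{\sigma}^i_n\}$ as you do, the paper uses the specific $x$-independent stopping times $\widehat{\tau}_n(x,\omega):=\tau_n(\omega)$ from the proof of Corollary~\ref{lem:Shat_mart} and checks directly that $(\widehat{N}\Shat^i)^{\widehat{\tau}_n}\in\M(\oPP,\oFF)$ (a class~DL argument, since $\widehat{N}$ is bounded and $\Shat^{\widehat{\tau}_n}$ is a true martingale); this avoids the sectioning of stopping times and yields the conclusions for \emph{all} $x\in E$ rather than for $\lambda$-a.e.\ $x$. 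One small slip: in your final step, on the product space the identification is $\widehat{N}_t(x,\omega)=N^x_t(\omega)$, not $N^{L(\cdot)}_t$, since $L$ does not live on $\widehat{\Omega}$.
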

\begin{proof}
Let $\widehat{N}=(\widehat{N}_t)_{t\geq0}$ be a bounded martingale on $(\widehat{\Omega},\oFF,\oPP)$ with $\widehat{N}_0=0$ and suppose that $\widehat{N}\Shat^i\in\Mloc(\oPP,\oFF)$, for all $i=1,\ldots,d$. It is easy to check that $(\widehat{N}\Shat^i)^{\widehat{\tau}_n}\in\M(\oPP,\oFF)$, for all $i=1,\ldots,d$ and $n\in\N$, where the $\oFF$-stopping times $\{\widehat{\tau}_n\}_{n\in\N}$ are as in the proof of Corollary \ref{lem:Shat_mart}.
By the same arguments used in the proof of Lemma \ref{lem:EI_mart}, there exists a $\pare{\cE\otimes\cO(\FF)}$-measurable function $E\times\Omega\times\Real_+\ni(x,\omega,t)\mapsto N^x_t(\omega)$ such that $N^x_t(\omega)=\widehat{N}^x_t(\omega)$ holds $\oPP$-a.s. for all $t\in\Real_+$ and satisfying $(N^x_t)_{t\geq0}\in\M(\prob,\FF)$ and $(N^x_{\tau_n\wedge t}S^i_{\tau_n\wedge t})_{t\geq0}\in\M(\prob,\FF)$, for all $x\in E$ and every $i=1,\ldots,d$ and $n\in\N$, where the $\FF$-stopping times $\{\tau_n\}_{n\in\N}$ are as in the proof of Corollary \ref{lem:Shat_mart}. In turn, this means that $(N^x_tS^i_t)_{t\geq0}\in\Mloc(\prob,\FF)$, for all $x\in E$. By Proposition \ref{prop:MRP}, the strong predictable representation property of $S$ on $(\Omega,\FF,\prob)$ implies that $N^x=0$ $\prob$-a.s., for all $x\in E$. Since $\widehat{N}^x_t(\omega)=N^x_t(\omega)$ holds $\oPP$-a.s. for all $t\in\Real_+$, this implies that $\widehat{N}=0$ up to a $\oPP$-evanescent set. Again by Proposition \ref{prop:MRP}, this proves that $\Shat$ has the strong predictable representation property on $(\widehat{\Omega},\oFF,\oPP)$.
\end{proof}

In particular, the above lemma allows us to prove the following martingale representation result, which holds simultaneously for a family  $\{(m^x_t)_{t\geq0} : x\in E\}$ of measurable processes such that $(m^x_t)_{t\geq0}\in\M(\prob,\FF)$, for all $x\in E$. This represents the key result of the present subsection.

\begin{prop}	\label{prop:mr}
Suppose that $S=(S_t)_{t\geq0}$ has the strong predictable representation property on $(\Omega,\FF,\prob)$. 
Let $E\times\Omega\times\Real_+\ni(x,\omega,t)\mapsto m^x_t(\omega)$ be a $\pare{\cE\otimes\cO(\FF)}$-measurable function satisfying $\int_E\expec[|m^x_t|]\lambda(\ud x)<\infty$, for all $t\in\Real_+$, and such that $(m^x_t)_{t\geq0}\in\M(\prob,\FF)$, for all $x\in E$.
Then there exists a $\pare{\cE\otimes\cP(\FF)}$-measurable function $E\times\Omega\times\Real_+\ni(x,\omega,t)\mapsto \theta^x_t(\omega)\in\Real^d$ satisfying $\theta^x\in L_m(S;\prob,\FF)$, for all $x\in E$, such that $m^x_t(\omega)=m^x_0(\omega)+(\theta^x\cdot S)_t(\omega)$ holds $\oPP$-a.s. for all $t\in\R_+$.
\end{prop}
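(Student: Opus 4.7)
The plan is to lift the parameterized family $\{(m^x_t)_{t\geq 0} : x \in E\}$ to a single martingale on the product space $(\widehat{\Omega}, \oFF, \oPP)$, apply the strong predictable representation property of $\Shat$ established in Lemma \ref{lem:PRP_prod}, and then descend back to the sectional stochastic integrals on $\basisp$ by means of the parameterized stochastic integration machinery of \cite{SY}.

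Concretely, viewing the $(\cE \otimes \cO(\FF))$-measurable map $(x,\omega,t) \mapsto m^x_t(\omega)$ as a process $m^{\cdot} = (m^{\cdot}_t)_{t \geq 0}$ on $(\widehat{\Omega},\oFF,\oPP)$, the hypothesis $\int_E \expec[|m^x_t|]\lambda(\ud x) < \infty$ translates into $\oE[|m^{\cdot}_t|]<\infty$ for every $t\in\Real_+$, so that the sufficiency direction of Lemma \ref{lem:EI_mart} yields $m^{\cdot} \in \M(\oPP,\oFF)$, with $m^{\cdot}_0$ being $\oF_0$-measurable. By Lemma \ref{lem:PRP_prod}, $\Shat$ has the strong predictable representation property on $(\widehat{\Omega},\oFF,\oPP)$, hence there exists $\widehat{\theta}\in L_m(\Shat;\oPP,\oFF)$ such that
\[
m^{\cdot}_t = m^{\cdot}_0 + (\widehat{\theta}\cdot \Shat)_t,
\qquad \oPP\text{-a.s.\ for every } t\in\Real_+.
\]
Since $\cP(\oFF) = \cE\otimes\cP(\FF)$ (see Subsection \ref{subsec:setting}), one can pick a $(\cE\otimes\cP(\FF))$-measurable representative $(x,\omega,t)\mapsto \theta^x_t(\omega)$ of $\widehat{\theta}$.

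The hard part will be to show that, up to a $\lambda$-null modification in $x$, one has $\theta^x\in L_m(S;\prob,\FF)$ for every $x\in E$ and that the sectional stochastic integral $(\theta^x\cdot S)$ on $\basisp$ coincides with the section at $x$ of the product-space integral $(\widehat{\theta}\cdot\Shat)$. Because $\Shat_t(x,\omega)=S_t(\omega)$ is constant in $x$, this compatibility is precisely the content of the parameterized stochastic integration results of \cite{SY}: a localizing sequence of $\oFF$-stopping times reducing $\Shat$ and controlling $\widehat{\theta}$ gives rise, via Fubini-type sectioning, to an $\FF$-localizing sequence reducing $S$ and rendering $\theta^x$ $S$-integrable for $\lambda$-a.e.\ $x\in E$, with matching stochastic integrals $\prob$-a.s. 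Redefining $\theta^x\equiv 0$ on the exceptional $\lambda$-null set (which does not affect the $\oPP$-a.s.\ identity, as $\oPP=\lambda\otimes\prob$) produces the desired $(\cE\otimes\cP(\FF))$-measurable integrand with $\theta^x\in L_m(S;\prob,\FF)$ for every $x\in E$ and the representation $m^x_t = m^x_0 + (\theta^x\cdot S)_t$ holding $\oPP$-a.s.\ for all $t\in\Real_+$. The technical core is thus the invocation of Stricker-Yor's framework, which legitimizes the passage between the product-space integral and the family of sectional ones.
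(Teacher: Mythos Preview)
Your proposal is correct and follows essentially the same route as the paper: lift to $(\widehat{\Omega},\oFF,\oPP)$ via Lemma~\ref{lem:EI_mart}, apply Lemma~\ref{lem:PRP_prod}, use $\cP(\oFF)=\cE\otimes\cP(\FF)$, then descend to sections and redefine on a $\lambda$-null set. The only cosmetic difference is that where you speak of ``localizing sequences and Fubini-type sectioning'' to obtain $\theta^x\in L_m(S;\prob,\FF)$ for $\lambda$-a.e.\ $x$, the paper argues more directly via the quadratic-variation integrability criterion \cite[Theorem~9.2]{MR1219534} combined with the observation $[\Shat,\Shat](x,\omega)=[S,S](\omega)$, and then invokes \cite[Theorem~2]{SY} for the $x$-measurable version of the sectional integrals.
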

\begin{proof}
By Lemma \ref{lem:EI_mart}, it holds that $(m^{\cdot}_t)_{t\geq0}\in\M(\oPP,\oFF)$. Lemma \ref{lem:PRP_prod} gives then the existence of a $\cP(\oFF)$-measurable function $\widehat{\Omega}\times\Real_+\ni(x,\omega,t)\mapsto\widetilde{\theta}^x_t(\omega)\in\Real^d$ satisfying $\widetilde{\theta}^{\cdot}\in L_m(\Shat;\oPP,\oFF)$ such that $m^{\cdot}_t=m^{\cdot}_0+(\widetilde{\theta}^{\cdot}\cdot S)_t$ holds $\oPP$-a.s. for all $t\in\R_+$.
Since $\cP(\oFF)=\cE\otimes\cP(\FF)$, the map $(x,\omega,t)\mapsto\widetilde{\theta}^x_t(\omega)$ is $\pare{\cE\otimes\cP(\FF)}$-measurable. Moreover, noting that $[\Shat,\Shat](x,\omega)=[S,S](\omega)$, for all $(x,\omega)\in\widehat{\Omega}$, the fact that $\widetilde{\theta}^{\cdot}\in L_m(\Shat;\oPP,\oFF)$ together with \cite[\textsection (4.59)]{MR542115} can be easily shown to imply that $\widetilde{\theta}^x\in L_m(S;\prob,\FF)$, for all $x$ belonging to a  set $B\in\cE$ with $\lambda(B)=1$. Define then $\theta^x_t(\omega):=\widetilde{\theta}^x_t(\omega)\ind_B(x)$, for all $(x,\omega,t)\in E\times\Omega\times\Real_+$, so that $\theta^x\in L_m(S;\prob,\FF)$, for all $x\in E$. For all $t\in\R_+$, it holds that
\[
\oPP\bigl(m^{\cdot}_t-m^{\cdot}_0=(\theta^{\cdot}\cdot S)_t\bigr)
= \int_E\expec\bigl[\ind_{\{m^x_t-m^x_0=(\theta^x\cdot S)_t\}}\bigr]\lambda(\ud x)
= \int_B\expec\bigl[\ind_{\{m^x_t-m^x_0=(\widetilde{\theta}^x\cdot S)_t\}}\bigr]\lambda(\ud x) 
= \lambda(B) = 1,
\]
where we take a version of the stochastic integral which is measurable in $x$, which exists by \cite[Theorem 2]{SY}.\footnote{Note that, in view of the note on page 133 of \cite{SY}, the assumption that the space $L^1(\Omega,\A,\prob)$ is separable is not needed in the proof of \cite[Theorem 2]{SY}.} 
\end{proof}

In particular, if Assumption \ref{ass:Jacod} holds, Lemma \ref{lem-Jac} shows that the function $(x,\omega,t)\mapsto q^x_t(\omega)$ satisfies the hypotheses of Proposition \ref{prop:mr}. Hence, there exists a $\pare{\cE\otimes\cP(\FF)}$-measurable function $E\times\Omega\times\Real_+\ni(x,\omega,t)\mapsto H^x_t(\omega)\in\Real^d$ satisfying $H^x\in L_m(S;\prob,\FF)$, for all $x\in E$, and such that 
\be	\label{eq:repr_q}
q^x_t(\omega) = q^x_0(\omega) + (H^x\cdot S)_t(\omega)
\qquad\text{ $\oPP$-a.s. for all $t\in\R_+$.}
\ee

\subsection{An auxiliary representation result}	\label{sec:aux_repr}

In this subsection, we combine the results obtained in the two preceding subsections and prove an auxiliary representation result that will turn out to be a key step in the derivation of the martingale representations stated in section \ref{sec:main_results}. 
The following result relies on Propositions \ref{prop:mart} and \ref{prop:mr} and shows that every local martingale on $\basisgp$ can be represented as a stochastic integral of $S$ up to a suitable ``change of num\'eraire''.

\begin{prop}	\label{lem:repr_1}
Suppose that Assumption \ref{ass:Jacod} holds and that the process $S=(S_t)_{t\geq0}$ has the strong predictable representation property on $(\Omega,\FF,\prob)$. 
Let $M=(M_t)_{t\geq0}\in\M(\prob,\GG)$.
Then there exists a $\pare{\cE\otimes\cP(\FF)}$-measurable function $E\times\Omega\times\Real_+\ni(x,\omega,t)\mapsto K^x_t(\omega)\in\Real^d$ satisfying $K^x\in L_m(S;\prob,\FF)$, for all $x\in E$,  and such that
\be	\label{eq:repr_1}
M_t = \frac{1}{q^L_t}\bigl(q^L_0M_0 + (K^L\cdot S)_t\bigr)
\qquad\text{ $\prob$-a.s. for all $t\in\R_+$,}
\ee
with the stochastic integral being understood as a semimartingale stochastic integral in $\GG$.
\end{prop}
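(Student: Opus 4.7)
The plan is to chain together the structural results already established earlier in the paper with the representation theorem on the product space, and then pass from the enlarged family of stochastic integrals back to $\GG$ by specialising in $x$.

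First, I would apply Proposition \ref{prop:mart} to the given $\GG$-martingale $M$ to obtain a $(\cE\otimes\cO(\FF))$-measurable function $(x,\omega,t)\mapsto m^x_t(\omega)$ such that $(m^x_tq^x_t)_{t\geq0}\in\M(\prob,\FF)$ for every $x\in E$ and $M_t=m^L_t$ $\prob$-a.s. for every $t\in\Real_+$. The integrability hypothesis required to apply Proposition \ref{prop:mr} to the parameterized family $\{(m^x_tq^x_t)_{t\geq0}:x\in E\}$ is easily checked via \eqref{expec_init}: for any $t\in\Real_+$,
\[
\int_E \expec[|m^x_tq^x_t|]\,\lambda(\ud x) = \expec[|m^L_t|] = \expec[|M_t|] < \infty.
\]
Hence Proposition \ref{prop:mr} produces a $(\cE\otimes\cP(\FF))$-measurable map $(x,\omega,t)\mapsto K^x_t(\omega)\in\Real^d$ with $K^x\in L_m(S;\prob,\FF)$ for every $x\in E$, satisfying
\be	\label{eq:prop_proof_aux}
m^x_tq^x_t = m^x_0 q^x_0 + (K^x\cdot S)_t,\qquad \oPP\text{-a.s.\ for all }t\in\Real_+.
\ee

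Next, I would evaluate \eqref{eq:prop_proof_aux} at $x=L$. By \cite[Theorem 2]{SY} together with its note on page 133, the stochastic integral $(K^x\cdot S)_t$ admits a version which is jointly measurable in $x$, so the substitution $x=L$ is well-defined. Under Assumption \ref{ass:Jacod}, the process $S$ is a $\GG$-semimartingale (by \cite[Theorem 2.1]{Jac85}), and the results of \cite{SY} on stochastic integrals depending on a parameter ensure that the $\FF$-local-martingale stochastic integral $(K^x\cdot S)_t$ specialised at $x=L$ coincides $\prob$-a.s., for each $t\in\Real_+$, with the $\GG$-semimartingale stochastic integral $(K^L\cdot S)_t$ (in particular, $K^L\in L(S;\prob,\GG)$). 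Recalling that $\prob(\zeta^L<\infty)=0$ by \cite[Corollary 1.11]{Jac85}, we have $q^L_t>0$ $\prob$-a.s. for every $t\in\Real_+$, so we may divide by $q^L_t$ and use $m^L_0 q^L_0 = M_0 q^L_0$ to conclude that
\[
M_t = m^L_t = \frac{1}{q^L_t}\bigl(q^L_0 M_0 + (K^L\cdot S)_t\bigr),\qquad \prob\text{-a.s.\ for all }t\in\Real_+,
\]
which is the desired representation \eqref{eq:repr_1}.

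The main obstacle, and the step that really needs the parameterized stochastic-integration machinery of \cite{SY}, is the passage from the $\FF$-stochastic integrals $(K^x\cdot S)$, defined separately for each $x$, to the $\GG$-stochastic integral $(K^L\cdot S)$ obtained by substitution. Measurability in $x$ of the integrand alone is not enough; one has to know that a jointly measurable version of the integral exists and that its specialisation at the random parameter $x=L$ agrees with the genuine $\GG$-semimartingale integral of $K^L$ against $S$. The rest of the argument is essentially a bookkeeping exercise combining Propositions \ref{prop:mart} and \ref{prop:mr} with the elementary fact that $q^L$ is $\prob$-a.s.\ strictly positive.
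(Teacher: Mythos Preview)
Your proposal follows essentially the same route as the paper's proof: apply Proposition~\ref{prop:mart}, check the $L^1$-integrability via \eqref{expec_init}, invoke Proposition~\ref{prop:mr} to get the parameterized representation~\eqref{eq:prop_proof_aux}, pass to an $x$-measurable version of the integral via \cite[Theorem~2]{SY}, specialise at $x=L$, and divide by $q^L_t>0$. The only imprecision is in the justification of the key step you yourself flag as the main obstacle: the claim that $K^L\in L(S;\prob,\GG)$ and that $(K^L\cdot S)$ agrees with $(K^x\cdot S)\big|_{x=L}$ does not follow from \cite{SY} alone---the paper instead uses \cite[Proposition~2.1]{MR604176} to transfer each $K^x$ from $L_m(S;\prob,\FF)$ to $L(S;\prob,\GG)$, \cite[Proposition~III.6.25]{MR1943877} to identify the two integrals, and the characterisation \cite[Theorem~III.6.30]{MR1943877} to conclude that $K^L\in L(S;\prob,\GG)$.
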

\begin{proof}
If $M\in\M(\prob,\GG)$, Proposition \ref{prop:mart} gives the existence of a $\pare{\cE\otimes\cO(\FF)}$-measurable function $(x,\omega,t)\mapsto m^x_t(\omega)$ such that $(q^x_tm^x_t)_{t\geq0}\in\M(\prob,\FF)$, for all $x\in E$, and such that $M_t=m^L_t$ holds $\prob$-a.s. for all $t\in\R_+$.
Since the map $(x,\omega,t)\mapsto q^x_t(\omega)$ is also $\pare{\cE\otimes\cO(\FF)}$-measurable (see Lemma \ref{lem-Jac}) and $\int_E\expec[|q^x_tm^x_t|]\lambda(\ud x)=\expec[|M_t|]<\infty$, for all $t\in\R_+$, Proposition \ref{prop:mr} implies that there exists an $\Real^d$-valued $\pare{\cE\otimes\cP(\FF)}$-measurable function $(x,\omega,t)\mapsto K^x_t(\omega)$ satisfying $K^x\in L_m(S;\prob,\FF)$, for all $x\in E$, such that 
\be	\label{eq:prod_repr}
q^x_t(\omega)m^x_t(\omega) = q^x_0(\omega)m^x_0(\omega) + (K^x\cdot S)_t(\omega) 
\qquad\text{  $\oPP$-a.s. for all $t\in\R_+$.}
\ee
By \cite[Theorem 2]{SY}, there exists a $\pare{\cE\otimes\cO(\FF)}$-measurable version of the stochastic integral $K^x\cdot S$.
Moreover, since every $\FF$-semimartingale is a $\GG$-semimartingale (see \cite[Theorem 1.1]{Jac85}), \cite[Proposition 2.1]{MR604176} implies that $K^x\cdot S$ is well-defined as a semimartingale stochastic integral in $\GG$, for every $x\in E$ (i.e., $K^x\in L(S;\prob,\GG)$ for every $x\in E$). Furthermore, in view of \cite[Proposition III.6.25]{MR1943877}, the stochastic integral $K^x\cdot S$ is the same when considered with respect to either of the two filtrations $\FF$ and $\GG$, so that there also exists an $x$-measurable version of $K^x\cdot S$ when considered in the filtration $\GG$.
Assuming for the moment that the $\GG$-predictable process $K^L=(K^L_t)_{t\geq0}$ belongs to $L(S;\prob,\GG)$, the stochastic integral $K^L\cdot S$ is well-defined in $\GG$.
Moreover, $(K^x\cdot S)|_{x=L}$ is indistinguishable from $K^L\cdot S$. This is evident if $K^x=k(x)K$, for some $\cE$-measurable bounded function $k:E\rightarrow\R$ and some $\bF$-predictable bounded process $K=(K_t)_{t\geq0}$, while the general case follows from a monotone class argument, using \cite[Proposition 5]{SY} together with the dominated convergence theorem for stochastic integrals (see \cite[Theorem IV.32]{MR1037262}).
Recalling that $q^L_t>0$ $\prob$-a.s.,
we can therefore conclude that, for all $t\in\R_+$,
\[
M_t = m^L_t 
= \frac{1}{q^L_t}(q^x_tm^x_t)\bigr|_{x=L}
= \frac{1}{q^L_t}\bigl(q^x_0m^x_0 + (K^x\cdot S)_t\bigr)\Bigr|_{x=L}
= \frac{1}{q^L_t}\bigl(q^L_0M_0 + (K^L\cdot S)_t\bigr)
\quad\text{ $\prob$-a.s.}
\]

To complete the proof, it remains to prove that $K^L\in L(S;\prob,\GG)$. To this effect, let $S^c$ and $S^d$ denote the continuous and purely discontinuous, respectively, $\FF$-local martingale parts of $S$ (see \cite[Theorem I.4.18]{MR1943877}). 
By \cite[Theorem 1.1]{Jac85}, $S^c$ and $S^d$ are special semimartingales in  $\GG$.
Hence, let $S^c=\Stilde^{(c)}+A^{(c)}$ and $S^d=\Stilde^{(d)}+A^{(d)}$ denote the $\GG$-canonical decompositions of $S^c$ and $S^d$, respectively, where $\Stilde^{(c)},\Stilde^{(d)}\in\Mloc(\prob,\GG)$ and $A^{(c)},A^{(d)}$ are $\GG$-predictable processes of finite variation. Therefore, we can write
\[
S = S_0 + S^c + S^d
= S_0 + \Stilde^{(c)} + \Stilde^{(d)} + A^{(c)} + A^{(d)}.
\]
In the remaining part of the proof, which relies on \cite{Jac85}, we shall prove that
\[
K^L \in L_m(\Stilde^{(c)};\prob,\GG) \cap L_m(\Stilde^{(d)};\prob,\GG) \cap L^0(A^{(c)};\prob,\GG) \cap L^0(A^{(d)};\prob,\GG) \subseteq L(S;\prob,\GG),
\]
where $L^0(A^{(c)};\prob,\GG)$ denotes the space of all $\R^d$-valued $\GG$-predictable processes which are integrable (in the sense of \cite[III.\textsection6b]{MR1943877}) with respect to the finite variation process $A^{(c)}$, and similarly for $L^0(A^{(d)};\prob,\GG)$.\\
(i):
by assumption, for every $x\in E$, it holds that $K^x\in L_m(S;\prob,\FF) \subseteq L_m(S^c;\prob,\FF)$. Therefore, $\int_0^t(K^x_s)^{\top}\ud\langle S^c,S^c\rangle^{\FF}_sK^x_s<\infty$ $\prob$-a.s. for all $t\in\R_+$ and $x\in E$. In view of \cite[Remark 9.20]{MR542115}, it holds that $\langle\Stilde^{(c)},\Stilde^{(c)}\rangle^{\GG}=\langle S^c,S^c\rangle^{\FF}$. An application of formula \eqref{expec_init} then gives, for all $t\in\R_+$,
\begin{align*}
\prob\left(\int_0^t(K^L_s)^{\top}\ud\langle\Stilde^{(c)},\Stilde^{(c)}\rangle^{\GG}_sK^L_s<\infty\right)
&= \prob\left(\int_0^t(K^L_s)^{\top}\ud\langle S^c,S^c\rangle^{\FF}_sK^L_s<\infty\right)	\\
&= \int_E\expec\left[q^x_t\ind_{\{\int_0^t(K^x_s)^{\top}\ud\langle S^c,S^c\rangle^{\FF}_sK^x_s<\infty\}}\right]\lambda(\ud x)
= \int_E\expec[q^x_t]\lambda(\ud x)=1,
\end{align*}
thus proving that $K^L\in L_m(\Stilde^{(c)};\prob,\GG)$.\\
(ii): 
by \cite[Theorem 2.1]{Jac85}, it holds that $A^{(c)}=\int_0^{\cdot}\frac{1}{q^L_{s-}}\ud\langle S^c,q^x\rangle^{\FF}_s\bigr|_{x=L}=\int_0^{\cdot}\frac{1}{q^L_{s-}}\ud\langle S^c,S^c\rangle^{\FF}_sH^L_s$, where the second equality follows from \eqref{eq:repr_q}. 
The same arguments used in step (i) allow to show that the $\GG$-predictable process $H^L=(H^L_t)_{t\geq0}$ satisfies $\int_0^t(H^L_s)^{\top}\ud\langle S^c,S^c\rangle^{\FF}_sH^L_s<\infty$ $\prob$-a.s. for all $t\in\R_+$. Hence, the fact that $\int_0^t(K^L_s)^{\top}\ud\langle S^c,S^c\rangle^{\FF}_sK^L_s<\infty$ $\prob$-a.s. for all $t\in\R_+$ (see step (i)) and the Kunita-Watanabe inequality imply that $\int_0^t|(K^L_s)^{\top}\ud\langle S^c,S^c\rangle^{\FF}_sH^L_s|<\infty$ $\prob$-a.s. for all $t\in\R_+$. Since the process $1/q^L_-$ is locally bounded, this proves that $K^L\in L^0(A^{(c)};\prob,\GG)$.\\
(iii): 
let $\mu^S(\omega;\ud t,\ud y)$ denote the jump measure of $S$, in the sense of \cite[Proposition II.1.16]{MR1943877}, and $\nu^{S,\FF}(\omega;\ud t,\ud y)$ the corresponding compensating measure in $\FF$. 
If $W:\Omega\times\Real_+\times\Real^d\ni(\omega,t,y)\mapsto W(\omega,t,y)$ is a $\pare{\cP(\FF)\otimes\mathcal{B}_{\Real^d}}$-measurable function, we denote by $W\ast(\mu^S-\nu^{S,\FF})$ the stochastic integral (when it exists) with respect to the random measure $\mu^S-\nu^{S,\FF}$, in the sense of \cite[Definition II.1.27]{MR1943877}. We denote by $\mathcal{G}_{\text{loc}}(\mu^S;\FF)$ the set of all  $\pare{\cP(\FF)\otimes\mathcal{B}_{\Real^d}}$-measurable functions $W$ such that the stochastic integral $W\ast(\mu^S-\nu^{S,\FF})$ exists.
In view of \cite[Corollary II.2.38]{MR1943877}, it holds that $S^d=y\ast(\mu^S-\nu^{S,\FF})$, where $y$ denotes the map $(\omega,t,y)\mapsto y\in\R^d$. 
Since $S$ has the strong predictable representation property on $\basisp$, the purely discontinuous $\FF$-local martingale part $q^{x,d}$ of $q^x$ admits a representation of the form $q^{x,d}=(q^x_-U^x)\ast(\mu^S-\nu^{S,\FF})$, for every $x\in E$, where the map $(x,\omega,t,y)\mapsto U^x(\omega,t,y)$ is $\pare{\cE\otimes\cP(\FF)\otimes\cB_{\R^d}}$-measurable and can be chosen to satisfy properties (ii)-(iii)-(iv) of \cite[Proposition 3.14]{Jac85}.
Moreover, \cite[Theorem 4.1]{Jac85} shows that the compensator $\nu^{S,\GG}$ of $\mu^S$ in the filtration $\GG$ is given by
\be	\label{eq:compensator_G}
\nu^{S,\GG}(\omega;\ud t,\ud y) = \bigl(1+U^{L(\omega)}(\omega,t,y)\bigr)\nu^{S,\FF}(\omega;\ud t,\ud y).
\ee
By \eqref{eq:prod_repr}, it holds that $(K^x)^{\top}\Delta S=\Delta (q^xm^x)$ $\oPP$-a.s., so that $(K^L)^{\top}\Delta S=\Delta (q^Lm^L)$ $\prob$-a.s.
Since $(q^x_tm^x_t)_{t\geq0}\in\M(\prob,\FF)$, for all $x\in E$, \cite[Proposition 4]{ACJ15} implies that $(q^L_tm^L_t)_{t\geq0}$ is a special semimartingale in $\GG$. In turn, by \cite[Proposition II.2.29]{MR1943877}, this implies that the process $\sum_{0<s\leq\cdot}((K^L_s)^{\top}\Delta S_s)^2\wedge|(K^L_s)^{\top}\Delta S_s|=\sum_{0<s\leq\cdot}(\Delta(q^Lm^L)_s)^2\wedge|\Delta(q^Lm^L)_s|$ is $\GG$-locally integrable.
Hence, for all $t\in\R_+$, it holds that,
\be	\label{eq:drift_integr_1}	\begin{aligned}
&\int_0^t\int_{\R^d}\Bigl(\bigl((K^L_s)^{\top}y\bigr)^2\wedge\bigl|(K^L_s)^{\top}y\bigr|\Bigr)\nu^{S,\GG}(\ud s,\ud y)	\\
&\quad = \int_0^t\int_{\R^d}\Bigl(\bigl((K^L_s)^{\top}y\bigr)^2\wedge\bigl|(K^L_s)^{\top}y\bigr|\Bigr)\bigl(1+U^{L}(s,y)\bigr)\nu^{S,\FF}(\ud s,\ud y) < \infty
\qquad \text{ $\prob$-a.s.}
\end{aligned}	\ee
where we have used \eqref{eq:compensator_G}.
Moreover, since $K^x\in L_m(S;\prob,\FF)\subseteq L_m(S^d;\prob,\FF)$, for every $x\in E$, and $S^d=y\ast(\mu^S-\nu^{S,\FF})$, it holds that $(K^x)^{\top}y\in\mathcal{G}_{\text{loc}}(\mu^S;\FF)$, for every $x\in E$ (see e.g. \cite[Theorem 11.23]{MR1219534}). 
In turn, noting that $\int_{\R^d}y\nu^{S,\FF}(\{t\}\times\ud y)=0$ up to an evanescent set and in view of \cite[Theorem II.1.33]{MR1943877}, this means that, for all $t\in\R_+$,
\[
\int_0^t\int_{\R^d}\Bigl(\bigl((K^x_s)^{\top}y\bigr)^2\wedge\bigl|(K^x_s)^{\top}y\bigr|\Bigr)\nu^{S,\FF}(\ud s,\ud y)<\infty
\qquad\text{ $\prob$-a.s. for all }x\in E.
\]
By formula \eqref{expec_init}, this last property implies that, for all $t\in\R_+$,
\be	\label{eq:drift_integr_2}	\begin{aligned}
&\prob\left(\int_0^t\int_{\R^d}\Bigl(\bigl((K^L_s)^{\top}y\bigr)^2\wedge\bigl|(K^L_s)^{\top}y\bigr|\Bigr)\nu^{S,\FF}(\ud s,\ud y)<\infty\right)	\\
&\quad = \int_E\expec\left[q^x_t\ind_{\{\int_0^t\int_{\R^d}(((K^x_s)^{\top}y)^2\wedge|(K^x_s)^{\top}y|)\nu^{S,\FF}(\ud s,\ud y)<\infty\}}\right]\lambda(\ud x)
= \int_E\expec[q^x_t]\lambda(\ud x) = 1.
\end{aligned}	\ee
At this point, making use of \eqref{eq:drift_integr_1}-\eqref{eq:drift_integr_2}, the same arguments given between equations (4.9) and (4.13) in \cite{Jac85} (with $(K^L_t)^{\top}y=:W(t,y)$ and $\nu^{S,\FF}(\ud t,\ud y)=:\nu(\ud t,\ud y)$, in the notation of \cite{Jac85}) allow to deduce that, for all $t\in\R_+$,
\[
\int_0^t\int_{\R^d}\left|(K^L_s)^{\top}yU^L(s,y)\right|\nu^{S,\FF}(\ud s,\ud y)<\infty
\qquad \text{$\prob$-a.s.}
\]
Since the process $A^{(d)}$ admits the representation $A^{(d)}=\int_0^{\cdot}\int_{\R^d}yU^L(s,y)\nu^{S,\FF}(\ud s,\ud y)$ (see \cite[proof of Theorem 2.5]{Jac85}), this proves that $K^L\in L^0(A^{(d)};\prob,\GG)$.\\
(iv):
since $S^d$ is a special semimartingale in $\GG$, the $\bG$-local martingale $\Stilde^{(d)}$ admits the representation $\Stilde^{(d)}=y\ast(\mu^S-\nu^{S,\GG})$ (see \cite[Proposition 3.77]{MR542115}). 
The $\bG$-predictable process $K^L$ belongs to $L_m(\Stilde^{(d)};\prob,\GG)$ if and only if the process $(\sum_{0<s\leq \cdot}((K^L_s)^{\top}\Delta\Stilde^{(d)}_s)^2)^{1/2}$ is $\bG$-locally integrable. 
Since $\Delta\Stilde^{(d)}=\Delta S^d-\Delta A^{(d)}=\Delta S-\Delta A^{(d)}$, it holds that
\begin{align}
\Biggl(\sum_{0<s\leq \cdot}\bigl((K^L_s)^{\top}\Delta\Stilde^{(d)}_s\bigr)^2\Biggr)^{1/2}
&= \Biggl(\sum_{0<s\leq \cdot}\bigl((K^L_s)^{\top}\Delta S_s-(K^L_s)^{\top}\Delta A^{(d)}_s\bigr)^2\Biggr)^{1/2}	\notag\\
&\leq \Biggl(\sum_{0<s\leq \cdot}\bigl((K^L_s)^{\top}\Delta S_s\bigr)^2\Biggr)^{1/2}
+ \Biggl(\sum_{0<s\leq \cdot}\bigl((K^L_s)^{\top}\Delta A^{(d)}_s\bigr)^2\Biggr)^{1/2}	\notag\\
&\leq \Biggl(\sum_{0<s\leq \cdot}\bigl((K^L_s)^{\top}\Delta S_s\bigr)^2\Biggr)^{1/2}
+ \sum_{0<s\leq \cdot}\bigl|(K^L_s)^{\top}\Delta A^{(d)}_s\bigr|.
\label{eq:inequalities}
\end{align}
The last two processes are both $\bG$-locally integrable. Indeed, as argued in step (iii), $(q^L_tm^L_t)_{t\geq0}$ is a special semimartingale in $\bG$, so that the process $(\sum_{0<s\leq\cdot}(\Delta(q^Lm^L)_s)^2)^{1/2}$ is $\bG$-locally integrable. Since $(K^L)^{\top}\Delta S=\Delta(q^Lm^L)$, as shown in step (iii), it follows that the first process appearing in \eqref{eq:inequalities} is $\bG$-locally integrable. The second process appearing in \eqref{eq:inequalities} is also $\bG$-locally integrable since, as shown in step (iii), $K^L\cdot A^{(d)}$ is well-defined as a finite variation process.
This proves that $K^L\in L_m(\Stilde^{(d)};\prob,\GG)$, thus completing the proof.
\end{proof}

\begin{rem}
Note that the stochastic integral $K^L\cdot S$ admits two possible interpretations: the first, as the stochastic integral of the $\bG$-predictable process $K^L$ with respect to $S$; the second, as an $x$-measurable version of the stochastic integral $K^x\cdot S$ evaluated at $x=L$. It is actually part of the result of Proposition \ref{lem:repr_1} that the two interpretations coincide (up to indistinguishability) when viewed in the enlarged filtration $\bG$.
\end{rem}

\begin{rem}	\label{rem:aux_repr}
Proposition \ref{lem:repr_1}, together with \cite[Proposition 3.4]{AFK}, shows that the process $(1/q^L_t)_{t\geq0}$ provides a precise link between $\GG$-martingales and $\FF$-martingales. In our context, the process $(1/q^L_t)_{t\geq0}$ plays a role analogous to the density of the martingale preserving probability measure mentioned in the introduction.
\end{rem}

The following lemma gives a more explicit description of the structure of the process $(1/q^L_t)_{t\geq0}$ and is based on \cite[Lemma 5]{ACJ15}.

\begin{lem}	\label{lem:q_Aks}
Suppose that Assumption \ref{ass:Jacod} holds and that $S=(S_t)_{t\geq0}$ has the strong predictable representation property on $(\Omega,\FF,\prob)$. Then the process $(1/q^L_t)_{t\geq0}$ admits the representation
\be	\label{eq:q_Aks}
\frac{1}{q^L_t} = \frac{1}{q^L_0} - \frac{H^L}{(q^L_{-})^2}\cdot\left(S-\frac{1}{q^L}\cdot [S,q^L]\right),
\ee
where the $\pare{\cE\otimes\cP(\FF)}$-measurable function $(x,\omega,t)\mapsto H^x_t(\omega)$ is as in \eqref{eq:repr_q}.
\end{lem}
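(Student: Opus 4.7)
The plan is to first lift the product-space representation~\eqref{eq:repr_q} to the enlarged filtration $\GG$ (exactly as was done for $K^L\cdot S$ in the proof of Proposition~\ref{lem:repr_1}) and then apply It\^o's formula to the strictly positive $\GG$-semimartingale $q^L$ with the function $f(y)=1/y$.

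Starting from $q^x=q^x_0+H^x\cdot S$ in~\eqref{eq:repr_q}, repeating verbatim, with $H^x$ in place of $K^x$, the final paragraph of the proof of Proposition~\ref{lem:repr_1} --- which combines \cite[Theorem 2]{SY} for a measurable version of the integral, \cite[Proposition 2.1]{MR604176} and \cite[Proposition III.6.25]{MR1943877} for the passage from $\FF$- to $\GG$-semimartingale integration, and \cite[Theorem III.6.30]{MR1943877} for $H^L\in L(S;\prob,\GG)$ --- yields the $\GG$-semimartingale identity $q^L_t=q^L_0+(H^L\cdot S)_t$, $\prob$-a.s.\ for all $t\in\Real_+$.

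Since $q^L>0$ and $q^L_->0$ hold $\prob$-a.s.\ by \cite[Corollary 1.11]{Jac85}, It\^o's formula applied to $f(y)=1/y$ then gives
\[
\frac{1}{q^L_t}=\frac{1}{q^L_0}-\frac{1}{(q^L_-)^2}\cdot q^L_t+\frac{1}{q^L\,(q^L_-)^2}\cdot[q^L,q^L]_t,
\]
once the elementary identity $\frac{1}{q^L_s}-\frac{1}{q^L_{s-}}+\frac{\Delta q^L_s}{(q^L_{s-})^2}=\frac{(\Delta q^L_s)^2}{(q^L_{s-})^2\,q^L_s}$ is used to absorb the jump corrections together with the continuous bracket into the last integral. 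Substituting $\ud q^L=H^L\ud S$ together with the bracket identity $[q^L,q^L]=H^L\cdot[S,q^L]$ (itself a consequence of $[H\cdot X,Y]=H\cdot[X,Y]$), and invoking associativity of the stochastic integral, rewrites the right-hand side as
\[
\frac{1}{q^L_0}-\frac{H^L}{(q^L_-)^2}\cdot S_t+\frac{H^L}{(q^L_-)^2}\cdot\left(\frac{1}{q^L}\cdot[S,q^L]\right)_t=\frac{1}{q^L_0}-\frac{H^L}{(q^L_-)^2}\cdot\left(S-\frac{1}{q^L}\cdot[S,q^L]\right)_t,
\]
which is precisely~\eqref{eq:q_Aks}.

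The main thing to watch is the integrability bookkeeping, namely that $\frac{H^L}{(q^L_-)^2}$ is $S$-integrable and $\frac{1}{q^L}$ is $[S,q^L]$-integrable as semimartingale integrands in $\GG$. I would handle this by localization along the stopping times $\zeta^L_n:=\inf\{t\geq0:q^L_t<1/n\}$, on which $1/q^L$ and $1/q^L_-$ are bounded by $n$, so that the required integrability reduces to $H^L\in L(S;\prob,\GG)$; since $\prob(\lim_n\zeta^L_n=\infty)=1$ (which follows from $\prob(\zeta^L<\infty)=0$), the global identity is recovered by passage to the limit.
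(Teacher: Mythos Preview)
Your proposal is correct and follows essentially the same route as the paper: lift $q^x=q^x_0+H^x\cdot S$ to $\GG$ via the same chain of references used in Proposition~\ref{lem:repr_1}, apply It\^o's formula to $1/q^L$, combine the jump correction and continuous bracket into $\frac{1}{q^L(q^L_-)^2}\cdot[q^L,q^L]$, and conclude by associativity after substituting $\ud q^L=H^L\,\ud S$ and $[q^L,q^L]=H^L\cdot[S,q^L]$. The only addition on your side is the explicit localization along $\zeta^L_n$ to justify integrability of $\frac{H^L}{(q^L_-)^2}$ and $\frac{1}{q^L}$, which the paper leaves implicit.
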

\begin{proof}
Recall first that $1/q^L_t$ is well-defined, for all $t\in\R_+$, by \cite[Corollary 1.11]{Jac85}. 
The same arguments used in the proof of Proposition \ref{lem:repr_1} allow to show that the stochastic integral $H^x\cdot S$ appearing in \eqref{eq:repr_q} is well-defined also in the enlarged filtration $\GG$ and admits a version which is measurable in $x$. Moreover, it holds that $H^L\in L(S;\prob,\GG)$ and $H^L\cdot S=(H^x\cdot S)\bigr|_{x=L}=q^L-q^L_0$ $\prob$-a.s.
Similarly as in \cite[Lemma 5]{ACJ15}, an application of It\^o's formula together with the fact that $[q^L,q^L]=\bigl[(q^L)^c,(q^L)^c\bigr]+\sum_{0< u\leq\cdot}(\Delta q^L_u)^2$ then implies that
\[	\ba
\frac{1}{q^L}
&= \frac{1}{q^L_0} - \frac{1}{(q^L_{-})^2}\cdot q^L
+ \frac{1}{(q^L_{-})^3}\cdot\bigl[ (q^L)^c,(q^L)^c\bigr]
+ \sum_{0<u\leq\cdot}\left(\frac{1}{q^L_u}-\frac{1}{q^L_{u-}}+\frac{\Delta q^L_u}{(q^L_{u-})^2}\right)	\\
&= \frac{1}{q^L_0} - \frac{1}{(q^L_{-})^2}\cdot q^L
 + \frac{1}{q^L(q^L_{-})^2}\cdot[q^L,q^L]	\\
 &= \frac{1}{q^L_0} - \frac{1}{(q^L_{-})^2}\cdot\left(q^L-\frac{1}{q^L}\cdot[q^L,q^L]\right)	\\
 &= \frac{1}{q^L_0} - \frac{H^L}{(q^L_{-})^2}\cdot\left(S-\frac{1}{q^L}\cdot[S,q^L]\right),
\ea	\]
where the last equality follows from the associativity of the stochastic integral.
\end{proof}

\subsection{Proofs of the main results stated in Section \ref{sec:main_results}}	\label{proofs_main}

We now give the proofs of the results presented in Section \ref{sec:main_results}.
 In view of Proposition \ref{lem:repr_1} and Lemma \ref{lem:q_Aks}, we are  in a position to complete the proof of Theorem \ref{thm:main}. At this final step, the result of Proposition \ref{prop:Aks} is crucial.

\begin{proof}[Proof of Theorem \ref{thm:main}]
Let $M=(M_t)_{t\geq0}\in\M(\prob,\GG)$. By Proposition \ref{lem:repr_1}, there exists an $\Real^d$-valued $\pare{\cE\otimes\cP(\FF)}$-measurable function $(x,\omega,t)\mapsto K^x_t(\omega)$ such that \eqref{eq:repr_1} holds. 
The integration by parts formula then implies that
\[	\ba
M &=
M_0 + \frac{K^L}{q^L_-}\cdot S + \bigl(q^L_0M_0+(K^L\cdot S)_-\bigr)\cdot\frac{1}{q^L} + K^L\cdot \Bigl[S,\frac{1}{q^L}\Bigr]	\\
&= M_0 + \frac{K^L}{q^L_-}\cdot\left(S-\frac{1}{q^L}\cdot[S,q^L]\right) + \bigl(q^L_0M_0+(K^L\cdot S)_-\bigr)\cdot\frac{1}{q^L},
\ea	\]
where the second equality makes use of the fact that
\[
\Bigl[S,\frac{1}{q^L}\Bigr] = -\frac{1}{q^Lq^L_-}\cdot[S,q^L],
\]
as can be readily verified by an application of It\^o's formula.
Continuing, Lemma \ref{lem:q_Aks} and then Proposition \ref{prop:Aks} imply that
\begin{align}
M &= M_0 + \left(\frac{K^L}{q^L_-}-\bigl(q^L_0M_0+(K^L\cdot S)_-\bigr)\frac{H^L}{(q^L_-)^2}\right)\cdot\left(S-\frac{1}{q^L}\cdot[S,q^L]\right)	\notag\\
&= M_0 + \left(\frac{K^L}{q^L_-}-\bigl(q^L_0M_0+(K^L\cdot S)_-\bigr)\frac{H^L}{(q^L_-)^2}\right)\cdot S^{\GG}	\label{eq:proof_thm}\\
&\quad -  \left(\frac{K^L}{q^L_-}-\bigl(q^L_0M_0+(K^L\cdot S)_-\bigr)\frac{H^L}{(q^L_-)^2}\right)\cdot
\left(\Delta S_{\eta^x}\ind_{\dbraco{\eta^x,\infty}}\right)^{p,\FF}\bigr|_{x=L}.
\notag
\end{align}
Focusing on the last term in the above representation, it holds that
\begin{gather}
\left(\frac{K^L}{q^L_-}-\bigl(q^L_0M_0+(K^L\cdot S)_-\bigr)\frac{H^L}{(q^L_-)^2}\right)\cdot
\left(\Delta S_{\eta^x}\ind_{\dbraco{\eta^x,\infty}}\right)^{p,\FF}\bigr|_{x=L}	\notag\\
= \frac{1}{q^L_-}
\cdot\left(K^x_{\eta^x}\Delta S_{\eta^x}\ind_{\dbraco{\eta^x,\infty}}
-\bigl(q^x_0m^x_0+(K^x\cdot S)_{\eta^x-}\bigr)\frac{H^x_{\eta^x}\Delta S_{\eta^x}}{q^x_{\eta^x-}}\ind_{\dbraco{\eta^x,\infty}}\right)^{p,\FF}\biggr|_{x=L}.
\label{eq:vanish}
\end{gather}
In view of \eqref{eq:prod_repr}, it holds that $\oPP$-a.s. on the set $\{\eta^x<\infty\}$
\[
K^x_{\eta^x}\Delta S_{\eta^x}
= \Delta (q^xm^x)_{\eta^x}
= -q^x_{\eta^x-}m^x_{\eta^x-}
\]
and, recalling representation \eqref{eq:repr_q}, on the set $\{\eta^x<\infty\}$ it holds that
\[
\bigl(q^x_0m^x_0+(K^x\cdot S)_{\eta^x-}\bigr)\frac{H^x_{\eta^x}\Delta S_{\eta^x}}{q^x_{\eta^x-}}
= q^x_{\eta^x-}m^x_{\eta^x-}\frac{\Delta q^x_{\eta^x}}{q^x_{\eta^x-}}
= -q^x_{\eta^x-}m^x_{\eta^x-},
\]
thus showing that \eqref{eq:vanish} vanishes $\prob$-a.s. Together with \eqref{eq:proof_thm} and defining the $\GG$-predictable process
\be	\label{eq:integrand}
\varphi:=\frac{K^L}{q^L_-}-\bigl(q^L_0M_0+(K^L\cdot S)_-\bigr)\frac{H^L}{(q^L_-)^2},
\ee 
this shows that every martingale on $\basisgp$ can be represented in the form $M=M_0+\varphi\cdot S^{\GG}$. Since the general case follows by localization (see e.g. \cite[Lemma 13.2]{MR1219534}), this completes the proof of Theorem \ref{thm:main}.
\end{proof}

\begin{proof}[Proof of Corollary \ref{cor:no_jump}]
Note first that $\prob(\eta^x<\infty)=0$ for $\lambda$-a.e. $x\in E$ implies that the process $\Delta S_{\eta^x}\ind_{\dbraco{\eta^x,\infty}}$ appearing in the decomposition \eqref{eq:dec_Aks} is null\footnote{Since the separability of $L^1(\Omega,\A,\prob)$ is only needed to ensure the existence of an $x$-measurable version of the dual $\FF$-predictable projection of this process (see Remark \ref{rem:sep}), this explains why the separability assumption is not needed in the formulation of Corollary \ref{cor:no_jump}, Proposition \ref{prop:classic_case} and Corollary \ref{cor:no_jump_2}.}, for $\lambda$-a.e. $x\in E$.
By \cite[Corollary 1.11]{Jac85}, the process $(1/q^L_t)_{t\geq0}$ is well-defined.
The $\GG$-local martingale property of the $\Real^{d+1}$-valued process $Y:=(1/q^L,S/q^L)$ follows from \cite[Proposition 3.6]{AFK} (or also  \cite[Proposition 9]{ACJ15}).
Recalling that $[S,\frac{1}{q^L}]=-\frac{1}{q^Lq^L_-}\cdot[S,q^L]$ (see the proof of Theorem \ref{thm:main}) and using the integration by parts formula, it holds that
\begin{align}
S^{\GG} &= S - \frac{1}{q^L}\cdot[S,q^L]
= S + q^L_-\cdot\Bigl[S,\frac{1}{q^L}\Bigr]
= S + q^L_-\cdot\left(\frac{S}{q^L}-S_-\cdot\frac{1}{q^L}-\frac{1}{q^L_-}\cdot S\right)	\notag\\
&= S_0 + q^L_-\cdot \frac{S}{q^L} - (q^L_-S_-)\cdot\frac{1}{q^L}.
\label{eq:no_jump}
\end{align} 
For any $\varphi\in L(S^{\GG};\prob,\GG)$, let $\varphi^n:=\varphi\ind_{\{\|\varphi\|\leq n\}}$, for $n\in\N$. For every $n\in\N$, since $\varphi^n$ is bounded and $q^L_-$ and $S_-$ are both locally bounded, \eqref{eq:no_jump} implies that $\varphi^n\cdot S^{\GG}=\psi^n\cdot Y$, where $(\psi^n_t)_{t\geq0}$ is the $\Real^{d+1}$-valued $\GG$-predictable locally bounded process defined by $\psi^{n,1}:=-q^L_-S^{\top}_-\varphi^n$ and $\psi^{n,i+1}:=q^L_-\varphi^{n,i}$, for all $i=1,\ldots,d$. 
Since $\varphi\in L(S^{\GG};\prob,\GG)$, the stochastic integral $\varphi^n\cdot S^{\GG}$ converges to $\varphi\cdot S^{\GG}$ in Emery's topology. By \cite[Proposition III.6.26]{MR1943877}, this implies that $\psi^n\cdot Y$ also converges in Emery's topology to $\psi\cdot Y$, for some $\psi\in L(Y;\prob,\GG)$, thus showing that 
\[
\bigl\{\varphi\cdot S^{\GG} : \varphi\in L(S^{\GG};\prob,\GG)\bigr\}
\subseteq
\bigl\{\psi\cdot Y : \psi\in L(Y;\prob,\GG)\bigr\}.
\]
The claim then follows from the strong predictable representation property of $S^{\GG}$ on $\basisgp$, which holds by Theorem \ref{thm:main}.
\end{proof}

Let us now turn to the proof of Proposition \ref{prop:classic_case}. Even though Proposition \ref{prop:classic_case} can be obtained as a special case of Corollary \ref{cor:no_jump}, it turns out that it can be very quickly proved by relying on the auxiliary representation result of Proposition \ref{lem:repr_1}, as shown in the following proof.

\begin{proof}[Proof of Proposition \ref{prop:classic_case}]
If $\nu_t\sim\lambda$ holds $\prob$-a.s. for all $t\in[0,T]$, for some $T<\infty$, \cite[Theorem 3.1]{Amen} implies that $(q^L_0/q^L_t)_{t\in[0,T]}\in\M(\prob,\GG)$, so that the probability measure $\ptilde$ is well-defined and  equivalent to $\prob$, since $q^L_T>0$ $\prob$-a.s. Moreover, by \cite[Theorem 3.2]{Amen}, it holds that $\Mloc(\prob,\FF)\subseteq\Mloc(\ptilde,\GG)$, so that $S\in\Mloc(\ptilde,\GG)$. 
Let $N=(N_t)_{t\in[0,T]}\in\M(\ptilde,\GG)$. By Bayes' rule $(q^L_0N_t/q^L_t)_{t\in[0,T]}\in\M(\prob,\GG)$ and, hence, Proposition \ref{lem:repr_1} gives the existence of a $\GG$-predictable process $K^L$ such that 
\[
N_t = N_0 + \frac{1}{q^L_0}(K^L\cdot S)_t = N_0 + \left(\frac{K^L}{q^L_0}\cdot S\right)_t
\qquad\text{ $\prob$-a.s. for all $t\geq0$}.
\]
The general case follows by localization (see \cite[Lemma 13.2]{MR1219534}).
\end{proof}

We conclude with the proof of Corollary \ref{cor:no_jump_2}, which is based on Corollary \ref{cor:no_jump} together with arguments similar to those used in the proof of Proposition \ref{prop:classic_case}.

\begin{proof}[Proof of Corollary \ref{cor:no_jump_2}]
As in Corollary \ref{cor:no_jump}, the fact that $\prob(\eta^x<\infty)=0$ for $\lambda$-a.e. $x\in E$ implies that $(1/q^L_t)_{t\geq0}\in\Mloc(\prob,\GG)$. Let $\{\tau_n\}_{n\in\N}$ be a sequence of $\GG$-stopping times increasing $\prob$-a.s. to infinity such that $(1/q^L_{\tau_n\wedge t})_{t\geq0}$ is a uniformly integrable martingale on $\basisgp$, for all $n\in\N$. Similarly as in the proof of Proposition \ref{prop:classic_case}, define the measure $\ud\ptilde^n:=(q^L_0/q^L_{\tau_n})\ud\prob$ and the stopped filtration $\GG^n=(\G_{\tau_n\wedge t})_{t\geq0}$, for each $n\in\N$. Since $S/q^L\in\Mloc(\prob,\GG)$ (see Corollary \ref{cor:no_jump}), Bayes' rule implies that $(S_{\tau_n\wedge t})_{t\geq0}\in\Mloc(\ptilde^n,\GG^n)$, for all $n\in\N$. 
Let $N=(N_t)_{t\geq0}\in\M(\ptilde^n,\GG^n)$. Bayes' rule implies that $(q^L_0N_t/q^L_{\tau_n\wedge t})_{t\geq0}\in\M(\prob,\GG^n)$ and, by relying on the representation \eqref{eq:repr_1}, the process $N$ admits a representation as a stochastic integral of $S$. Hence, the stopped process $S^{\tau_n}$ has the strong predictable representation property on $(\Omega,\GG^n,\ptilde^n)$, for every $n\in\N$.
Since $S$ is a special semimartingale on $(\Omega,\GG^n,\prob)$ in view of \cite[Theorem 2.5]{Jac85} and $\ptilde^n\sim\prob$, for every $n\in\N$, \cite[Corollary 7.29]{MR542115} implies that $[S,q^L/q^L_0]^{\tau_n}$ is of locally integrable variation on $(\Omega,\GG^n,\ptilde^n)$, for every $n\in\N$. Then, as a consequence of \cite[Theorem 13.12]{MR1219534} (noting that its proof carries over to the $d$-dimensional case), the stopped process $(\bar{S}^{\GG})^{\tau_n}=S^{\tau_n}-\frac{1}{q^L_-}\cdot\langle S,q^x\rangle^{\FF,\tau_n}\bigr|_{x=L}\in\Mloc(\prob,\GG^n)$ has the strong predictable representation property on $(\Omega,\GG^n,\prob)$, for every $n\in\N$. Since the sequence $\{\tau_n\}_{n\in\N}$ increases $\prob$-a.s. to infinity, the claim then follows by \cite[Lemma 13.2]{MR1219534}.
\end{proof}

\appendix

\section{A representation result for a family of $\FF$-martingales}	\label{sec:alt}

As explained in Section \ref{outline}, we have introduced the product space $(\widehat{\Omega},\oFF,\oPP)$ in order to establish the martingale representation result of Proposition \ref{prop:mr}, which provides a stochastic integral representation holding simultaneously for a family $\{(m^x_t)_{t\geq0} : x\in E\}$ of $\FF$-martingales depending on $x$ in a measurable way, thereby ensuring good measurability properties of the integrands appearing in the representation. 

As mentioned in Remark \ref{rem:Jacod}, one can directly prove the existence of a simultaneous martingale representation result on the original space $\basisp$. This is the content of the following proposition, which allows to avoid the development of Subsection \ref{subsec2}, at the expense of relying on more sophisticated tools.
We denote by $\mu^S(\omega;dt,dy)$ the jump measure of the $\Real^d$-valued $\FF$-local martingale $S$ and by $\nu^{S,\FF}(\omega;dt,dy)$ the corresponding compensating measure in the filtration $\FF$. If $\Omega\times\Real_+\times\Real^d\ni(\omega,t,y)\mapsto W(\omega,t,y)$ is a $(\cP(\FF)\otimes\mathcal{B}_{\Real^d})$-measurable function, we denote by $W\ast(\mu^S-\nu^{S,\FF})$ the stochastic integral (when it exists) with respect to the random measure $\mu^S-\nu^{S,\FF}$, in the sense of \cite[Definition II.1.27]{MR1943877}. We denote by $\mathcal{G}_{\text{loc}}(\mu^S;\FF)$ the set of all  $\pare{\cP(\FF)\otimes\mathcal{B}_{\Real^d}}$-measurable functions $(\omega,t,y)\mapsto W(\omega,t,y)$ such that the stochastic integral $W\ast(\mu^S-\nu^{S,\FF})$ exists. We also denote by $S^c$ and $S^d$ the continuous and purely discontinuous, respectively, $\FF$-local martingale parts of $S$ (see \cite[Theorem I.4.18]{MR1943877}).

\begin{prop}	\label{prop:alt}
Suppose that $S=(S_t)_{t\geq0}$ has the strong predictable representation property on $\basisp$. Let $E\times\Omega\times\Real_+\ni(x,\omega,t)\mapsto m^x_t(\omega)$ be a $\pare{\cE\otimes\cO(\FF)}$-measurable function such that $(m^x_t)_{t\geq0}\in\M(\prob,\FF)$, for all $x\in E$. Then there exists a $\pare{\cE\otimes\cP(\FF)}$-measurable function $E\times\Omega\times\Real_+\ni(x,\omega,t)\mapsto\theta^x_t(\omega)\in\Real^d$ satisfying $\theta^x\in L_m(S;\prob,\FF)$, for all $x\in E$, such that $m^x_t(\omega)=m^x_0(\omega)+(\theta^x\cdot S)_t(\omega)$ holds $\prob$-a.s. for all $t\in\R_+$ and all $x\in E$.
\end{prop}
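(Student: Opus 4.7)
The plan is to follow the approach sketched in Remark \ref{rem:Jacod}, working directly on $\basisp$ and mimicking the reasoning of \cite[Proposition 3.14]{Jac85}. The starting point is the structural characterization of the strong predictable representation property available in \cite[Chapter IV]{MR542115}: since $S$ has the strong PRP, every $\FF$-local martingale $N$ admits a unique decomposition of the form
\[
N = N_0 + \phi \cdot S^c + W \ast (\mu^S - \nu^S),
\]
with $\phi$ an $\FF$-predictable process integrable against $S^c$ and $W \in \mathcal{G}_{\text{loc}}(\mu^S)$. Moreover, the strong PRP forces a compatibility between $\phi$ and $W$: they arise from a single $\FF$-predictable integrand $\theta$, and $N = N_0 + \theta \cdot S$. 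Applying this for each fixed $x \in E$ to $N = m^x$ yields, pointwise in $x$, an integrand $\theta^x \in L_m(S; \prob, \FF)$ with the desired representation. The content of Proposition \ref{prop:alt} thus reduces entirely to producing a version of $x \mapsto \theta^x$ that is jointly measurable in $(x, \omega, t)$.

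To establish this joint measurability, I would work with canonical measurable versions of every ingredient entering the construction of $\theta^x$. Specifically, \cite[Theorem 2.5]{Jac85} supplies a $(\cE \otimes \cP(\FF))$-measurable version of the parametrized predictable quadratic covariation $(x, \omega, t) \mapsto \langle m^{x,c}, S^c \rangle_t(\omega)$, and the jump field $(x, \omega, t) \mapsto \Delta m^x_t(\omega)$ is automatically $(\cE \otimes \cO(\FF))$-measurable since $(x, \omega, t) \mapsto m^x_t(\omega)$ is. The continuous-part integrand is then obtained as a Radon-Nikodym density of $\langle m^{x,c}, S^c \rangle$ with respect to $\langle S^c, S^c \rangle$, chosen measurably in $x$ via \cite[Lemma 3]{SY} combined with a standard measurable selection argument; the jump-part integrand $W^x$ is read off from the joint jumps of $m^x$ and $S$ through the random measure $\mu^S$. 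The PRP compatibility then forces $\phi^x$ and $W^x$ to be synthesized into a single $(\cE \otimes \cP(\FF))$-measurable process $\theta^x$, and a pointwise-in-$x$ verification yields $\theta^x \in L_m(S; \prob, \FF)$ together with $m^x = m^x_0 + \theta^x \cdot S$ $\prob$-a.s.

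The main obstacle is the measurable-in-$x$ selection of the Radon-Nikodym derivative defining $\theta^x$. For each fixed $x$, the integrand $\theta^x$ is determined only up to a $d[S,S]\, d\prob$-null set, so arbitrary pointwise-in-$x$ choices need not produce a jointly measurable object. The resolution, as in \cite{Jac85}, is to avoid such choices altogether by expressing $\theta^x$ through canonical (essentially Lebesgue-type) densities, and to invoke the machinery of stochastic integration with respect to random measures \cite[Chapter II]{MR1943877} with the parameter $x$ carried through. A subsidiary point is that one must verify $\theta^x \in L_m(S; \prob, \FF)$ for \emph{every} $x \in E$ (not merely $\lambda$-a.e.~$x$); this is automatic, however, from the hypothesis that each $m^x$ is a genuine $\FF$-martingale, so that $\theta^x \cdot S = m^x - m^x_0$ inherits the martingale property directly.
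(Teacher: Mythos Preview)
Your overall architecture matches the paper's: obtain a weak representation $m^x = m^x_0 + h^x\cdot S^c + W^x\ast(\mu^S-\nu^S)$ with $h^x$ and $W^x$ chosen measurably in $x$, then upgrade it to a strong representation $m^x = m^x_0 + \theta^x\cdot S$ with $\theta^x$ still measurable in $x$. The continuous-part integrand $h^x$ is indeed obtained from a measurable version of $\langle m^{x,c},S^c\rangle$, and $W^x$ from the jumps, exactly as you indicate.

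The genuine gap is the sentence ``the PRP compatibility then forces $\phi^x$ and $W^x$ to be synthesized into a single $(\cE\otimes\cP(\FF))$-measurable process $\theta^x$.'' This is the entire difficulty, and you have not supplied a mechanism for it. Knowing, for each fixed $x$, that $W^x\ast(\mu^S-\nu^S)$ equals some $H^x\cdot S^d$ does not by itself give a jointly measurable choice of $H^x$; the passage from a random-measure integrand $W^x(\omega,t,y)$ to an $\Real^d$-valued predictable integrand $H^x(\omega,t)$ is not a Radon--Nikodym differentiation and is not covered by the Lebesgue-type density arguments you invoke. The paper closes this gap via a structural fact about $S$ that is specific to the \emph{strong} PRP: by \cite[Theorem 4.80]{MR542115}, there exist $\Real^d$-valued $\FF$-predictable processes $\alpha_1,\ldots,\alpha_{d+1}$, not depending on $x$, such that $\Delta S_t\in\{\alpha_1(t),\ldots,\alpha_{d+1}(t)\}$ up to evanescence (the ``finite predictable constraint''). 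One then sets $V^{x,i}_t:=W^x(t,\alpha_i(t))$, which is $(\cE\otimes\cP(\FF))$-measurable, and solves the linear system $V^{x,i}-Z^x=\sum_j H^{x,j}\alpha_i^j$ for $H^x$; since the coefficients $\alpha_i$ are $x$-free and the data $V^{x,i},Z^x$ are measurable in $x$, the unique solution $H^x$ is $(\cE\otimes\cP(\FF))$-measurable. Finally, $S^c$ and $S^d$ are themselves stochastic integrals of $S$ (via \cite[Theorems 4.73 and 4.82]{MR542115}), say $S^c=\psi^c\cdot S$ and $S^d=\psi^d\cdot S$, and one sets $\theta^x:=(\psi^c)^{\top}h^x+(\psi^d)^{\top}H^x$. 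Without the finite predictable constraint (or an equivalent device), your ``synthesis'' step has no content.
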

\begin{proof}
Since the $\FF$-local martingale $S$ has the strong predictable representation property on $\basisp$ and the strong predictable representation property implies the weak predictable representation property (see e.g. \cite[Theorem 13.14]{MR1219534}), it holds that
\be	\label{eq:jac_1}
m^x = m^x_0 + h^x\cdot S^c + W^x\ast(\mu^S-\nu^{S,\FF}),
\qquad\text{ for every }x\in E,
\ee
where $(h^x_t)_{t\geq0}\in L_m(S^c;\prob,\FF)$ and $W^x\in\mathcal{G}_{\text{loc}}(\mu^S;\FF)$, for every $x\in E$.
More precisely, the process $(h^x_t)_{t\geq0}$ can be chosen as any $\FF$-predictable process such that 
\be	\label{eq:qv}
[m^x,S^c] =  [(m^x)^c,S^c] = h^x\cdot [S^c,S^c], 
\qquad \text{ for every $x\in E$.}
\ee 
Since $[m^x,S^c]$ is measurable in $x$ (see \cite[Proposition 2]{SY}), one can find a $(\cE\otimes\cP(\FF))$-measurable function $(x,\omega,t)\mapsto h^x_t(\omega)\in\Real^d$ which satisfies \eqref{eq:qv}. Moreover, by following the construction in part b of \cite[proof of Proposition 3.14]{Jac85}, one can also find a $(\cE\otimes\cP(\FF)\otimes\mathcal{B}_{\Real^d})$-measurable  function $(x,\omega,t,y)\mapsto W^x(\omega,t,y)$ such that $\Delta m^x_t(\omega)=W^x(\omega,t,\Delta S_t(\omega))\ind_{\{\Delta S_t(\omega)\neq0\}}$ up to an evanescent set, for every $x\in E$.
Since $S$ has the strong predictable representation property on $\basisp$, \cite[Theorem 4.80]{MR542115} implies that there exists a family $\{(\alpha_i(t,\cdot))_{t\geq0} : i=1,\ldots,d+1\}$ of $\Real^d$-valued $\FF$-predictable processes such that the set $B(t,\omega):=\{\alpha_i(t,\omega) : i=1,\ldots,d+1\}$ is composed of linearly independent points and satisfies $\Delta S_t(\omega)\in B(t,\omega)$ (up to an evanescent set).
For each $x\in E$, define then the $\R^{d+1}$-valued $\FF$-predictable process $(V^x_t)_{t\geq0}$ by
\[
V^{x,i}_t(\omega) := W^x\bigl(\omega,t,\alpha_i(t,\omega)\bigr),
\qquad\text{ for }i=1,\ldots,d+1\text{ and }t\in\R_+.
\]
Observe that the process $(V^x_t)_{t\geq0}$ is measurable in $x$.
Following the construction in the proof of the implication (b)$\Rightarrow$(a) of \cite[Theorem 4.80]{MR542115}, the purely discontinuous $\FF$-local martingale $W^x\ast(\mu^S-\nu^{S,\FF})$ can be represented as a stochastic integral of the form $H^x\cdot S^d$ if there exists an $\FF$-predictable solution $(H^x_t)_{t\geq0}$ to the linear system
\be	\label{eq:system}
V^{x,i} - Z^x = \sum_{j=1}^dH^{x,j}\alpha_i^j,
\qquad\text{ for }i=1,\ldots,d+1,
\ee
where, following the notation of \cite{MR542115}, $(Z^x_t)_{t\geq0}$ is an $\FF$-predictable process which depends on $x$ in a measurable way. As argued in \cite{MR542115}, system \eqref{eq:system} admits a unique solution $(H^x_t)_{t\geq0}$ and the map $(x,\omega,t)\mapsto H^x_t(\omega)$ is $(\cE\otimes\cP(\FF))$-measurable, since all the processes appearing in \eqref{eq:system} are $(\cE\otimes\cP(\FF))$-measurable.
We have thus established the representation 
\be	\label{eq:sim_repr}
m^x=m^x_0+h^x\cdot S^c+H^x\cdot S^d,
\qquad\text{ for all }x\in E,
\ee 
where both integrands depend in a measurable way on $x$. 
Finally, since $S$ has the strong predictable representation property on $\basisp$, \cite[Theorem 4.82]{MR542115} together with \cite[Theorem 4.73]{MR542115} implies that $S^{c,i}\in\{\varphi\cdot S : \varphi\in L_m(S;\prob,\FF)\}$ and $S^{d,i}\in\{\varphi\cdot S : \varphi\in L_m(S;\prob,\FF)\}$, for all $i=1,\ldots,d$. Hence, there exist two $\Real^{d\times d}$-valued $\FF$-predictable processes $(\psi^c_t)_{t\geq0}$ and $(\psi^d_t)_{t\geq0}$ such that the row vectors $(\psi^{c,i\cdot}_t)_{t\geq0}$ and $(\psi^{d,i\cdot}_t)_{t\geq0}$ belong to $L_m(S;\prob,\FF)$, for each $i=1,\ldots,d$, and $S^c=\psi\cdot S$ and $S^d=\psi^d\cdot S$. By the associativity of the local martingale stochastic integral, the claim then follows by letting $(x,\omega,t)\mapsto\theta^x_t(\omega):=\psi^c_t(\omega)^{\top}h^x_t(\omega)+\psi^d_t(\omega)^{\top}H^x_t(\omega)$, which is $(\cE\otimes\cP(\FF))$-measurable.
\end{proof}

\begin{rem}
In the above proof, a key ingredient is represented by the family of $\Real^d$-valued processes $\{(\alpha_i(t,\cdot)_{t\geq0} : i=1,\ldots,d+1)\}$ such that $\Delta S_t\in\{\alpha_i(t,\cdot) : i=1,\ldots,d+1\}$. In the recent paper \cite{Song2015}, this property has been called \emph{finite $\FF$-predictable constraint} condition. In the proof of Proposition \ref{prop:alt}, the construction based on \cite[Theorem 4.80]{MR542115} can be replaced by a direct application of \cite[Theorem 3.5]{Song2015}, using the fact that each $\FF$-local martingale $X_k$ (in the notation of \cite{Song2015}) can be written as a stochastic integral of $S$ as long as $S$ has the strong predictable representation property on $\basisp$.
\end{rem}

\bibliographystyle{alpha}
\bibliography{init_filtration_biblio2}
\end{document}